\newtheorem{nnassumption}{\bf Assumption}
\newtheorem{nntheorem}{\bf Theorem}
\newenvironment{theorem}{\begin{nntheorem}\it}{\end{nntheorem}}
\newtheorem{nncorollary}{\bf Corollary}
\newtheorem{nndefinition}{\bf Definition}
\newtheorem{nnproposition}{\bf Proposition}
\newenvironment{proposition}{\begin{nnproposition}\it}{\end{nnproposition}}
\newtheorem{nnproblem}{\bf Problem}
\newtheorem{nnlemma}{\bf Lemma}
\newenvironment{lemma}{\begin{nnlemma}\it}{\end{nnlemma}}
\newtheorem{nnremark}{\bf Remark}
\newenvironment{remark}{\begin{nnremark} \rm }{\hfill \hspace*{1pt}\hfill $\circ$\end{nnremark}}
\newtheorem{nnexample}{\bf Example}
\renewcommand{\leq}{\leqslant}
\renewcommand{\geq}{\geqslant}
\begin{document}
\title{Controllability and Stabilization of a Wave-Heat Cascade System}
\author{Hugo Lhachemi, Christophe Prieur, and Emmanuel Tr{\'e}lat
\thanks{Hugo Lhachemi is with Universit{\'e} Paris-Saclay, CNRS, CentraleSup{\'e}lec, Laboratoire des signaux et syst\`emes, 91190, Gif-sur-Yvette, France (email: hugo.lhachemi@centralesupelec.fr).}
\thanks{Christophe Prieur is with Universit\'e Grenoble Alpes, CNRS, Gipsa-lab, 38000 Grenoble, France (e-mail: christophe.prieur@gipsa-lab.fr).}
\thanks{Emmanuel Trélat is with Sorbonne Universit\'e, Universit\'e Paris Cit\'e, CNRS, Inria, Laboratoire Jacques-Louis Lions, LJLL, F-75005 Paris, France (e-mail: emmanuel.trelat@sorbonne-universite.fr).}}

\date{}

\maketitle

\begin{abstract}
Considering a wave-reaction-diffusion PDE cascade system with wave Neumann control, we first establish controllability properties in a suitable Hilbert space depending on the coupling cascade term. This is done by deriving an observability inequality for the dual problem by resorting to an Ingham-M{\"u}ntz inequality. Second, we design an explicit output feedback control strategy for the actual stabilization of the PDE cascade. The key property is that the underlying operator is a Riesz-spectral operator. 
\end{abstract}

\medskip
\noindent{\bf Keywords:}
Exact controllability, observability inequality, Ingham-M{\"u}ntz inequality, output feedback, PDE cascade, reaction-diffusion equation, wave equation.

\section*{Introduction}\label{sec:introduction}
\noindent
Let $L>0$, $c\in\mathbb{R}$, and $\beta\in L^\infty(0,L)$ be arbitrary. We consider the control system
\begin{subequations}\label{eq: cascade equation}
    \begin{align}
        &\partial_t y(t,x) = \partial_{xx} y(t,x) + c y(t,x) + \beta(x) z(t,x) , \label{eq: cascade equation - 1} \\
        &\partial_{tt} z(t,x) = \partial_{xx} z(t,x) , \label{eq: cascade equation - 2} \\
        & y(t,0) = y(t,L) = 0 , \label{eq: cascade equation - 3} \\ 
        & z(t,0) = 0 , \ \ \partial_x z(t,L) = u(t) , \label{eq: cascade equation - 4} 
    \end{align}
\end{subequations}
for $t > 0$ and $x \in (0,L)$, where the state $y(t,\cdot):[0,L]\rightarrow\mathbb{R}$ evolves according to the 1D reaction-diffusion equation \eqref{eq: cascade equation - 1}, with a coupling term $\beta(x) z(t,x)$ acting as a source, and the state $z(t,\cdot):[0,L]\rightarrow\mathbb{R}$ is solution of the 1D wave equation \eqref{eq: cascade equation - 2}. The control input $u(t)$ acts on the right Neumann trace of the solution of $z(t,\cdot)$. This kind of wave-reaction-diffusion PDE cascade is encountered, for example, in a simplified model of control of microwave heating \cite{hill1996modelling, wei2012optimal, zhong2014state, celuch2009modeling}. 

Up to our knowledge, the state-of-the-art regarding controllability properties for coupled PDEs of different natures (like here, heat and wave) remains quite limited. 
This is in contrast with parabolic coupled PDEs, for which we refer to \cite{boyer} and references therein.
The model that has much inspired the present article is the control of a heat-wave PDE with coupling at the boundary, studied in \cite{zhang2003polynomial, zhang2004polynomial}. 
Hyperbolic-elliptic couplings have been studied in \cite{rosier2013unique, chowdhury2023boundary}. Stabilization by backstepping control has been developed in \cite{chen2017backstepping, ghousein2020backstepping} for coupled hyperbolic-parabolic PDE systems.

\smallskip
The objective of this paper is twofold. 
In Section~\ref{sec: exact controllability}, we establish exact, exact null and approximate controllability properties for the control system \eqref{eq: cascade equation} in appropriate Hilbert spaces, under sharp assumptions. This is done by establishing an observability inequality for the dual problem by using an instrumental Ingham-M{\"u}ntz inequality. 
Interestingly, the resulting controllability space, which depends on the coupling function $\beta$ and is characterized in a spectral way, is not a conventional functional space. In turn, the control system \eqref{eq: cascade equation} is exponentially stabilizable in that space. 
Since the results of this first section are theoretical (the controllability and stabilization properties are neither constructive nor robust), this motivates Section~\ref{sec: control design}, in which we design a constructive, explicit output feedback control that robustly stabilizes to zero the control system \eqref{eq: cascade equation}. 
The stabilization is proved by a Lyapunov function obtained by spectral considerations, and the feedback control is built with a finite number of modes. 
Extensions to other types of measurements are discussed in Section~\ref{sec: extemsion}.

\section{Controllability and observability properties}\label{sec: exact controllability}
\noindent We define the Hilbert space of complex-valued functions
\begin{subequations}\label{eq: state-space}
    \begin{equation}
        \mathcal{H}^0 = L^2(0,L) \times H_{(0)}^1(0,L) \times L^2(0,L)
    \end{equation}
    where $H_{(0)}^1(0,L) = \{ g \in H^1(0,L)\, \mid\, g(0) = 0 \}$, with the inner product
    \begin{equation}
	\left\langle (f_1,g_1,h_1),(f_2,g_2,h_2)\right\rangle_{\mathcal{H}^0}
        = \int_0^L (f_1 \overline{f_2} + g'_1 \overline{g'_2} + h_1 \overline{h_2} )
    \end{equation}
\end{subequations}
and corresponding norm denoted $\Vert \cdot \Vert_{\mathcal{H}^0}$. When the context is clear, we simply denote by $\langle \cdot , \cdot \rangle$ the inner product of $\mathcal{H}_0$. Setting $\mathcal{X}=(y,z,\partial_t z)^\top$, the control system \eqref{eq: cascade equation} can be written in the form $\dot{\mathcal{X}}(t)=\mathcal{A}_0 \mathcal{X}(t)+\mathcal{B}_0 u(t)$, where $\mathcal{A}_0:D(\mathcal{A}_0)\rightarrow\mathcal{H}^0$ is defined by
\begin{subequations}\label{def_A0}
\begin{equation}
    \mathcal{A}_0 = \begin{pmatrix} \partial_{xx} + c\,\mathrm{id} & \beta\,\mathrm{id} & 0 \\ 0 & 0 & \mathrm{id} \\ 0 & \partial_{xx} & 0 \end{pmatrix}
\end{equation}
with domain
\begin{multline}\label{def_DA0}
    D(\mathcal{A}_0) = \{  (f,g,h)\in H^2(0,L) \times H^2(0,L) \times H^1(0,L)\, \mid\, 
     f(0)=f(L)=g(0)=g'(L)=h(0)=0 \} ,
\end{multline}
\end{subequations}
and the control operator $\mathcal{B}_0$, standing for the right Neumann boundary control \eqref{eq: cascade equation - 4}, can be defined by transposition (see Appendix \ref{sec_app_A} for details). 

It is a nice fact that $\mathcal{A}_0$ is a Riesz operator, as established below, thus allowing us to perform a spectral analysis of the controllability property for \eqref{eq: cascade equation}.

\subsection{Spectral properties of $\mathcal{A}_0$ and of its adjoint}

\begin{lemma}\label{lem: eigenstructures of A0}
    The eigenvalues of $\mathcal{A}_0$ are 
    \begin{align*}
        \lambda_{1,n} = c - \frac{n^2 \pi^2}{L^2}, \; n \in\mathbb{N}^* ,
        \quad \lambda_{2,m} = \frac{(2m+1)i\pi}{2L} , \; m \in\mathbb{Z},
    \end{align*}
    with associated eigenvectors $\phi_{1,n}=(\phi^1_{1,n},\phi^2_{1,n},\phi^3_{1,n})$ and $\phi_{2,m}=(\phi^1_{2,m},\phi^2_{2,m},\phi^3_{2,m})$ respectively given by
    \begin{align}\label{lem: eigenstructures of A0 - parabolic part}
        \phi^1_{1,n}(x) = \sqrt{\frac{2}{L}} \sin\left(\frac{n\pi}{L}x\right), \ \ \phi^2_{1,n}(x) = \phi^3_{1,n}(x) = 0 ,
    \end{align}
    and
    \begin{subequations}\label{lem: eigenstructures of A0 - hyperbolic part}
        \begin{align}\label{lem: eigenstructures of A0 - hyperbolic part - 1}
            & \phi^1_{2,m}(x) = \frac{1}{A_m r_m} \int_x^L \!\! \beta(s) \sinh(\lambda_{2,m} s) \sinh(r_m (x-s)) \,\mathrm{d}s \nonumber \\
            & \! + \! \frac{\sinh(r_m (L-x))}{A_m r_m \sinh(r_m L)} \! \int_0^L \!\!\! \beta(s) \sinh(\lambda_{2,m} s) \sinh(r_m s) \,\mathrm{d}s 
        \end{align}
where $r_m$ is a square root of $\lambda_{2,m} - c$ with nonnegative real part,
        \begin{align}\label{lem: eigenstructures of A0 - hyperbolic part - 2}
            \phi^2_{2,m}(x) = \frac{1}{A_m}\sinh(\lambda_{2,m} x) , \ \ \phi^3_{2,m}(x) = \frac{\lambda_{2,m}}{A_m} \sinh(\lambda_{2,m} x) 
        \end{align}
    \end{subequations}
    with $A_m = \vert \lambda_{2,m} \vert \sqrt{L} =  \frac{\vert 2m+1 \vert \pi}{2\sqrt{L}}$.
\end{lemma}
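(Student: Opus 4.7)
My plan is to solve the eigenvalue equation $\mathcal{A}_0 (f,g,h)^\top = \lambda (f,g,h)^\top$ directly. In view of~\eqref{def_A0} this amounts to the three scalar conditions $f'' + cf + \beta g = \lambda f$, $h = \lambda g$, $g'' = \lambda^2 g$ on $(0,L)$, subject to the boundary conditions coming from $D(\mathcal{A}_0)$, namely $f(0)=f(L)=0$, $g(0)=g'(L)=0$, and $h(0)=0$. The key structural observation is that the last two scalar equations decouple from $f$: one first solves a self-contained boundary-value problem for $(g,h)$, and then treats $\beta g$ as a source in a Sturm-Liouville inversion for $f$. A natural case split is then $g\equiv 0$ versus $g\not\equiv 0$.

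If $g\equiv 0$, then automatically $h = \lambda g = 0$ and the first equation reduces to the Dirichlet problem $f'' = (\lambda-c)f$ with $f(0)=f(L)=0$, whose standard spectrum produces $\lambda_{1,n} = c - n^2\pi^2/L^2$ and the $L^2$-normalized sine eigenfunctions of~\eqref{lem: eigenstructures of A0 - parabolic part}. If $g\not\equiv 0$, then first $\lambda\neq 0$ (otherwise $g$ would be affine and the boundary conditions would force $g\equiv 0$); the compatible solution of $g''=\lambda^2 g$ with $g(0)=0$ is $g(x) = \alpha\sinh(\lambda x)$, and $g'(L)=0$ then reduces to the transcendental equation $\cosh(\lambda L) = 0$, whose roots are exactly $\lambda_{2,m} = (2m+1)i\pi/(2L)$, $m\in\mathbb{Z}$. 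In particular, there is no real eigenvalue in this case, so the two families are disjoint and the list is complete. The constant $A_m$ is pinned down by requiring that $(g,h)=(\phi^2_{2,m},\phi^3_{2,m})$ have unit norm in $H^1_{(0)}(0,L)\times L^2(0,L)$; using $\cosh(\lambda_{2,m} x) = \cos\bigl((2m+1)\pi x/(2L)\bigr)$ and $\sin((2m+1)\pi) = 0$, this is a one-line computation that yields $A_m = |\lambda_{2,m}|\sqrt{L}$.

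It remains to solve $f'' - r_m^2 f = -\beta \phi^2_{2,m}$ on $(0,L)$ with $f(0)=f(L)=0$, where $r_m$ is a square root of $\lambda_{2,m}-c$ with nonnegative real part. The main subtle point here is nondegeneracy: since $c\in\mathbb{R}$ and $\lambda_{2,m}\in i\mathbb{R}\setminus\{0\}$, the number $r_m^2$ is not a nonnegative real, hence $r_m$ has \emph{strictly} positive real part and in particular $\sinh(r_m L)\neq 0$. Then by direct differentiation under the integral sign one checks that
\[ F(x) := \tfrac{1}{A_m r_m} \int_x^L \beta(s)\sinh(\lambda_{2,m} s)\sinh(r_m(x-s))\,\mathrm{d}s \]
satisfies $F'' - r_m^2 F = -\beta\phi^2_{2,m}$ together with $F(L) = 0$; one then adds the homogeneous correction $\alpha_m \sinh(r_m(L-x))$, which preserves the boundary value at $L$ and vanishes nowhere needed, and fixes $\alpha_m$ by imposing $F(0) + \alpha_m\sinh(r_m L) = 0$. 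Solving for $\alpha_m$ yields exactly the second summand in~\eqref{lem: eigenstructures of A0 - hyperbolic part - 1}, completing the identification of $\phi^1_{2,m}$.

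I expect the only real obstacle to be the branch/nondegeneracy analysis for $r_m$ (choosing the root with nonnegative real part and verifying $\sinh(r_m L)\neq 0$ from the fact that $c$ is real and $\lambda_{2,m}$ is purely imaginary); everything else is a routine ODE computation combining the Dirichlet spectrum, variation of parameters, and an $L^2$/$H^1$ normalization.
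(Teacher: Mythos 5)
Your proposal is correct and follows essentially the same route as the paper's proof: solve the decoupled boundary-value problem for $(g,h)$ to split into the parabolic family ($g\equiv 0$, Dirichlet spectrum for $f$) and the hyperbolic family ($\cosh(\lambda L)=0$), then recover $\phi^1_{2,m}$ by variation of parameters after checking $\sinh(r_m L)\neq 0$. The only cosmetic differences are that you organize the case split on $g$ rather than on $\lambda$ and that you explicitly justify the value of $A_m$ as a unit-norm normalization in $H^1_{(0)}\times L^2$, which the paper leaves implicit.
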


\smallskip

\begin{proof}
Let $\lambda\in\mathbb{C}$ and $(f,g,h)\in D(\mathcal{A}_0)$ be such that $\mathcal{A}_0(f,g,h)=\lambda (f,g,h)$, i.e., $f,g \in H^2(0,L)$ and $h\in H^1(0,L)$ such that
    \begin{subequations}\label{eq: lemma eigenstructures A0 - system to solve}
        \begin{align}
            & f''+cf+\beta g = \lambda f , \label{eq: lemma eigenstructures A0 - system to solve - 1} \\
            & h = \lambda g , \label{eq: lemma eigenstructures A0 - system to solve - 2} \\
            & g'' = \lambda h = \lambda^2 g , \label{eq: lemma eigenstructures A0 - system to solve - 3} \\
            & f(0) = f(L) = g(0) = g'(L) = h(0) = 0 . \label{eq: lemma eigenstructures A0 - system to solve - 4}
        \end{align}
    \end{subequations}
Assume first that $\lambda = 0$. By \eqref{eq: lemma eigenstructures A0 - system to solve - 3} we have $g''=0$ with $g(0) = g'(L) = 0$, implying along with \eqref{eq: lemma eigenstructures A0 - system to solve - 2} that $g=h=0$. Hence, owing to \eqref{eq: lemma eigenstructures A0 - system to solve - 1}, $f''+cf=0$ and $f(0)=f(L)=0$, from which we deduce that $\frac{n^2 \pi^2}{L^2} = c$ and $f(x) = \sqrt{\frac{2}{L}}\sin\left(\frac{n\pi}{L}x\right)$ for some $n \in\mathbb{N}^*$ (positive integer). In particular, $\lambda = c - \frac{n^2\pi^2}{L^2} = 0$.

Assume now that $\lambda \neq 0$. Using \eqref{eq: lemma eigenstructures A0 - system to solve - 3} and the fact that $g(0)=0$, we have $g(x) = \delta ( e^{\lambda x} - e^{-\lambda x})$ for some $\delta\in\mathbb{R}$. Furthermore, we have $0 = g'(L) = \lambda \delta ( e^{\lambda L} + e^{-\lambda L} )$. Since $\lambda \neq 0$, there are two cases. If $\delta = 0$ then, by \eqref{eq: lemma eigenstructures A0 - system to solve - 2}, $g=h=0$, and by \eqref{eq: lemma eigenstructures A0 - system to solve - 1}, $f''=(\lambda-c)f$ and $f(0)=f(L)=0$ hence $\lambda = c - \frac{n^2\pi^2}{L^2}$ and $f(x) = \sqrt{\frac{2}{L}}\sin\left(\frac{n\pi}{L}x\right)$ for some $n\in\mathbb{N}^*$. If $\delta \neq 0$ then $e^{2\lambda L} = -1$, hence $\lambda = \frac{(2m+1)i\pi}{2L}$ for some $m \in\mathbb{Z}$. Taking without loss of generality $\delta=1/2$, we infer that $g(x) = \sinh(\lambda x)$ and, by \eqref{eq: lemma eigenstructures A0 - system to solve - 2}, $h(x) = \lambda \sinh(\lambda x)$. 

Finally, by \eqref{eq: lemma eigenstructures A0 - system to solve - 1}, we have $f''+(c-\lambda)f=-\beta g$ and $f(0)=f(L)=0$. Noting that $\lambda-c \neq 0$, we denote by $r$ one of its two distinct square roots, i.e., $r^2 = \lambda - c$ with $r \neq 0$. We obtain
$
        f(x) =  \left( \delta_1 - \frac{1}{2r} \int_L^x \beta(s) g(s) e^{-r s} \,\mathrm{d}s \right) e^{r x} 
         + \left( \delta_2 + \frac{1}{2r} \int_L^x \beta(s) g(s) e^{r s} \,\mathrm{d}s \right) e^{-r x}$
for some constants $\delta_1,\delta_2\in\mathbb{C}$ that must be selected so that $f(0)=f(L)=0$. The latter equation yields $\delta_2 = - \delta_1 e^{2rL}$ , implying that
$f(x) =  2 \delta_1 e^{rL} \sinh(r(x-L))  - \frac{1}{r} \int_L^x \beta(s) g(s) \sinh(r(x-s)) \,\mathrm{d}s $.
Then, $f(0)=0$ yields $-2 \delta_1 e^{rL} \sinh(rL) - \frac{1}{r} \int_0^L \beta(s) g(s) \sinh(rs) \,\mathrm{d}s = 0$. We note that $\sinh(rL)=0$ if and only if $e^{2rL}=1$, i.e., if and only if $2rL=2ik\pi$ for some $k\in\mathbb{Z}$. Then, we must have $\lambda = c + r^2 = c - \frac{k^2 \pi^2}{L^2} \in\mathbb{R}$. This is in contradiction with the previously established result imposing $\lambda = \frac{(2m+1)i\pi}{2L} \in i\mathbb{R}\backslash\{0\}$. Hence, $\sinh(rL) \neq 0$ which allows to deduce that $\delta_1 = - \frac{1}{2 e^{rL} r \sinh(r L)} \int_0^L \beta(s) g(s) \sinh(rs) \,\mathrm{d}s$. With the ``scaling'' factor $A_m$, this gives the claimed result.
\end{proof}

\smallskip

\begin{remark}\label{rem_spectrum}
The sequence of eigenvalues $(\lambda_{1,n})_{n\in\mathbb{N}^*}$ (resp., $(\lambda_{1,m})_{m\in\mathbb{Z}}$) is associated with the reaction-diffusion part \eqref{eq: cascade equation - 1} (resp., the wave part \eqref{eq: cascade equation - 2}) of the system \eqref{eq: cascade equation} and is referred to as the \emph{parabolic spectrum} (resp., the \emph{hyperbolic spectrum}). 
%
In what follows, we are going to apply the Ingham-M{\"u}ntz inequality, recalled in Appendix \ref{sec_IM}, so beforehand we need to establish the Riesz basis property.
%
\end{remark}


\smallskip

\begin{lemma}\label{lem: A0 riesz spectral}
    $\Phi = \{\phi_{1,n}\, \mid\, n\in\mathbb{N}^* \} \cup \{\phi_{2,m}\, \mid\, m \in\mathbb{Z} \}$ is a Riesz basis of $\mathcal{H}^0$, of dual Riesz basis $\Psi = \{\psi_{1,n}\, \mid\, n\in\mathbb{N}^* \} \cup \{\psi_{2,m}\, \mid\, m \in\mathbb{Z} \}$. Hence, $\mathcal{A}_0$ is a Riesz spectral operator that generates the $C_0$-semigroup $T_0(t)$ given by
\begin{equation} \label{eq: T0 semigroup}
T_0(t)\mathcal{X} \!=\! \sum_{n\in\mathbb{N}^*} \! e^{\lambda_{1,n}t} \langle \mathcal{X} , \psi_{1,n} \rangle \phi_{1,n}  
         \! +\! \sum_{m \in\mathbb{Z}} \! e^{\lambda_{2,m}t} \langle \mathcal{X} , \psi_{2,m} \rangle \phi_{2,m}
\end{equation}
for every $\mathcal{X}\in\mathcal{H}^0$.
\end{lemma}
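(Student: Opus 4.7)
The plan is to apply a Bari-type stability theorem for Riesz bases: a sequence that is quadratically close to an orthonormal basis and is complete is itself a Riesz basis (the map sending the ONB to the perturbed family is a Hilbert--Schmidt perturbation of the identity, hence Fredholm of index zero, and completeness upgrades dense range to surjectivity). I would compare $\Phi$ to the reference family
\[
\Phi_0 = \{(\phi^1_{1,n},0,0)\}_{n\geq 1}\cup\{(0,\phi^2_{2,m},\phi^3_{2,m})\}_{m\in\mathbb{Z}},
\]
obtained by formally switching off the coupling $\beta$. Using $\sinh(i\theta)=i\sin\theta$, $\cosh(i\theta)=\cos\theta$, and writing $k_m=(2m+1)\pi/(2L)$, the second block reduces to $(\phi^2_{2,m})'(x)=\frac{ik_m}{A_m}\cos(k_m x)$, $\phi^3_{2,m}(x)=-\frac{k_m}{A_m}\sin(k_m x)$, from which $\|(\phi^2_{2,m},\phi^3_{2,m})\|^2_{H^1_{(0)}\times L^2}=k_m^2 L/A_m^2=1$ and the orthogonality for $m_1\neq m_2$ collapses to $\int_0^L\cos((k_{m_1}-k_{m_2})x)\,\mathrm{d}x=0$. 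Completeness of $\Phi_0$ in $\{0\}\times H^1_{(0)}\times L^2$ is then obtained by pairing $m\leftrightarrow -m-1$ (for which $k_{-m-1}=-k_m$) and invoking the standard Neumann--Dirichlet cosine and Dirichlet--Neumann sine bases of $L^2(0,L)$; combined with the Dirichlet sines in the first slot, $\Phi_0$ is an ONB of $\mathcal{H}^0$.

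The second step is to prove the quadratic closeness $\sum_m\|\phi^1_{2,m}\|_{L^2}^2<\infty$. By the proof of Lemma~\ref{lem: eigenstructures of A0}, $\phi^1_{2,m}$ solves $-f''+(\lambda_{2,m}-c)f=\beta\phi^2_{2,m}$ with homogeneous Dirichlet boundary conditions, so expanding in the Dirichlet basis $e_n(x)=\sqrt{2/L}\sin(n\pi x/L)$ yields
\[
\phi^1_{2,m}=\sum_{n\geq 1}\frac{\langle\beta\phi^2_{2,m},e_n\rangle}{n^2\pi^2/L^2+\lambda_{2,m}-c}\,e_n.
\]
Using $|n^2\pi^2/L^2+\lambda_{2,m}-c|\geq|\mathrm{Im}(\lambda_{2,m})|=k_m$ together with $\|\beta\phi^2_{2,m}\|_{L^2}\leq\|\beta\|_\infty\sqrt{L}/A_m$, Parseval's identity delivers $\|\phi^1_{2,m}\|_{L^2}^2\leq \|\beta\|_\infty^2 L/(k_m^2 A_m^2)=O(m^{-4})$, which is summable.

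Third, I would prove that $\Phi$ is total in $\mathcal{H}^0$. Let $(u,v,w)\in\mathcal{H}^0$ be orthogonal to every vector of $\Phi$. Orthogonality with $\phi_{1,n}$ for all $n\geq 1$ forces $u\perp e_n$, hence $u=0$. With $u=0$, orthogonality to $\phi_{2,m}$ reduces (after substituting the explicit expressions of $\phi^2_{2,m}$, $\phi^3_{2,m}$) to $-i\int_0^L v'\cos(k_m x)\,\mathrm{d}x=\int_0^L w\sin(k_m x)\,\mathrm{d}x$ for every $m\in\mathbb{Z}$. Combining this identity for $m$ and for $-m-1$ (using $k_{-m-1}=-k_m$) yields $\int_0^L v'\cos(k_m x)\,\mathrm{d}x=0$ and $\int_0^L w\sin(k_m x)\,\mathrm{d}x=0$ for every $m\geq 0$. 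The completeness of the Neumann--Dirichlet cosine and Dirichlet--Neumann sine systems in $L^2(0,L)$ then gives $v'=0$ and $w=0$, and $v(0)=0$ finally yields $v=0$.

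Bari's theorem then implies that $\Phi$ is a Riesz basis of $\mathcal{H}^0$, and the biorthogonal family $\Psi$ is automatically its dual Riesz basis. Since $\mathcal{A}_0$ admits $\Phi$ as a Riesz basis of eigenvectors with eigenvalues whose real parts are uniformly bounded above, $\mathcal{A}_0$ is Riesz spectral in the sense of Curtain--Zwart, generates a $C_0$-semigroup $T_0(t)$, and the explicit formula \eqref{eq: T0 semigroup} follows by Riesz expansion in $(\Phi,\Psi)$ together with the identity $T_0(t)\phi=e^{\lambda t}\phi$ on each eigenvector. The main technical obstacle is the completeness step: the hyperbolic spectrum is doubly indexed by $m\in\mathbb{Z}$ while the frequencies $|k_m|$ are naturally indexed by $\mathbb{N}$, so the symmetry $m\leftrightarrow -m-1$ has to be handled carefully both to verify the orthonormality and completeness of $\Phi_0$ and to split the coupled identity for $v'$ and $w$ into two independent scalar relations per frequency.
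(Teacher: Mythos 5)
Your proof is correct and follows the same overall strategy as the paper's (which establishes this lemma by deferring to the more general Lemma~\ref{lem: A riesz spectral}): compare $\Phi$ with the decoupled family obtained by dropping the first component of $\phi_{2,m}$, and invoke Bari's theorem, so that everything reduces to the quadratic closeness $\sum_{m}\Vert\phi^1_{2,m}\Vert_{L^2}^2<\infty$. Where you genuinely diverge is in how that estimate is obtained. The paper splits $\phi^1_{2,m}=f_{m,1}+f_{m,2}$ according to the explicit variation-of-constants formula and performs pointwise estimates on the hyperbolic functions, using $\operatorname{Re}r_m\sim\sqrt{\vert m\vert\pi/(2L)}$, to get $\Vert\phi^1_{2,m}\Vert_{L^\infty}=O(1/m^2)$. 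You instead expand $\phi^1_{2,m}$ in the Dirichlet sine basis and use the resolvent bound $\vert n^2\pi^2/L^2-c+\lambda_{2,m}\vert\geq\vert\operatorname{Im}\lambda_{2,m}\vert$ together with Parseval; this is shorter and cleaner for $\mathcal{A}_0$, whose hyperbolic eigenvalues are purely imaginary and bounded away from zero, and it yields the same $O(m^{-4})$ summand. The trade-off is that the paper's $L^\infty$ bounds on $\phi^1_{2,m}$ and $(\phi^1_{2,m})'$ are reused later (Lemma~\ref{lemma: Riesz basis H1} and Section~\ref{sec: extemsion}), whereas your $L^2$ bound would have to be upgraded there. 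Two further, harmless differences: you verify that the decoupled family is actually orthonormal (true here precisely because $\lambda_{2,m}\in i\mathbb{R}$, so $\sinh(\lambda_{2,m}x)$ reduces to $i\sin(k_mx)$), and you close Bari via completeness plus the Fredholm-index-zero argument, while the paper uses $\omega$-linear independence inherited from the block structure; both are legitimate forms of the theorem, and your completeness argument via the $m\leftrightarrow-m-1$ pairing is carried out correctly.
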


\smallskip

\begin{proof}
The proof is done in a more general setting in Lemma~\ref{lem: A riesz spectral} further, so we do not repeat it here.
The formula \eqref{eq: T0 semigroup} follows from \cite[Thm.~2.3.53]{curtain2012introduction}.
\end{proof}

\smallskip

It is well known (see \cite{lions}, \cite[Theorem 5.30]{trelat_SB}, \cite[Theorem 11.2.1]{tucsnakweiss}) that exact controllability for \eqref{eq: cascade equation} is equivalent, by duality, to observability for the dual system $\dot{\mathcal{X}}(t) = \mathcal{A}_0^* \mathcal{X}(t)$ with system output $y(t) = \partial_t z(t,L)$. We thus introduce and study the adjoint operator $\mathcal{A}_0^*$ of $\mathcal{A}_0$.

\smallskip

\begin{lemma}\label{lem: adjoint A0*}
Identifying the Hilbert space $\mathcal{H}^0$ with its dual, 
the adjoint operator $\mathcal{A}_0^*$ is given by
\begin{subequations}\label{def_A0*}
    \begin{equation}
        \mathcal{A}_0^* = \begin{pmatrix} \partial_{xx} + c\,\mathrm{id} & 0 & 0 \\ P_\beta & 0 & - \mathrm{id} \\ 0 & -\partial_{xx} & 0 \end{pmatrix}
    \end{equation}
    with $P_\beta f = \int_0^{(\cdot)} \int_\tau^L \beta(s) f(s) \,\mathrm{d}s\,\mathrm{d}\tau$, of domain
    \begin{multline}
        D(\mathcal{A}_0^*) = \{ (f,g,h)\in H^2(0,L) \times H^2(0,L) \times H^1(0,L)\, \mid\, 
         f(0)=f(L)=g(0)=g'(L)=h(0)=0 \} .
    \end{multline}
\end{subequations}
    Its eigenfunctions are given by the dual Riesz basis $\Psi = \{\psi_{1,n}\, \mid\, n\in\mathbb{N}^* \} \cup \{\psi_{2,m}\, \mid\, m \in\mathbb{Z} \}$ of $\Phi$, where, setting $\psi_{1,n}=(\psi^1_{1,n},\psi^2_{1,n},\psi^3_{1,n})$ and $\psi_{2,m}=(\psi^1_{2,m},\psi^2_{2,m},\psi^3_{2,m})$, 
    \begin{align*}
        & \psi^1_{1,n}(x) = \sqrt{\frac{2}{L}} \sin\left(\frac{n\pi}{L}x\right) , \\ 
        & \psi^2_{1,n}(x) = - \frac{\gamma_n}{\lambda_{1,n}^2 \cosh(\lambda_{1,n} L) } \sqrt{\frac{2}{L}} \cosh(\lambda_{1,n} (x-L)) \\
        & \qquad - \frac{1}{\lambda_{1,n}^2} \sqrt{\frac{2}{L}} \int_x^L \beta(s) \sin\left(\frac{n\pi}{L}s\right) \sinh(\lambda_{1,n} (x-s)) \,\mathrm{d}s   - \frac{1}{\lambda_{1,n}} \sqrt{\frac{2}{L}} \int_0^x \int_L^\tau \beta(s) \sin\left(\frac{n\pi}{L}s\right) \,\mathrm{d}s\,\mathrm{d}\tau , \\ 
        & \psi^3_{1,n}(x) = \frac{\gamma_n}{\lambda_{1,n} \cosh(\lambda_{1,n} L) } \sqrt{\frac{2}{L}} \cosh(\lambda_{1,n} (x-L))  + \frac{1}{\lambda_{1,n}} \sqrt{\frac{2}{L}} \int_x^L \beta(s) \sin\left(\frac{n\pi}{L}s\right) \sinh(\lambda_{1,n} (x-s)) \,\mathrm{d}s ,
    \end{align*}
    with 
    \begin{subequations}\label{def_gamma_n}
    \begin{equation}\label{eq: def gamma_1 - 1}
        \gamma_n = \int_0^L \beta(s) \sin\left(\frac{n\pi}{L}s\right) \sinh(\lambda_{1,n} s) \,\mathrm{d}s
    \end{equation}
    whenever $\frac{n^2 \pi^2}{L^2}\neq c$, i.e., when $\lambda_{1,n} \neq 0$, and 
    \begin{align*}
        \psi^1_{1,n}(x) = & \sqrt{\frac{2}{L}} \sin\left(\frac{n\pi}{L}x\right) , \quad
        \psi^2_{1,n}(x) =  0 , \quad
        \psi^3_{1,n}(x) =  \sqrt{\frac{2}{L}} \int_0^x \int_\tau^L \beta(s) \sin\left(\frac{n\pi}{L}s\right) \,\mathrm{d}s\,\mathrm{d}\tau 
    \end{align*}
    with 
    \begin{equation}\label{eq: def gamma_1 - 2}
        \gamma_n = \int_0^L \int_\tau^L \beta(s) \sin\left(\frac{n\pi}{L}s\right) \,\mathrm{d}s\,\mathrm{d}\tau
    \end{equation}
    \end{subequations}
    whenever $\frac{n^2 \pi^2}{L^2} = c$, i.e., when $\lambda_{1,n} = 0$, and
    \begin{align*}
        & \psi^1_{2,m}(x) = 0 , \quad
        \psi^2_{2,m}(x) = \frac{A_m }{L \vert \lambda_{2,m} \vert^2} \sinh(\lambda_{2,m} x) , \quad
         \psi^3_{2,m}(x) = \frac{A_m \lambda_{2,m}}{L \vert \lambda_{2,m} \vert^2} \sinh(\lambda_{2,m} x) .
    \end{align*}
The eigenvectors satisfy $\mathcal{A}^* \psi_{1,n} = \lambda_{1,n}\psi_{1,n}$ and $\mathcal{A}^* \psi_{2,m} = \bar{\lambda}_{2,m}\psi_{2,m}$ and have been normalized so that $\langle \phi_{1,n} , \psi_{1,n} \rangle = 1$ and $\langle \phi_{2,m} , \psi_{2,m} \rangle = 1$ for all $n\in\mathbb{N}^*$ and $m\in\mathbb{Z}$.
\end{lemma}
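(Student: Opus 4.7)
The plan is to derive $\mathcal{A}_0^*$ from its defining relation via integration by parts in the $\mathcal{H}^0$ inner product, then to solve its eigenvalue problem explicitly, and finally to use the abstract fact that the Riesz basis $\Phi$ established in Lemma~\ref{lem: A0 riesz spectral} admits a unique biorthogonal dual family, which in the Riesz-spectral setting coincides with the eigenfunctions of $\mathcal{A}_0^*$ (with conjugate eigenvalues) normalized so that matching pairings equal one.

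\textbf{Step 1: form of $\mathcal{A}_0^*$.} Starting from
\[
\langle \mathcal{A}_0 (f,g,h), (\tilde f, \tilde g, \tilde h)\rangle_{\mathcal{H}^0} = \int_0^L (f''+cf+\beta g)\overline{\tilde f} + \int_0^L h'\,\overline{\tilde g'} + \int_0^L g''\,\overline{\tilde h},
\]
I integrate by parts in each piece, using $f(0)=f(L)=g(0)=g'(L)=h(0)=0$. Two integrations by parts of $f''$ force $\tilde f(0)=\tilde f(L)=0$ and produce the $(\partial_{xx}+c\,\mathrm{id})\tilde f$ contribution in the first slot; integrating $g''$ once forces $\tilde h(0)=0$ and yields $\int g'\,\overline{(-\tilde h)'}$, giving the $-\mathrm{id}$ entry; integrating $h'\overline{\tilde g'}$ once forces $\tilde g'(L)=0$ and produces $-\partial_{xx}\tilde g$ in the third slot. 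The delicate term is $\int\beta g\,\overline{\tilde f}$: setting $G(x)=\int_x^L\beta\tilde f\,ds$ (so $G(L)=0$) and integrating by parts once using $g(0)=0$ gives $\int g'\,\overline{G}$, and since $G=(P_\beta\tilde f)'$ this is exactly the $H^1_{(0)}$ pairing $\int g'\,\overline{(P_\beta\tilde f)'}$, producing $P_\beta$ as the middle-slot bottom-left entry. Collecting all terms yields the matrix and domain asserted in \eqref{def_A0*}.

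\textbf{Step 2: eigenvalue problem.} The system $\mathcal{A}_0^*(\tilde f,\tilde g,\tilde h)=\mu(\tilde f,\tilde g,\tilde h)$ decouples at the first component: $\tilde f''+c\tilde f=\mu\tilde f$ with Dirichlet data yields either $\tilde f\equiv0$, or $\mu=\lambda_{1,n}$ and $\tilde f\propto\sin(n\pi x/L)$. In the hyperbolic branch $\tilde f=0$, eliminating $\tilde h$ gives $\tilde g''=\mu^2\tilde g$ with $\tilde g(0)=0$, $\tilde g'(L)=0$; non-triviality forces $\cosh(\mu L)=0$, i.e.\ $\mu\in\{\bar\lambda_{2,m}\}$, and using $\bar\lambda_{2,m}=-\lambda_{2,m}$ together with $\sinh(-\lambda_{2,m}x)=-\sinh(\lambda_{2,m}x)$ produces (up to a scalar) the stated $\psi^2_{2,m},\psi^3_{2,m}$. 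In the parabolic branch $\mu=\lambda_{1,n}$, eliminating $\tilde h=P_\beta\tilde f-\lambda_{1,n}\tilde g$ yields the inhomogeneous BVP
\[
\tilde g''-\lambda_{1,n}^2\tilde g=-\lambda_{1,n}\,P_\beta\tilde f,\qquad \tilde g(0)=\tilde g'(L)=0,
\]
solved by variation of parameters: the $\cosh(\lambda_{1,n}(x-L))/\cosh(\lambda_{1,n}L)$ term absorbs $\tilde g'(L)=0$, the $\int_x^L\beta(s)\sin(n\pi s/L)\sinh(\lambda_{1,n}(x-s))\,ds$ term is a particular solution of the source, and enforcing $\tilde g(0)=0$ fixes $\gamma_n$ as in \eqref{eq: def gamma_1 - 1}. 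Substituting back into $\tilde h=P_\beta\tilde f-\lambda_{1,n}\tilde g$ recovers $\psi^3_{1,n}$. The degenerate case $\lambda_{1,n}=0$ is handled separately: the ODE collapses to $\tilde g''=0$ and the BCs give $\tilde g=0$, while $\tilde h=P_\beta\tilde f$ produces $\psi^3_{1,n}$, and \eqref{eq: def gamma_1 - 2} arises as the remaining normalization scalar.

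\textbf{Step 3: biorthogonality and normalization.} Since all eigenvalues are distinct (the $\lambda_{1,n}\in\mathbb{R}$ are pairwise distinct, and disjoint from the purely imaginary $\bar\lambda_{2,m}\in i\mathbb{R}\setminus\{0\}$), the Lagrange identity $\mu_k\langle\phi_k,\psi_j\rangle=\langle\mathcal{A}_0\phi_k,\psi_j\rangle=\langle\phi_k,\mathcal{A}_0^*\psi_j\rangle=\mu_j\langle\phi_k,\psi_j\rangle$ immediately gives $\langle\phi_{i,k},\psi_{j,\ell}\rangle=0$ whenever $(i,k)\neq(j,\ell)$. The prefactors in the $\psi_{i,k}$'s are then fixed by a direct computation of $\langle\phi_{i,k},\psi_{i,k}\rangle$ and forcing it to equal $1$; in the hyperbolic case this reduces to $\int_0^L\cos^2(\omega x)\,dx=\int_0^L\sin^2(\omega x)\,dx=L/2$ with $\omega=(2m+1)\pi/(2L)$, accounting for the $A_m/(L|\lambda_{2,m}|^2)$ factor. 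Because $\Phi$ is a Riesz basis, its biorthogonal family is unique, so the resulting $\Psi$ is the dual Riesz basis. The main obstacle is the explicit solution of the $\tilde g$-BVP in Step 2: the split between the boundary-layer term and the convolution particular solution must be chosen so that $\gamma_n$ has the closed form \eqref{eq: def gamma_1 - 1} and so that the formula extends consistently to $\lambda_{1,n}=0$, whereas Steps 1 and 3 are essentially bookkeeping once Step 2 is in place.
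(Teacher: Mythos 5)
Your proposal is correct and follows essentially the same route as the paper's (very terse) proof: integration by parts in the $\mathcal{H}^0$ inner product to identify $\mathcal{A}_0^*$ and its domain, then solving $\mathcal{A}_0^*\psi=\mu\psi$ by splitting into the branches $\tilde f=0$ (hyperbolic, giving $\mu=\bar\lambda_{2,m}$) and $\tilde f\neq 0$ (parabolic, giving $\mu=\lambda_{1,n}$ with the degenerate case $\lambda_{1,n}=0$ treated separately), exactly as the paper does by analogy with Lemma~\ref{lem: eigenstructures of A0}. You in fact supply more detail than the paper, in particular the identification of the $P_\beta$ entry via the $H^1_{(0)}$ pairing and the biorthogonality/normalization check, all of which is consistent with the stated formulas.
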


\smallskip

\begin{proof}
Using integrations by parts, we have $\langle \mathcal{A}_0(f_1,g_1,h_1) , (f_2,g_2,h_2) \rangle = \langle (f_1,g_1,h_1) , \mathcal{A}_0^* (f_2,g_2,h_2) \rangle$ for all $(f_1,g_1,h_1) \in D(\mathcal{A}_0)$ and $(f_2,g_2,h_2) \in D(\mathcal{A}_0^*)$.
The eigenelements of $\mathcal{A}_0^*$ are computed by solving $\mathcal{A}_0^*(f,g,h) = \lambda (f,g,h)$ for some $\lambda\in\mathbb{C}$ and $(f,g,h)\in D(\mathcal{A}_0^*)$, i.e., 
$$
        f'' + c f  = \lambda f , \qquad
        \int_0^{x} \int_\tau^L \beta(s) f(s) \,\mathrm{d}s\,\mathrm{d}\tau - h  = \lambda g , \qquad
        - g''  = \lambda h .
$$
    Proceeding as in the proof of Lemma~\ref{lem: eigenstructures of A0}, the case $f = 0$ gives $\lambda = \bar{\lambda}_{2,m} = - \lambda_{2,m}$ for some $m\in\mathbb{Z}$ with $(f,g,h) = \psi_{2,m}$ while the case $f \neq 0$ gives $\lambda = \lambda_{1,n}$ for some $n\in\mathbb{N}^*$. We do not give any further details.
\end{proof}

\smallskip

\begin{remark}\label{remgamman}
The eigenelements of $\mathcal{A}_0$ involve the coefficients $\gamma_n$ defined by \eqref{eq: def gamma_1 - 1} if $\frac{n^2 \pi^2}{L^2} \neq c$ and by \eqref{eq: def gamma_1 - 2} if $\frac{n^2 \pi^2}{L^2} = c$. As seen next, these coefficients play a key role in the controllability properties of the control system \eqref{eq: cascade equation}.
It can be noted that $\vert\gamma_n\vert\leq \frac{\mathrm{Cst}}{n^2}e^{n^2\pi^2/L}$.
\end{remark}

\subsection{Controllability}
\noindent
In this section, we establish the exact controllability property of \eqref{eq: cascade equation} in an appropriate functional space $V\subset \mathcal{H}^0$ (of dual $V'$ with respect to the pivot space $\mathcal{H}^0$), the exact null controllability in another space $V_0$ and the approximate controllability in $\mathcal{H}^0$. As alluded above, we proceed by duality, by establishing an exact observability property for the dual system $\dot{\mathcal{X}}(t) = \mathcal{A}_0^* \mathcal{X}(t)$, with observation $\mathcal{B}_0^*\mathcal{X}(t)=\mathcal{X}^3(t,L)$ (see Appendix \ref{sec_app_A}), namely, that for $T>0$ large enough there exists $C_T > 0$ such that 
\begin{equation}\label{eq: observability inequality}
    \int_0^T \vert \mathcal{X}^3(t,L) \vert^2 \,\mathrm{d}t   \geq  C_T \Vert \mathcal{X}(0,\cdot) \Vert_{V'}^2 
\end{equation}
for any solution $t\mapsto \mathcal{X}(t,\cdot)=(\mathcal{X}^1(t,\cdot),\mathcal{X}^2(t,\cdot),\mathcal{X}^3(t,\cdot))^\top$ of the dual system (see Appendix \ref{sec_app_B} for the study of the admissibility property, which is an inequality in the converse way).
Once this inequality will be proved (under appropriate assumptions), we will deduce by duality that the control system \eqref{eq: cascade equation} is exactly controllable in time $T$ in the Hilbert space $V$ (see Theorem \ref{thm: observability inequality} hereafter). The exact null controllability property will be established as well, in another Hilbert space $V_0$, by establishing (under the same assumptions) the finite-time observability inequality
\begin{equation}\label{eq: observability inequality finite time}
    \int_0^T \vert \mathcal{X}^3(t,L) \vert^2 \,\mathrm{d}t   \geq  C_T^0 \Vert \mathcal{X}(T,\cdot) \Vert_{V_0'}^2 ,
\end{equation}
Since its analysis is similar, let us focus on establishing \eqref{eq: observability inequality}.
First, we infer from \eqref{eq: T0 semigroup} that 
$\mathcal{X}(t) = T_0(t)^*\mathcal{X}(0) =  \sum_{n\in\mathbb{N}^*} e^{\lambda_{1,n}t} \langle \mathcal{X}(0) , \phi_{1,n} \rangle \psi_{1,n} 
+ \sum_{m \in\mathbb{Z}} e^{\bar\lambda_{2,m}t} \langle \mathcal{X}(0) , \phi_{2,m} \rangle \psi_{2,m} $,
hence
\begin{equation}\label{eq: observability inequality - system output}
\mathcal{X}^3(t,L) = \sum_{n\in\mathbb{N}^*} e^{\lambda_{1,n}t} \langle \mathcal{X}(0) , \phi_{1,n} \rangle \psi_{1,n}^3(L) 
+ \sum_{m \in\mathbb{Z}} e^{\bar\lambda_{2,m}t} \langle \mathcal{X}(0) , \phi_{2,m} \rangle \psi_{2,m}^3(L) . 
\end{equation}
Therefore, an obvious necessary condition for the observability inequality \eqref{eq: observability inequality} to hold is that $\psi_{1,n}^3(L) \neq 0$ and $\psi_{2,m}^3(L) \neq 0$ for all $n\in\mathbb{N}^*$ and $m \in \mathbb{Z}$. By Lemma~\ref{lem: adjoint A0*} we have $\psi_{1,n}^3(L) = \frac{\gamma_n}{\lambda_{1,n} \cosh(\lambda_{1,n} L) } \sqrt{\frac{2}{L}}$ if $\lambda_{1,n} \neq 0$ and $\psi_{1,n}^3(L) = \gamma_n \sqrt{\frac{2}{L}}$ otherwise, and $\psi_{2,m}^3(L) = \frac{A_m \lambda_{2,m}}{L\vert\lambda_{2,m}\vert^2} \sinh(\lambda_{2,m}L)$. Hence $\psi_{2,m}^3(L)$ is never zero, and $\psi_{1,n}^3(L) \neq 0$ if and only if $\gamma_n \neq 0$. We deduce by duality the following result.

\smallskip

\begin{lemma}\label{eq: necessary condition for observability inequality}
    A necessary condition for the system \eqref{eq: cascade equation} to be exactly controllable in some functional space $V$ is that $\gamma_n \neq 0$ for any $n\in\mathbb{N}^*$, where $\gamma_n$ is defined by \eqref{eq: def gamma_1 - 1} if $\frac{n^2 \pi^2}{L^2}\neq c$ and by \eqref{eq: def gamma_1 - 2} if $\frac{n^2 \pi^2}{L^2} = c$.
\end{lemma}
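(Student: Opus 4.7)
The plan is to proceed by contradiction through the duality between exact controllability of \eqref{eq: cascade equation} and the observability inequality \eqref{eq: observability inequality}, exploiting the fact that a vanishing $\gamma_{n_0}$ kills the boundary trace of the corresponding dual eigenfunction.

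Assume there exists $n_0\in\mathbb{N}^*$ with $\gamma_{n_0}=0$. By Lemma~\ref{lem: adjoint A0*} the formula for $\psi^3_{1,n_0}(L)$ (either $\frac{\gamma_{n_0}}{\lambda_{1,n_0}\cosh(\lambda_{1,n_0}L)}\sqrt{2/L}$ if $\lambda_{1,n_0}\neq 0$, or $\gamma_{n_0}\sqrt{2/L}$ if $\lambda_{1,n_0}=0$) gives $\psi^3_{1,n_0}(L)=0$. Choose as initial datum for the dual system the eigenvector itself, $\mathcal{X}(0)=\psi_{1,n_0}$, which is a nonzero element of $\mathcal{H}^0$ by Lemma~\ref{lem: A0 riesz spectral}. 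Since $\Phi$ and $\Psi$ are biorthogonal Riesz bases, $\langle \psi_{1,n_0},\phi_{1,k}\rangle=\delta_{n_0 k}$ and $\langle \psi_{1,n_0},\phi_{2,m}\rangle=0$, so the series \eqref{eq: observability inequality - system output} collapses to
\begin{equation*}
\mathcal{X}^3(t,L) = e^{\lambda_{1,n_0}t}\,\psi^3_{1,n_0}(L) = 0\qquad\forall t\geq 0.
\end{equation*}
Thus the observation map $\mathcal{X}(0)\mapsto \mathcal{X}^3(\cdot,L)\in L^2(0,T)$ has a nontrivial kernel containing $\psi_{1,n_0}$.

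Now suppose, towards a contradiction, that \eqref{eq: cascade equation} were exactly controllable in some Hilbert space $V\subset\mathcal{H}^0$ with pivot $\mathcal{H}^0$. The standard duality (\cite{lions}, \cite[Thm.~11.2.1]{tucsnakweiss}) then furnishes the observability inequality \eqref{eq: observability inequality} for every $\mathcal{X}(0)\in V'$. Since $\psi_{1,n_0}$ is an element of the Riesz basis $\Psi$ of $\mathcal{H}^0$, and any reasonable controllability space $V$ must in particular contain the spectral component $\phi_{1,n_0}$ (equivalently, $V'$ contains $\psi_{1,n_0}$ with nonzero norm), applying \eqref{eq: observability inequality} to $\mathcal{X}(0)=\psi_{1,n_0}$ gives $0\geq C_T\,\|\psi_{1,n_0}\|_{V'}^2>0$, a contradiction. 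Hence $\gamma_n\neq 0$ for every $n\in\mathbb{N}^*$.

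The routine part is the collapse of the series thanks to biorthogonality; the only delicate point is formulating precisely what ``some functional space $V$'' means. This is handled by noting that any $V$ for which the abstract duality gives a genuine observability inequality must include the parabolic mode $\phi_{1,n_0}$ (otherwise $\phi_{1,n_0}$ could not be reached, making $V$ strictly smaller than the space spanned by the remaining modes, which we implicitly exclude), so $\psi_{1,n_0}\in V'\setminus\{0\}$ and the contradiction is genuine.
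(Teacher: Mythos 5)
Your proposal is correct and follows essentially the same route as the paper: the paper also deduces the lemma by duality from the series expansion \eqref{eq: observability inequality - system output}, observing that $\psi_{1,n}^3(L)$ is a nonzero multiple of $\gamma_n$, so that $\gamma_{n_0}=0$ makes the dual trajectory issued from $\psi_{1,n_0}$ invisible from the output and kills the observability inequality. Your extra care about whether $\psi_{1,n_0}$ has nonzero $V'$-norm matches the paper's later refinement (in the proof of Theorem~\ref{thm: observability inequality}) that the line $\mathbb{R}\psi_{1,n_0}$ is not in the closure of the reachable set.
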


\smallskip

Actually, the necessary condition stated in Lemma \ref{eq: necessary condition for observability inequality} is sufficient provided $T>2L$. To prove this, we assume the nontriviality of all coefficients $\gamma_n$ and we exploit this assumption to define, in a spectral way, an appropriate Hilbert space $V' \supset \mathcal{H}^0$ in which the observability inequality \eqref{eq: observability inequality} holds true. 

As announced in Remark \ref{rem_spectrum},
by the Ingham-M{\"u}ntz inequality (see Theorem \ref{thm_IM} in Appendix \ref{sec_IM}), 
for any $T > 2L$ there exists $C>0$ such that
\begin{equation}
\int_0^T \vert \mathcal{X}^3(t,L) \vert^2 \,\mathrm{d}t  \geq 
C \sum_{n\in\mathbb{N}^*} \vert \langle \mathcal{X}(0) , \phi_{1,n} \rangle \vert^2 \vert \psi_{1,n}^3(L) \vert^2 e^{2\lambda_{1,n}T} 
+ C \sum_{m\in\mathbb{Z}} \vert \langle \mathcal{X}(0) , \phi_{2,m} \rangle \vert^2 \vert \psi_{2,m}^3(L) \vert^2 . \label{eq: application Ingham-Munsz}
\end{equation}
Using Lemma~\ref{lem: adjoint A0*}, we have
$\psi_{1,n}^3(L) \sim - (2L)^{3/2} e^{cL} \frac{\gamma_n}{n^2\pi^2} e^{-n^2\pi^2/L}$
as $n \rightarrow + \infty$ and $\vert \psi_{2,m}^3(L) \vert \sim \frac{1}{\sqrt{L}}$ as $\vert m \vert \rightarrow + \infty$.
Moreover, $\lambda_{1,n}\sim -\frac{n^2\pi^2}{L^2}$ as $n \rightarrow + \infty$.
This motivates to define 
\begin{equation*}
    V' = \bigg\{ \sum_{n\in\mathbb{N}^*} a_n \psi_{1,n} + \sum_{m \in\mathbb{Z}} b_m \psi_{2,m} \, \mid\,   a_n\in\mathbb{R},\; b_m\in\mathbb{C} ,  \ \ 
     \sum_{n\in\mathbb{N}^*} \vert a_n \vert^2 \dfrac{ \gamma_n^2}{n^4}e^{-\nu n^2}
+ \sum_{m\in\mathbb{Z}} \vert b_m \vert^2 < +\infty   \bigg\} 
\end{equation*}
where
\begin{equation}\label{def_nu}
\nu = 2\frac{\pi^2}{L}\left(1+\frac{T}{L}\right) .
\end{equation}
Endowed with the norm 
$$
\!\!\!  \Vert \mathcal{X} \Vert_{V'}^2 =  
     \sum_{n\in\mathbb{N}^*} \! \vert \langle \mathcal{X} , \phi_{1,n} \rangle \vert^2 \dfrac{ \gamma_n^2}{n^4} e^{-\nu n^2} 
\! + \! \sum_{m\in\mathbb{Z}}\! \vert \langle \mathcal{X} , \phi_{2,m} \rangle \vert^2 ,
$$
$V'$ is a Hilbert space under the assumption that $\gamma_n\neq 0$ for any $n\in\mathbb{N}^*$,
and \eqref{eq: application Ingham-Munsz} yields the observability inequality \eqref{eq: observability inequality}.
Noting that, by Lemma \ref{lem: A0 riesz spectral}, 
\begin{multline*}
\mathcal{H}^0 = \bigg\{  \sum_{n\in\mathbb{N}^*} a_n \phi_{1,n} + \sum_{m \in\mathbb{Z}} b_m \phi_{2,m}  \, \mid\,  a_n\in\mathbb{R},\; b_m\in\mathbb{C} , \ \ 
\sum_{n\in\mathbb{N}^*} \vert a_n \vert^2  + \sum_{m\in\mathbb{Z}} \vert b_m \vert^2 < +\infty   \bigg\} ,
\end{multline*}
by duality, we define the Hilbert space
\begin{subequations}\label{def_V}
\begin{equation}\label{def_V - 1}
V =  \bigg\{  \sum_{n\in\mathbb{N}^*} a_n \phi_{1,n} + \sum_{m \in\mathbb{Z}} b_m \phi_{2,m}  \, \mid\,  a_n\in\mathbb{R},\; b_m\in\mathbb{C} , \ \ 
\sum_{n\in\mathbb{N}^*} \vert a_n \vert^2 \dfrac{n^4}{ \gamma_n^2}e^{\nu n^2} + \sum_{m\in\mathbb{Z}} \vert b_m \vert^2 < +\infty   \bigg\}
\end{equation}
(with $\nu$ defined by \eqref{def_nu})
endowed with the norm 
\begin{equation}
\!\!
\Vert \mathcal{X} \Vert_{V}^2 = \!\! \sum_{n\in\mathbb{N}^*} \! \vert \langle \mathcal{X} , \psi_{1,n} \rangle \vert^2 \dfrac{n^4}{ \gamma_n^2}e^{\nu n^2}  \! + \! \sum_{m\in\mathbb{Z}} \vert \langle \mathcal{X} , \psi_{2,m} \rangle \vert^2 .
\end{equation}
\end{subequations}

To obtain the finite-time observability \eqref{eq: observability inequality finite time} in another appropriate Hilbert space $V_0'$, the analysis is similar. Using the identity before \eqref{eq: observability inequality - system output}, we have
$\langle \mathcal{X}(T) , \phi_{1,n} \rangle = e^{\lambda_{1,n}T} \langle \mathcal{X}(0) , \phi_{1,n} \rangle$
and
$\langle \mathcal{X}(T) , \phi_{2,m}\rangle = e^{\bar\lambda_{2,m}T}\langle \mathcal{X}(0) , \phi_{2,m}\rangle$ 
for all $n\in\mathbb{N}^*$ and $m\in\mathbb{Z}$. Following the same arguments as above, we define the Hilbert spaces $V_0$ and $V_0'$ similarly as $V$ and $V'$ but with a different $\nu$, namely,
$\nu = \frac{2\pi^2}{L}$ (in contrast to \eqref{def_nu}, here $\nu$ does not depend on $T$).
With this definition and the above arguments, we obtain \eqref{eq: observability inequality finite time}.

It follows from the definition of the spaces that
\begin{equation}\label{inclusions1}
\{0\}\times H_{(0)}^1(0,L) \times L^2(0,L)\subset V\subset V_0 
\end{equation}
and, using $\vert\gamma_n\vert\leq \frac{\mathrm{Cst}}{n^2}e^{n^2\pi^2/L}$ (see Remark \ref{remgamman}),  that 
\begin{equation}\label{inclusions2}
V\subset V_0\subset\mathcal{H}^0\subset V_0'\subset V'
\end{equation}
with continuous and dense embeddings, and that $V'$ (resp., $V_0'$) is the dual of $V$ (resp., of $V_0$) with respect to the pivot space $\mathcal{H}^0$.

We obtain the following result.
Recall that $\gamma_n$ is defined by \eqref{eq: def gamma_1 - 1} if $\frac{n^2 \pi^2}{L^2}\neq c$ and by \eqref{eq: def gamma_1 - 2} if $\frac{n^2 \pi^2}{L^2} = c$. 

\smallskip

\begin{theorem}\label{thm: observability inequality}
Assume that $T>2L$ and that
\begin{equation}\label{assumption_gamma_n}
\gamma_n\neq 0\qquad\forall n\in\mathbb{N}^* .
\end{equation}
Then:
\begin{enumerate}
\item\label{item_obs} There exists $C_T>0$ such that the observability inequality \eqref{eq: observability inequality} is satisfied.
\item\label{item_finitetimeobs} There exists $C_T^0>0$ such that the finite-time observability inequality \eqref{eq: observability inequality finite time} is satisfied.
\item\label{item_exact} The control system \eqref{eq: cascade equation} is exactly controllable in time $T$ in the space $V$, i.e., given any $\mathcal{X}_0,\mathcal{X}_1\in V$, there exists $u \in L^2(0,T)$ such that the solution $\mathcal{X}=(y,z,\partial_t z)^\top$ of \eqref{eq: cascade equation} starting at $\mathcal{X}(0)=\mathcal{X}_0$ satisfies $\mathcal{X}(T)=\mathcal{X}_1$.
\item\label{item_exact_null} The control system \eqref{eq: cascade equation} is exactly null controllable in time $T$ in the space $V_0$, i.e., given any $\mathcal{X}_0\in V_0$, there exists $u \in L^2(0,T)$ such that the solution $\mathcal{X}=(y,z,\partial_t z)^\top$ of \eqref{eq: cascade equation} starting at $\mathcal{X}(0)=\mathcal{X}_0$ satisfies $\mathcal{X}(T)=0$.
\item\label{item_approx} The control system \eqref{eq: cascade equation} is approximately controllable in time $T$ in the Hilbert space $\mathcal{H}^0$ (or in any other Hilbert space $H$ such that $H\subset \mathcal{H}^0$ or $\mathcal{H}^0\subset H$ with continuous and dense embeddings), i.e., given any $\mathcal{X}_0,\mathcal{X}_1\in \mathcal{H}^0$ and any $\varepsilon>0$, there exists $u \in L^2(0,T)$ such that the solution $\mathcal{X}=(y,z,\partial_t z)^\top$ of \eqref{eq: cascade equation} starting at $\mathcal{X}(0)=\mathcal{X}_0$ satisfies $\Vert \mathcal{X}(T)-\mathcal{X}_1\Vert_{\mathcal{H}^0}\leq\varepsilon$ .
\end{enumerate}

Moreover, if $T<2L$ or if $\gamma_n=0$ for some $n\in\mathbb{N}^*$ then the control system \eqref{eq: cascade equation} is neither exactly nor exactly null nor approximately controllable in any time $T>0$, in any Hilbert space $H$ such that $H\subset \mathcal{H}^0$ or $\mathcal{H}^0\subset H$ with continuous and dense embeddings.
\end{theorem}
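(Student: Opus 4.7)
The plan is to deduce items \ref{item_obs}--\ref{item_approx} from the analysis already carried out before the theorem statement, and then to handle the negative statements by two separate elementary arguments: a single-mode observation for the $\gamma_n=0$ case and a finite-propagation-type d'Alembert construction for the $T<2L$ case.

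\textbf{Positive direction.} Items \ref{item_obs} and \ref{item_finitetimeobs} are essentially bookkeeping: in the Ingham-M\"untz bound \eqref{eq: application Ingham-Munsz}, plugging in the asymptotics $\vert\psi_{1,n}^3(L)\vert^2 e^{2\lambda_{1,n}T} \sim (\mathrm{Cst})\gamma_n^2 n^{-4} e^{-\nu n^2}$ with $\nu=\frac{2\pi^2}{L}(1+T/L)$ from Lemmas \ref{lem: adjoint A0*} and \ref{lem: eigenstructures of A0}, together with $\vert\psi_{2,m}^3(L)\vert^2\sim 1/L$, recovers exactly the $V'$-norm on the right-hand side, provided every $\gamma_n$ is nonzero so that the finitely many low-frequency weights do not degenerate. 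The threshold $T>2L$ is dictated by the hyperbolic gap $\pi/L$ in the Ingham-M\"untz inequality. For \eqref{eq: observability inequality finite time} the identities $\langle \mathcal{X}(0),\phi_{1,n}\rangle = e^{-\lambda_{1,n}T}\langle \mathcal{X}(T),\phi_{1,n}\rangle$ and its hyperbolic analogue absorb the $T$-exponential, yielding the $T$-independent weight $\nu=2\pi^2/L$. Items \ref{item_exact} and \ref{item_exact_null} then follow by the standard HUM duality combined with the admissibility bound of Appendix \ref{sec_app_B}: exact (resp.\ exact null) controllability in $V$ (resp.\ in $V_0$) is equivalent to \eqref{eq: observability inequality} (resp.\ \eqref{eq: observability inequality finite time}). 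Item \ref{item_approx} is an immediate consequence: finite spectral sums $\sum_{n\leq N} a_n\phi_{1,n}+\sum_{\vert m\vert\leq M}b_m\phi_{2,m}$ lie in $V$ and are dense in $\mathcal{H}^0$ (and hence in every Hilbert space continuously and densely sandwiching $\mathcal{H}^0$), so exact controllability in $V$ yields approximate controllability in any such space.

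\textbf{Negative direction.} If $\gamma_{n_0}=0$, then $\psi_{1,n_0}^3(L)=0$ by Lemma \ref{lem: adjoint A0*}, so the dual trajectory from $\mathcal{X}(0)=\psi_{1,n_0}$ has identically zero observation for all time; by duality the mode $\phi_{1,n_0}\in\mathcal{H}^0$ is orthogonal to the reachable set in every surrounding Hilbert space, ruling out the three controllability notions at every time. For $T<2L$ the hyperbolic sector alone defeats controllability: setting $f\equiv 0$ in the adjoint reduces the dynamics on $(g,h)$ to the wave equation with $g(t,0)=0$, $\partial_x g(t,L)=0$, $h=-\partial_t g$, with observation $h(t,L)=-\partial_t g(t,L)$. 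Writing $g(t,x)=F(t+x)-F(t-x)$ (which encodes $g(t,0)=0$) with $F'$ required to be $2L$-antiperiodic (encoding $\partial_x g(t,L)=0$), the condition $\partial_t g(t,L)=0$ on $[0,T]$ translates into the vanishing of $F'$ on an interval of length $T$ within a $2L$-period, leaving $2L-T>0$ degrees of freedom; any nonzero such $F'$ gives a nonzero $\mathcal{X}(0)\in\mathcal{H}^0$ with zero observation on $[0,T]$, ruling out observability and hence controllability in any surrounding Hilbert space.

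\textbf{Main obstacle.} The delicate point is the tight matching between the Ingham-M\"untz estimate, the asymptotics of the $\psi^3(L)$-coefficients, and the weights defining $V'$ and $V_0'$, so that the resulting Hilbert spaces are exactly the ones advertised rather than merely comparable. The assumption that \emph{every} $\gamma_n$ is nonzero (not just all but finitely many) is needed precisely to keep the finitely many low-frequency terms in the observability norm nondegenerate; relaxing this would force one to factor out a finite-dimensional uncontrollable subspace and modify the statement accordingly.
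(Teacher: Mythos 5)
Your proposal is correct and, for the bulk of the theorem, follows the same route as the paper: items \ref{item_obs})--\ref{item_finitetimeobs}) are read off from the Ingham--M\"untz estimate \eqref{eq: application Ingham-Munsz} combined with the asymptotics of $\psi_{1,n}^3(L)$ and $\psi_{2,m}^3(L)$ from Lemma~\ref{lem: adjoint A0*} (which is exactly how the spaces $V'$ and $V_0'$ were calibrated in the discussion preceding the theorem), items \ref{item_exact})--\ref{item_exact_null}) follow by the Lions/HUM duality together with the admissibility of Appendix~\ref{sec_app_B}, and item \ref{item_approx}) by density of finite spectral sums. The $\gamma_{n_0}=0$ obstruction is also the paper's argument, up to a small slip in wording: it is $\psi_{1,n_0}$ (not $\phi_{1,n_0}$) that annihilates the reachable set, since the dual trajectory issued from $\psi_{1,n_0}$ has identically vanishing observation; the conclusion that $\phi_{1,n_0}$ is then outside the closure of the reachable set follows because $\langle\phi_{1,n_0},\psi_{1,n_0}\rangle=1$, the bases $\Phi$ and $\Psi$ being biorthogonal rather than orthonormal. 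The one place where you genuinely diverge is the case $T<2L$: the paper invokes the existence of an infinite-dimensional space of ``invisible solutions'' by citation, whereas you construct them explicitly by d'Alembert, writing $g(t,x)=F(t+x)-F(t-x)$ with $F'$ $2L$-antiperiodic and observing that vanishing of $2F'(t+L)$ on $[0,T]$ leaves $F'$ free on an interval of length $2L-T>0$. This is a nice, self-contained replacement for the citation and it also makes the infinite codimension of the closure of the reachable set transparent; to fully rule out exact \emph{null} controllability with it, one should add the one-line remark that the wave group is invertible, so these invisible solutions satisfy $\mathcal{X}(T)\neq 0$ and the final-state observability inequality \eqref{eq: observability inequality finite time} also fails.
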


\smallskip

\begin{proof}
The previous discussion, using the Ingham-M\"untz inequality, has shown that the assumptions $T>2L$ and \eqref{assumption_gamma_n} imply \ref{item_obs}) and \ref{item_finitetimeobs}). 
By classical duality arguments due to \cite{lions} (see, e.g., \cite[Theorem 5.30]{trelat_SB}, \cite[Theorem 11.2.1]{tucsnakweiss}), we have \ref{item_obs}) $\Leftrightarrow$ \ref{item_exact}) and \ref{item_finitetimeobs}) $\Leftrightarrow$ \ref{item_exact_null}). 
%
%
The fact that \ref{item_exact}) $\Rightarrow$ \ref{item_approx}) is obtained by density.

If $\gamma_n = 0$ for some $n\in\mathbb{N}^*$ then the 1D subspace $\mathbb{R}\psi_{1,n}$ is not in the closure of the controllability space. This yields half of the last statement of the theorem.
If $T<2L$, the same conclusion is obtained because in this case the wave equation \eqref{eq: cascade equation - 2} strongly loses controllability: actually, the closure of the reachable set has an infinite codimension in $H_{(0)}^1(0,L) \times L^2(0,L)$ (this can be seen, for example, by noting that the set of ``invisible solutions" has infinite dimension, see \cite[Section 2A]{lebeautrelat}).
\end{proof}

\smallskip

\begin{remark}
The conditions $T>2L$ and \eqref{assumption_gamma_n} are almost sharp. Only the limit case where $T=2L$ is not treated. When $T=2L$, the wave equation \eqref{eq: cascade equation - 2} is exactly controllable but the Ingham-M\"untz inequality does not cover this case and we do not know whether the conclusion of Theorem \ref{thm: observability inequality} holds true or not.
\end{remark}

\smallskip

\begin{remark}
Of course, exact controllability in $V$ implies exact null controllability in $V$. But Item \ref{item_exact_null}) of Theorem \ref{thm: observability inequality} gives the stronger exact null controllability in $V_0\supset V$.
\end{remark}

\smallskip

\begin{remark}
In Item \ref{item_exact}) (resp., in Item \ref{item_exact_null})), although the initial condition is taken in the smaller space $V\subset\mathcal{H}^0$ (resp., $V_0\subset\mathcal{H}^0$), the corresponding solution lives in $\mathcal{H}^0$ (by Lemma \ref{lem_adm} in Appendix \ref{sec_app_B}).

Roughly speaking, compared with $\mathcal{H}^0 = L^2(0,L) \times H_{(0)}^1(0,L) \times L^2(0,L)$, the spaces $V$ and $V_0$ are much smaller than $\mathcal{H}^0$ in the $a_n$ components, but, by \eqref{def_V - 1} (see also \eqref{inclusions1}), they coincide with $\mathcal{H}^0$ in the $b_m$ components. This is in accordance with the fact that the observability inequalities \eqref{eq: observability inequality} and \eqref{eq: observability inequality finite time} coincide, when $\mathcal{X}^1\equiv 0$, with the observability inequality for the wave equation $\partial_{tt}\mathcal{X}^3=\partial_{xx}\mathcal{X}^3$, $\mathcal{X}^3(t,0)=\partial_x \mathcal{X}^3(t,L)=0$, namely, $\int_0^T \vert \mathcal{X}^3(t,L)\vert^2\, \mathrm{d}t \geq \mathrm{Cst} \big( \Vert \mathcal{X}^3(0)\Vert_{L^2}^2 + \Vert\partial_t \mathcal{X}^3(0)\Vert_{H^{-1}_{(0)}}^2 \big)$ (where $H^{-1}_{(0)}(0,L)$ is the dual of $H^1_{(0)}(0,L)$ with respect to $L^2(0,L)$) when $T>2L$, 
which holds true by a computation similar to that done in Appendix \ref{sec_app_B}, proof of Lemma \ref{lem_adm}.

So, in some sense, this is only for the reaction-diffusion part that the initial data to be controlled need to be taken in a much smaller space. 
\end{remark}

\smallskip

\begin{remark}
Under \eqref{assumption_gamma_n},
the Hilbert space $V$ defined by \eqref{def_V} (resp., the space $V_0$) is not far from being the largest possible subspace of $\mathcal{H}^0$ in which the control system \eqref{eq: cascade equation} is exactly (resp. null) controllable in time $T>2L$ with controls $u \in L^2(0,T)$. 
This is because the Ingham-M\"untz inequality, and more precisely, the parabolic part, i.e., the ``M\"untz part" of that inequality is not far from being sharp: see Appendix \ref{sec_IM} for a discussion and precise statements.
According to this discussion, the exponential weights in the definition of $V$ and $V_0$ are unavoidable.
\end{remark}

\smallskip

The way we have defined $V$ and $V'$ follows the work \cite{zhang2004polynomial} cited at the beginning. In that paper, the coupling between heat and wave was at the boundary and the resulting controllability space $V$ was defined spectrally with monotone exponentially increasing coefficients (as if $\gamma_n=1$ in \eqref{def_V}). Here, with an internal coupling, our controllability space $V$, given by \eqref{def_V}, is much more complex and unconventional, making this study particularly interesting.
Indeed, depending on the coupling function $\beta\in L^\infty(0,L)$, the coefficients $\gamma_n$ may be wildly oscillating.

It is therefore of interest to consider particular examples of functions $\beta$ and to estimate the resulting coefficients $\gamma_n$, what we do next.

\smallskip

Let us assume that 
\begin{equation}\label{beta_char}
\beta(x) = \beta_0 \, \mathds{1}_{[a,b]}(x) \qquad \forall x\in(0,L)
\end{equation}
for some $\beta_0 \in\mathbb{R}\setminus\{0\}$ and $0 \leq a < b \leq L$. 
Recalling that $\lambda_{1,n} = c - \frac{n^2 \pi^2}{L^2}$, we infer from \eqref{eq: def gamma_1 - 1} that
\begin{subequations}
\label{gamma:def:remark}
\begin{align}
& \gamma_n 
= \frac{\beta_0}{\lambda_{1,n}^2 + \frac{n^2 \pi^2}{L^2}} \bigg( - \frac{n\pi}{L} \sinh( \lambda_{1,n} b ) \cos\left( \frac{n\pi b}{L} \right)  \nonumber \\
& + \frac{n\pi}{L} \sinh( \lambda_{1,n} a ) \cos\Big( \frac{n\pi a}{L} \Big) 
+ \lambda_{1,n} \cosh( \lambda_{1,n} b ) \sin\Big( \frac{n\pi b}{L} \Big)  - \lambda_{1,n} \cosh ( \lambda_{1,n} a ) \sin\Big( \frac{n\pi a}{L} \Big) \bigg)  
\end{align}
if $\frac{n^2 \pi^2}{L^2}\neq c$, and
\begin{equation}
\gamma_n = - \frac{\beta_0 L}{n\pi} \left( b \cos\left(\frac{n\pi}{L} b \right) - a \cos\left(\frac{n\pi}{L} a \right) \right) 
 + \frac{\beta_0 L^2}{n^2\pi^2} \left( \sin\left(\frac{n\pi}{L} b \right) - \sin\left(\frac{n\pi}{L} a \right) \right) 
\end{equation}
\end{subequations}
if $\frac{n^2 \pi^2}{L^2} = c$.
This computation shows the complexity of the coefficients $\gamma_n$ and thus of the functional space $V$, even in this simple case where $\beta$ is the characteristic function of an interval. There are some critical values of $a,b$ for which there exists $n\in\mathbb{N}^*$ such that $\gamma_n=0$ and thus controllability is lost. 
Let us elaborate further.

\smallskip
In the particular case $a=0$ and $b=L$, i.e., $\beta$ constant on $(0,L)$, equal to $\beta_0\neq 0$, we get from \eqref{gamma:def:remark} that
$$
\gamma_n = \frac{(-1)^{n}\beta_0 \frac{n\pi}{L}}{\left( \frac{n^2 \pi^2}{L^2} - c \right)^2 + \frac{n^2 \pi^2}{L^2}} \sinh\left(\Big( \frac{n^2 \pi^2}{L^2} - c \Big) L \right) \neq 0 
$$
if $\frac{n^2 \pi^2}{L^2}\neq c$, and $\gamma_n = \frac{(-1)^{n+1}\beta_0 L^2}{n\pi} \neq 0$ if $\frac{n^2 \pi^2}{L^2} = c$.
Hence, 
the assumption that $\gamma_n\neq 0$ for any $n\in\mathbb{N}^*$ is satisfied and thus the control system \eqref{eq: cascade equation} is exactly null controllable in any time $T>2L$ in the space $V_0$. Moreover, we have $\gamma_n^2\sim \frac{L^6\beta_0^2 e^{-2cL}}{2n^6\pi^6}e^{2n^2\pi^2/L}$ as $n\rightarrow+\infty$, hence the condition 
defining $V_0$ becomes $\sum_{n\in\mathbb{N}^*}n^{10}\vert a_n\vert^2+\sum_{m\in\mathbb{Z}}\vert b_m\vert^2<+\infty$. We thus infer that $V_0$ is a subspace of $H^5(0,L) \times H^1_{(0)}(0,L) \times L^2(0,L)$ (with boundary conditions that can easily be inferred from the differential equations satisfied by the functions $\phi_{1,n}$ and $\phi_{2,m}$).

\smallskip

Let us now take $a=0$ and let us make vary $b$. We also take $L=1$ and $c = 50$. 
Fig~\ref{fig: gamma_2_for_L_1_c_50_a_0} shows $\gamma_2$ as a function of $b \in [0,L]$; we have $\gamma_2=0$ for $b\approx 0.586$ and thus, controllability is lost for this value of $b$. 
\begin{figure}[h]
	\centering
	\includegraphics[width=3.5in]{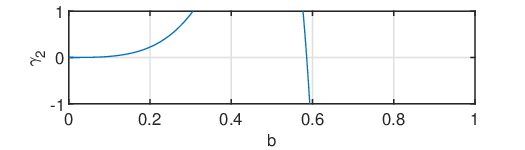}
	\caption{$\gamma_2$ in function of $b \in [0,1]$, with $L=1$, $c=50$ and $a=0$.}
	\label{fig: gamma_2_for_L_1_c_50_a_0}
\end{figure}

\smallskip
In general, we see on \eqref{gamma:def:remark} that $\gamma_n$ may have a wildly oscillating behavior. The coefficients  $n^4 e^{2n^2\pi^2/L}/\gamma_n^2$ defining $V_0$ may thus oscillate as well, depending on the values of $n$, between several powers of $e^{n^2}$. It can be seen that that their asymptotic lower bound, up to scaling, is $e^{\mathrm{Cst} n^2}$ when $b<L$, and $n^{10}$ when $b=L$. Being more precise would require fine arithmetic considerations.

Besides, we have the following genericity result.

\smallskip

\begin{lemma}\label{prop:contr}
Define $S=\{(a,b)\,\mid\, 0\leq a<b\leq L\}$.
The subset $\hat S$ of $S$ such that $\gamma_n\neq 0$ for any $n\in\mathbb{N}^*$ is dense and of full Lebesgue measure in $S$.
\end{lemma}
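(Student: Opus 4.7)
The plan is to apply the classical fact that the zero set of a non-trivial real-analytic function on $\mathbb{R}^d$ has Lebesgue measure zero and empty interior, and then combine this with $\sigma$-additivity of the Lebesgue measure and with the Baire category theorem. To set this up, I first observe from \eqref{gamma:def:remark} that, for each fixed $n \in \mathbb{N}^*$, the coefficient $\gamma_n$ extends to a real-analytic function $\tilde\gamma_n : \mathbb{R}^2 \to \mathbb{R}$ of the variable $(a,b)$. The dichotomy in \eqref{gamma:def:remark} is governed by the condition $n^2\pi^2/L^2 = c$, which depends only on $n$ (with $L$ and $c$ fixed) and not on $(a,b)$, so for each $n$ a single expression is selected, and both expressions are polynomial combinations of $a$, $b$ and of entire functions of these variables.

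Second, I check that $\tilde\gamma_n \not\equiv 0$. This is immediate from the computation already carried out in the paper just before the statement of Lemma~\ref{prop:contr}: for $(a,b)=(0,L)$, which corresponds to $\beta \equiv \beta_0$ on $(0,L)$, one has $\gamma_n(0,L) \neq 0$ for every $n \in \mathbb{N}^*$. Hence $\tilde\gamma_n$ is a non-trivial real-analytic function on $\mathbb{R}^2$, and the standard theorem on zero sets of real-analytic functions gives that $\{\tilde\gamma_n = 0\}$ has two-dimensional Lebesgue measure zero and empty interior in $\mathbb{R}^2$.

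To conclude, set $Z_n = \{(a,b) \in S : \gamma_n(a,b) = 0\}$, so that $S \setminus \hat S = \bigcup_{n \in \mathbb{N}^*} Z_n$. Each $Z_n$ is relatively closed in $S$, has two-dimensional Lebesgue measure zero, and is nowhere dense in $S$ (the latter because $\{\tilde\gamma_n=0\}$ has empty interior in $\mathbb{R}^2$ and the interior $\{0<a<b<L\}$ of $S$ is dense in $S$). The $\sigma$-additivity of the Lebesgue measure then yields $|S\setminus\hat S|=0$, while Baire category on $S$, viewed as a locally compact Hausdorff (hence Baire) space, gives the density of $\hat S$. The argument is routine; the only points that need be checked are the real-analytic regularity and the non-triviality of $\gamma_n$, both of which are transparent from \eqref{gamma:def:remark} and from the constant-$\beta$ case already computed in the paper.
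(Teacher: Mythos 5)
Your proof is correct and follows essentially the same route as the paper's: both rest on the fact that each $\gamma_n$ is a nontrivial real-analytic function of $(a,b)$, so that each zero set $Z_n$ is Lebesgue-null and nowhere dense, and then conclude by $\sigma$-additivity and the Baire category theorem. The only difference is that you make explicit the nontriviality check via the constant-$\beta$ computation at $(a,b)=(0,L)$, which the paper leaves implicit.
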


\smallskip

\begin{proof}
Given any $n\in\mathbb{N}^*$, the set $Z_n=\{(a,b)\in S\,\mid\, \gamma_n=0\}$ is closed, of zero measure and of empty interior because $\gamma_n$ is a nontrivial analytic function of $(a,b)$. 
The union $Z$ of all $Z_n$, $n\in\mathbb{N}^*$, is of zero measure, and of empty interior by the Baire theorem.
The set $\hat S$ is then defined as the complement of $Z$ in $S$.
\end{proof}

\smallskip

\begin{remark}\label{rem_nonrobust}
As a consequence of Theorem \ref{thm: observability inequality} and of Lemma~\ref{prop:contr}, assuming that $T>2L$ and that $\beta$ is defined by \eqref{beta_char} for some $(a,b)\in S$, the control system \eqref{eq: cascade equation} is exactly controllable in time $T$ in the space $V$ if and only if $(a,b)\in \hat S$. In this sense, exact controllability is generic. However, although $\hat S$ is dense and of full Lebesgue measure in $S$, it may fail to be open, and thus robustness of controllability with respect to $(a,b)$ may fail.
\end{remark}

\subsection{An observability property for the system \eqref{eq: cascade equation}}
\noindent
In the previous section, in order to establish an exact controllability property for the control system \eqref{eq: cascade equation}, we have proved the observability property \eqref{eq: observability inequality} for the dual system $\dot{\mathcal{X}}(t) = \mathcal{A}_0^* \mathcal{X}(t)$.
In this section, we establish an additional observability property for the ``primal" system \eqref{eq: cascade equation}.

Due to the cascade nature of \eqref{eq: cascade equation}, any measurement that is solely related to the state $z(t,\cdot)$ of the wave equation cannot provide an observable system (one cannot reconstruct $y(t,\cdot)$ in a univocal way). For this reason, we study the case of a measurement done on the reaction-diffusion part \eqref{eq: cascade equation - 1}, of the form
\begin{equation}\label{eq: measurement heat distributed}
    y_o(t) = \int_0^L c_o(x) y(t,x) \,\mathrm{d}x 
\end{equation}
for some given $c_o \in L^2(0,L)$. Defining
\begin{align*}
	c_{1,n} & = \langle c_o , \phi_{1,n}^1 \rangle_{L^2} = \sqrt{\frac{2}{L}}  \int_0^L c_0(x) \sin\left(\frac{n\pi}{L}x\right) \,\mathrm{d}x , \\ 
	c_{2,m} & = \langle c_o , \phi_{2,m}^1 \rangle_{L^2} = \int_0^L c_0(x) \phi_{2,m}^1(x) \,\mathrm{d}x ,
\end{align*}
for all $n\in\mathbb{N}^*$ and $m\in\mathbb{Z}$,
where $\phi^1_{2,m}$ is defined by \eqref{lem: eigenstructures of A0 - hyperbolic part - 1}, we infer from \eqref{lem: A0 riesz spectral} 
that 
$
y_o(t) =  \sum_{n\in\mathbb{N}^*} c_{1,n} e^{\lambda_{1,n} t} \langle \mathcal{X}(0) , \psi_{1,n} \rangle 
	 + \sum_{m\in\mathbb{Z}} c_{2,m} e^{\lambda_{2,m} t} \langle \mathcal{X}(0) , \psi_{2,m} \rangle $.
This motivates the introduction of the Hilbert space 
\begin{multline*}
    W =  \bigg\{ \sum_{n\in\mathbb{N}^*} a_n \psi_{1,n} + \sum_{m \in\mathbb{Z}} b_n \psi_{2,m} \, \mid\,  a_n\in\mathbb{R},\; b_m\in\mathbb{C} , \ \ 
      \sum_{n\in\mathbb{N}^*} \vert a_n \vert^2 \vert c_{1,n}\vert^2
+ \sum_{m\in\mathbb{Z}} \vert b_m \vert^2 \vert c_{2,m} \vert^2 < +\infty
    \bigg\} 
\end{multline*}
endowed with the norm 
\begin{equation*}
    \Vert (f,g,h) \Vert_{W}^2 =  
     \sum_{n\in\mathbb{N}^*} \vert \langle (f,g,h) , \phi_{1,n} \rangle \vert^2 c_{1,n}^2 
 + \sum_{m\in\mathbb{Z}} \vert \langle (f,g,h) , \phi_{2,m} \rangle \vert^2 \vert c_{2,m} \vert^2 .
\end{equation*}
As before, the Ingham-M{\"u}ntz type inequality leads to the following result.

\smallskip

\begin{proposition}\label{prop:obser}
    A necessary condition for system \eqref{eq: cascade equation} with output \eqref{eq: measurement heat distributed} to be approximately or exactly observable is that $c_{1,n} \neq 0$ for any $n\in\mathbb{N}^*$ and $c_{2,m} \neq 0$ for any $m \in\mathbb{Z}$. Conversely, under the latter conditions, the system is exactly observable in time $T > 2L$ in the space $W$. 
\end{proposition}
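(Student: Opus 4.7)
The plan is to follow verbatim the Ingham-M\"untz-based argument already used in Section~\ref{sec: exact controllability} for the dual system, now applied to the primal system \eqref{eq: cascade equation} with the scalar output \eqref{eq: measurement heat distributed}. The only genuinely new ingredients are the coefficients $c_{1,n}$ and $c_{2,m}$ that replace the boundary values $\psi_{1,n}^3(L)$ and $\psi_{2,m}^3(L)$ of the controllability proof.

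First, I would expand the initial state in the Riesz basis of $\mathcal{A}_0$ furnished by Lemma~\ref{lem: A0 riesz spectral}: write $\mathcal{X}(0)=\sum_n a_n\phi_{1,n}+\sum_m b_m\phi_{2,m}$ with $a_n=\langle \mathcal{X}(0),\psi_{1,n}\rangle$ and $b_m=\langle \mathcal{X}(0),\psi_{2,m}\rangle$, apply the semigroup formula \eqref{eq: T0 semigroup}, and integrate against $c_o$ to recover exactly the exponential expansion of $y_o$ already stated before the definition of $W$. With the expansion in hand, the \emph{necessary condition} is immediate: if $c_{1,n_0}=0$ for some $n_0$, then the nonzero initial state $\mathcal{X}(0)=\phi_{1,n_0}$ produces $y_o\equiv 0$ and approximate observability fails; the same argument applies verbatim if $c_{2,m_0}=0$ for some $m_0$.

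For the \emph{sufficient condition}, the family of exponents $\{\lambda_{1,n}\}_{n\in\mathbb{N}^*}\cup\{\lambda_{2,m}\}_{m\in\mathbb{Z}}$ is identical to the one used in the proof of Theorem~\ref{thm: observability inequality}, so the Ingham-M\"untz inequality of Appendix~\ref{sec_IM} applies with the same threshold $T>2L$. After the time-reversal $t\mapsto T-t$ that turns the decaying parabolic exponentials into growing ones (exactly as in the step leading to \eqref{eq: application Ingham-Munsz}), it yields
$$
\int_0^T \vert y_o(t)\vert^2\,\mathrm{d}t \ \geq\  C \sum_{n\in\mathbb{N}^*} \vert c_{1,n}\vert^2 \vert a_n\vert^2 e^{2\lambda_{1,n}T} + C\sum_{m\in\mathbb{Z}} \vert c_{2,m}\vert^2 \vert b_m\vert^2.
$$
Using $a_n e^{\lambda_{1,n}T}=\langle \mathcal{X}(T),\psi_{1,n}\rangle$ and $\vert b_m e^{\lambda_{2,m}T}\vert=\vert \langle \mathcal{X}(T),\psi_{2,m}\rangle\vert$ (since $\lambda_{2,m}\in i\mathbb{R}$), the right-hand side is (up to the constant $C$) exactly $\Vert \mathcal{X}(T)\Vert_W^2$; and because $\mathcal{X}(T)$ determines $\mathcal{X}(0)$ through the bijective semigroup $T_0(T)$, this is the exact observability inequality in the space $W$ specified in the statement.

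The work really reduces to bookkeeping, and the one point that requires a brief justification is that the observation operator $\mathcal{X}\mapsto \int_0^L c_o(x)\mathcal{X}^1(x)\,\mathrm{d}x$ is admissible in the sense required for the Ingham-M\"untz conclusion to transfer to a genuine observability estimate; this follows directly from $c_o\in L^2(0,L)$ and the Riesz basis property, since the output is then a bounded linear functional of $\mathcal{X}(t)\in\mathcal{H}^0$ with uniformly square-summable Fourier coefficients $c_{1,n},c_{2,m}$. The only subtlety I anticipate is checking, in the definition of $W$, that the absorption of the parabolic exponential weight into the identification $\mathcal{X}(T)\leftrightarrow\mathcal{X}(0)$ is consistent with the stated norm (in particular that the coefficients $c_{1,n}$ can indeed be arbitrarily small without destroying the Hilbert space structure of $W$, which is automatic once $c_{1,n}\neq 0$), but no new analytic difficulty arises beyond what was already settled for $\mathcal{A}_0$.
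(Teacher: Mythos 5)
Your proposal is correct and is precisely the argument the paper intends: the paper gives no separate proof of Proposition~\ref{prop:obser}, stating only that ``as before, the Ingham--M\"untz type inequality leads to the following result,'' and your expansion of $y_o$ in the Riesz basis, the counterexample $\mathcal{X}(0)=\phi_{1,n_0}$ (resp.\ $\phi_{2,m_0}$) for necessity, and the direct application of Theorem~\ref{thm_IM} with threshold $T>2L$ for sufficiency reproduce exactly the scheme used for Theorem~\ref{thm: observability inequality}, with $c_{1,n}$, $c_{2,m}$ playing the role of $\psi_{1,n}^3(L)$, $\psi_{2,m}^3(L)$. The only cosmetic remark is that no time reversal is actually needed, since the right-hand side of \eqref{IM} already carries the weights $e^{2\lambda_{1,n}T}$, which is how the identification with the coefficients of $\mathcal{X}(T)$ arises.
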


\smallskip

\begin{remark}
The condition that all Fourier coefficients $c_{1,n} = \sqrt{\frac{2}{L}}  \int_0^L c_0(x) \sin(\frac{n\pi}{L}x) \,\mathrm{d}x$ are nontrivial is the classical observability condition for a reaction-diffusion equation with Dirichlet boundary conditions. The capability of observing the dynamics of the heat equation through the wave equation via the coupling term $\beta z$ in \eqref{eq: cascade equation} is fully captured by the condition that all coefficients  $c_{2,m} = \int_0^L c_0(x) \phi_{2,m}^1(x) \,\mathrm{d}x$, where $\phi^1_{2,m}$ defined by \eqref{lem: eigenstructures of A0 - hyperbolic part - 1} is linear in the coupling function $\beta$, are nontrivial.
Genericity conditions, similar to those given in Lemma \ref{prop:contr}  and Remark \ref{rem_nonrobust}, can then be established.
\end{remark}

\subsection{Consequence: stabilization}
\noindent 
As a general fact, proved in \cite{liuwangxuyu,trelatwangxu}, exact null controllability implies complete stabilizability (for initial data in the same space). As a consequence of Theorem \ref{thm: observability inequality} (resp., of Proposition \ref{prop:obser}), this means that, under the assumption of nontriviality of the coefficients $\gamma_n$, there exists a linear feedback control $u$ (resp., designed from the output \eqref{eq: measurement heat distributed}) such that the system \eqref{eq: cascade equation}, in closed-loop with this feedback, is exponentially stable for initial data in $V_0$, with a decay rate that can be chosen arbitrarily large. 

Two remarks are in order.

First, as said in Remark \ref{rem_nonrobust}, such a stabilization result is certainly nonrobust, as it can already be seen when $\beta$ is the characteristic function \eqref{beta_char}. This means that, in this case, the feedback control that stabilizes \eqref{eq: cascade equation} for given values of $a$ and $b$ may fail to stabilize the system for slightly perturbed values of $a$ and $b$.

Second, this stabilization result, as a consequence of controllability, is purely theoretical and does not provide an easy way to design an explicit feedback control (unless one is able to invert an infinite-dimensional Gramian, or to solve a Riccati equation). Moreover, stabilization is for initial data in $V_0$ that is, as explained above, a very small space.

This motivates the next section, in which we perform stabilization by a completely different approach, yielding a constructive, explicit and robust feedback control strategy for stabilizing the control system \eqref{eq: cascade equation} in the Hilbert space $\mathcal{H}^0$.

\section{Feedback stabilization}\label{sec: control design}
\noindent The objective of this section is the explicit stabilization of the wave-heat cascade \eqref{eq: cascade equation}, based on two measurements: the heat integral quantity $y_o(t) = \int_0^L c_o(x) y(t,x) \,\mathrm{d}x$, defined by \eqref{eq: measurement heat distributed}, and the wave right-boundary velocity, given by
\begin{equation}\label{eq: measurement wave}
    z_o(t) = \partial_t z(t,L) .
\end{equation} 
Namely, we are going to establish the following result that, at this stage, we state in a rather informal way.
\smallskip

\begin{theorem}
Given any $\delta>0$, there exists an output feedback control, based on the measurements \eqref{eq: measurement heat distributed} and \eqref{eq: measurement wave}, which can be explicitly built from a finite number of modes (depending on $\delta$), such that the closed-loop control system \eqref{eq: cascade equation} is exponentially stable in $\mathcal{H}^0 = L^2(0,L) \times H_{(0)}^1(0,L) \times L^2(0,L)$ and also in $\mathcal{H}^1 = H_0^1(0,L) \times H_{(0)}^1(0,L) \times L^2(0,L)$, with the decay rate $\delta$. 
\end{theorem}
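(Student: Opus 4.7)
The plan is to exploit the Riesz-basis spectral decomposition of $\mathcal{A}_0$ (Lemma~\ref{lem: A0 riesz spectral}) to combine a colocated boundary damping for the wave equation with a finite-dimensional Luenberger observer and pole-placement feedback for the finitely many slow parabolic modes. First, I would choose $N \in \mathbb{N}^*$ large enough so that $\lambda_{1,n} = c - n^2\pi^2/L^2 < -\delta - \eta$ for some fixed $\eta>0$ and every $n > N$; the parabolic tail is then automatically stable at a rate greater than $\delta$ and needs no active control. The slow parabolic modes $\phi_{1,1},\ldots,\phi_{1,N}$, together with all hyperbolic modes $\phi_{2,m}$, $m\in\mathbb{Z}$, are the ones that actually require stabilization.

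Second, split the control as $u(t) = -k z_o(t) + v(t)$, where $k>0$ is chosen large enough so that the resulting 1D wave equation with Dirichlet condition at $x=0$ and dissipative boundary condition $\partial_x z(t,L)=-k\partial_t z(t,L)$ is exponentially stable in $H^1_{(0)}(0,L)\times L^2(0,L)$ at a rate strictly greater than $\delta$ (which can be ensured by classical boundary-damped wave estimates). The residual input $v(t)$ is then designed from a finite-dimensional Luenberger observer for the slow parabolic coefficients $\alpha_n(t)=\langle \mathcal{X}(t),\psi_{1,n}\rangle$, $n=1,\ldots,N$. Expanding $y_o(t)=\sum_n c_{1,n}\alpha_n(t)+\sum_m c_{2,m}\beta_m(t)$ in the Riesz basis of $\mathcal{A}_0^*$ and using that the hyperbolic contribution decays exponentially by the first step, one shows that the observer error system is detectable under the nondegeneracy assumption $c_{1,n}\neq 0$ for $1\leq n\leq N$. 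Finally $v(t)$ is taken as a linear functional $v(t)=\sum_{n=1}^N k_n \hat\alpha_n(t)$ of the observer estimate, with gains $k_n$ obtained by finite-dimensional pole placement on the reduced $N$-dimensional heat subsystem excited through the coupling $\beta z$; stabilizability of this reduced subsystem through $v$ reduces to the nontriviality condition $\gamma_n\neq 0$ for $1\leq n\leq N$ established in Section~\ref{sec: exact controllability}.

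Exponential stability of the closed loop at rate $\delta$ is then proved via a Lyapunov functional of triangular form, combining the natural wave energy of the boundary-damped wave equation, a weighted sum $\sum_{n=1}^N \bigl(w_n|\hat\alpha_n|^2 + w_n'|\alpha_n - \hat\alpha_n|^2 \bigr)$ for the observer-controller subsystem, and the canonical $\mathcal{H}^0$-norm on the parabolic tail, which decays freely at rate $>\delta$ by Step~1. Stability in $\mathcal{H}^1$ follows by bootstrapping the regularity once stability in $\mathcal{H}^0$ is established. The main obstacle will be the two-way coupling between the finite-dimensional control loop and the infinite-dimensional wave and parabolic-tail parts: $v(t)$ enters every hyperbolic mode through the Neumann boundary, and the source term $\beta(x)z(t,x)$ feeds the wave back into every heat mode. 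The delicate point is to show that the cross-terms in the Lyapunov derivative are dominated by the diagonal dissipative terms; this is typically handled by taking $k$ large enough (so that the high-gain colocated damping absorbs the perturbation induced by $v$) and by exploiting the spectral gap $\eta$ between the slow modes and the tail to close the Lyapunov estimate with a rate of at least $\delta$.
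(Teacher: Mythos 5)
Your overall architecture coincides with the paper's: a preliminary collocated velocity feedback on the wave Neumann trace to push the whole hyperbolic spectrum below $-\delta$, followed by a finite-dimensional Luenberger observer and pole placement on the finitely many slow parabolic modes (under the conditions $\gamma_n\neq 0$ and $c_{1,n}\neq 0$ for the retained modes), and a Lyapunov functional combining a finite-dimensional quadratic form with the modal tail. However, there are two genuine gaps. First, your claim that the damping gain $k$ should be ``chosen large enough'' to make the boundary-damped wave decay faster than $\delta$ is wrong: for $\partial_x z(t,L)=-k\,\partial_t z(t,L)$ the hyperbolic eigenvalues have real part $\frac{1}{2L}\log\big|\frac{k-1}{k+1}\big|$, which tends to $0^-$ as $k\to+\infty$ (overdamping) and to $-\infty$ only as $k\to 1$. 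One must therefore take the gain close to $1$ (the paper takes $\alpha>1$ with $\frac{1}{2L}\log\frac{\alpha-1}{\alpha+1}<-\delta$), and in addition impose the non-resonance condition \eqref{eq: condition for selecting alpha} so that the shifted hyperbolic eigenvalues stay simple and disjoint from the parabolic ones --- a condition your sketch omits but which is needed for the Riesz-spectral structure of the closed-loop operator. Relatedly, once the damping is applied the relevant Riesz basis is that of the damped operator $\mathcal{A}$ (Lemma \ref{lem: A riesz spectral}), whose eigenfunctions satisfy $g'(L)+\alpha h(L)=0$, not the basis of $\mathcal{A}_0$ that you invoke.

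Second, and more substantively, your sketch does not explain how the residual control $v$, which enters through the \emph{boundary}, is to be projected onto the modes in a way that makes the tail estimates of the Lyapunov derivative converge. The naive modal reduction gives $\dot{x}_{2,m}=\lambda_{2,m}x_{2,m}+\beta_{2,m}v$ with $\beta_{2,m}=\overline{\psi_{2,m}^3(L)}/\alpha$, and $|\psi_{2,m}^3(L)|$ does not decay in $m$, so the cross-terms $\sum_m \beta_{2,m}v\,\overline{x_{2,m}}$ cannot be absorbed by Young's inequality against the diagonal dissipation. The paper's resolution is the change of variables \eqref{eq: change of variable} together with the dynamic extension $\dot v=v_d$ (treating $v$ as an additional state and $v_d$ as the new input), which replaces the unbounded control operator by the bounded vectors $a,b\in\mathcal{H}^0$ whose modal coefficients are square-summable; this is precisely what makes the series $S_{a,N,M}$, $S_{b,N,M}$ finite and vanishing as $N,M\to\infty$ in the feasibility argument for \eqref{constraints_NM}. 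Without this (or an equivalent admissibility device) your Lyapunov estimate does not close. Finally, the $\mathcal{H}^1$ statement is not obtained in the paper by ``bootstrapping regularity'' from the $\mathcal{H}^0$ result but by a separate Riesz-basis lemma for $\mathcal{H}^1$ (Lemma \ref{lemma: Riesz basis H1}) and a reweighted Lyapunov functional \eqref{eq: V H1 norm}; a regularity bootstrap is not justified for the closed-loop semigroup.
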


\smallskip

This theorem will be settled in a much more precise way further, in Section \ref{sec_main_feed}, Theorem \ref{thm: main result 2}.

The control strategy consists of first shifting the spectrum of the wave equation \eqref{eq: cascade equation - 2}, by applying a velocity feedback on the right Neumann trace \eqref{eq: cascade equation - 4}, namely, by setting
\begin{equation}\label{eq: preliminary feedback}
    u(t) 
    = -\alpha z_o(t) + v(t)
    = -\alpha \partial_t z(t,L) + v(t)
\end{equation}
where $v$ is another control to be designed, and $\alpha > 1$ is chosen such that (see Remark~\ref{rem: simple eigenvalues})
\begin{equation}\label{eq: condition for selecting alpha}
    c \neq \frac{1}{2L} \log\Big(\frac{\alpha-1}{\alpha+1}\Big) + \frac{n^2\pi^2}{L^2}  \qquad \forall n\in\mathbb{N}^{*} .
\end{equation}
When $v=0$, the output feedback $u$ given by \eqref{eq: preliminary feedback} is known to stabilize the wave equation \eqref{eq: cascade equation - 2}. The additional control $v$ is going to be used and designed from the output \eqref{eq: measurement heat distributed} in order to stabilize the full cascade system \eqref{eq: cascade equation}. Hence, with \eqref{eq: preliminary feedback}, the system is now
\begin{subequations}\label{eq: cascade equation - premilinary feedback}
    \begin{align}
        &\partial_t y(t,x) = \partial_{xx} y(t,x) + c y(t,x) + \beta(x) z(t,x) , \label{eq: cascade equation - premilinary feedback - 1} \\
        &\partial_{tt} z(t,x) = \partial_{xx} z(t,x) , \label{eq: cascade equation - premilinary feedback - 2} \\
        & y(t,0) = y(t,L) = 0 , \label{eq: cascade equation - premilinary feedback - 3} \\ 
        & z(t,0) = 0 , \qquad \partial_x z(t,L) + \alpha \partial_t z(t,L) = v(t) , \label{eq: cascade equation - premilinary feedback - 4} \\
        & y(0,x) = y_0(x) , \; z(0,x) = z_0(x) , \; \partial_t z(0,x) = z_1(x) \label{eq: cascade equation - premilinary feedback - 5}
    \end{align}
\end{subequations}
and the new control is $v$. 

To design an appropriate feedback control $v$, we follow an approach by spectral reduction developed in \cite{coron2004global, coron2006global, lhachemi2020pi, russell1978controllability} in other contexts, which will yield an effective feedback based on a finite number of modes. In turn, we will design a Lyapunov function.

In the Hilbert space $\mathcal{H}^0$ defined by \eqref{eq: state-space}, setting $\mathcal{X}=(y,z,\partial_t z)$ as in Section \ref{sec: exact controllability}, the control system \eqref{eq: cascade equation - premilinary feedback} can be written in the usual form $\dot{\mathcal{X}}(t)=\mathcal{A}\mathcal{X}(t)+\mathcal{B}v(t)$, where $\mathcal{A}:D(\mathcal{A})\rightarrow\mathcal{H}^0$ is defined by
\begin{equation*}
    \mathcal{A} = \begin{pmatrix} \partial_{xx} + c\,\mathrm{id} & \beta\,\mathrm{id} & 0 \\ 0 & 0 & \mathrm{id} \\ 0 & \partial_{xx} & 0 \end{pmatrix}
\end{equation*}
with domain
\begin{multline*}
    D(\mathcal{A}) = \big\{ (f,g,h)\in H^2(0,L) \times H^2(0,L) \times H^1(0,L)\, \mid\,  \\
    f(0)=f(L)=g(0)=h(0)=0 , \   g'(L) + \alpha h(L) = 0 \big\} ,
\end{multline*}
and the control operator $\mathcal{B}$, standing for the right Neumann-like boundary condition \eqref{eq: cascade equation - premilinary feedback - 4}, can be defined by transposition similarly to $\mathcal{B}_0$ in Appendix.

\subsection{Preliminaries: spectral properties of $\mathcal{A}$}\label{sec_prelim_A}

\begin{lemma}\label{lem: eigenstructures of A}
The eigenvalues of $\mathcal{A}$ are 
\begin{align*}
& \lambda_{1,n} = c - \frac{n^2 \pi^2}{L^2}, \quad n\in\mathbb{N}^*, \\ 
& \lambda_{2,m} = \frac{1}{2L} \log\Big(\frac{\alpha-1}{\alpha+1}\Big) + i \frac{m\pi}{L} , \quad m \in\mathbb{Z},
\end{align*}
with associated eigenvectors $\phi_{1,n}=(\phi^1_{1,n},\phi^2_{1,n},\phi^3_{1,n})$ and $\phi_{2,m}=(\phi^1_{2,m},\phi^2_{2,m},\phi^3_{2,m})$ respectively given by
\begin{align*}
\phi^1_{1,n}(x) = \sqrt{\frac{2}{L}} \sin\left(\frac{n\pi}{L}x\right), \quad \phi^2_{1,n}(x) = 0, \quad \phi^3_{1,n}(x) = 0 ,
\end{align*}
and
\begin{multline*}
\phi^1_{2,m}(x) = \frac{1}{A_m r_m} \int_x^L \beta(s) \sinh(\lambda_{2,m} s) \sinh(r_m (x-s)) \,\mathrm{d}s \\
+ \frac{\sinh(r_m (L-x))}{A_m r_m \sinh(r_m L)} \int_0^L \beta(s) \sinh(\lambda_{2,m} s) \sinh(r_m s) \,\mathrm{d}s 
\end{multline*}
where $r_m$ is a square root of $\lambda_{2,m} - c$ with nonnegative real part,
\begin{align*}
\phi^2_{2,m}(x) = \frac{1}{A_m}\sinh(\lambda_{2,m} x) , \quad \phi^3_{2,m}(x) = \frac{\lambda_{2,m}}{A_m} \sinh(\lambda_{2,m} x)  ,
\end{align*}
with $A_m = \frac{1}{L\sqrt{2\mu}} \sqrt{(\mu^2 L^2 + m^2 \pi^2)\sinh(2\mu L)}$
and $\mu = - \frac{1}{2L} \log\big(\frac{\alpha-1}{\alpha+1}\big) > 0$.
\end{lemma}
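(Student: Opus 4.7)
The plan is to mirror closely the proof of Lemma~\ref{lem: eigenstructures of A0}, tracking only the changes induced by replacing the Neumann condition $g'(L)=0$ by the ``damped'' condition $g'(L)+\alpha h(L)=0$, and then identifying the correct normalization constant $A_m$.

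First, I would solve $\mathcal{A}(f,g,h)=\lambda(f,g,h)$ for $(f,g,h)\in D(\mathcal{A})$, which means
$$
f''+cf+\beta g=\lambda f,\quad h=\lambda g,\quad g''=\lambda^2 g,\quad f(0)=f(L)=g(0)=h(0)=0,\quad g'(L)+\alpha\lambda g(L)=0,
$$
using $h=\lambda g$ to rewrite the transmission condition. The case $\lambda=0$ is identical to Lemma~\ref{lem: eigenstructures of A0}: $g''=0$ with $g(0)=g'(L)=0$ forces $g=h=0$, and then $f''+cf=0$ on $[0,L]$ with Dirichlet data produces the parabolic branch $\lambda_{1,n}=c-n^2\pi^2/L^2$ with $\phi_{1,n}^1=\sqrt{2/L}\sin(n\pi x/L)$, provided $n^2\pi^2/L^2=c$ (otherwise this case is empty).

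For $\lambda\neq 0$, the general solution of $g''=\lambda^2 g$ vanishing at $0$ is $g(x)=\delta\sinh(\lambda x)$. If $\delta=0$ one recovers the parabolic branch as before. If $\delta\neq 0$, the boundary condition becomes
$$
\cosh(\lambda L)+\alpha\sinh(\lambda L)=0\quad\Longleftrightarrow\quad e^{2\lambda L}=\frac{\alpha-1}{\alpha+1}.
$$
Since $\alpha>1$, the quotient $(\alpha-1)/(\alpha+1)$ is positive and less than $1$, so the complex logarithm gives exactly the hyperbolic branch $\lambda_{2,m}=\frac{1}{2L}\log\bigl(\frac{\alpha-1}{\alpha+1}\bigr)+\frac{im\pi}{L}$, $m\in\mathbb Z$, which has strictly negative real part $-\mu$ as claimed. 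Condition~\eqref{eq: condition for selecting alpha} guarantees $\lambda_{1,n}\neq\lambda_{2,m}$ for all $n,m$, so that the two branches are disjoint. Normalizing $\delta$ so that $\phi_{2,m}^{2}=\frac{1}{A_m}\sinh(\lambda_{2,m} x)$ and $\phi_{2,m}^{3}=\lambda g=\frac{\lambda_{2,m}}{A_m}\sinh(\lambda_{2,m} x)$ matches the stated formulas.

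Then I would recover $\phi^1_{2,m}$ by solving $f''+(c-\lambda_{2,m})f=-\beta g$ with $f(0)=f(L)=0$, using the variation-of-constants expression exactly as in Lemma~\ref{lem: eigenstructures of A0}: since $\lambda_{2,m}-c\neq 0$, letting $r_m$ be the square root with $\mathrm{Re}(r_m)\geq 0$, and observing that $\sinh(r_m L)\neq 0$ because $\lambda_{2,m}\neq c-k^2\pi^2/L^2$ (otherwise $\lambda_{2,m}$ would be real, contradicting $m^2\pi^2/L^2\neq 0$ or the excluded case in~\eqref{eq: condition for selecting alpha}), one gets the same closed-form integral representation of $\phi^1_{2,m}$.

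The only genuinely new computation is the normalization constant $A_m$, which I would fix by a direct calculation. The $\mathcal H^0$-norm of $\phi_{2,m}$ is dominated by
$$
\int_0^L |(\phi^2_{2,m})'|^2\,dx+\int_0^L |\phi^3_{2,m}|^2\,dx=\frac{|\lambda_{2,m}|^2}{A_m^2}\int_0^L\bigl(|\cosh(\lambda_{2,m} x)|^2+|\sinh(\lambda_{2,m} x)|^2\bigr)\,dx.
$$
With $\lambda_{2,m}=-\mu+im\pi/L$ the identity $|\cosh(\lambda_{2,m} x)|^2+|\sinh(\lambda_{2,m} x)|^2=\cosh(2\mu x)$ (via $\cosh^2+\sinh^2=\cosh(2\cdot)$ and $\cos^2+\sin^2=1$) reduces the integral to $\sinh(2\mu L)/(2\mu)$, so using $|\lambda_{2,m}|^2=(\mu^2 L^2+m^2\pi^2)/L^2$ one obtains exactly the announced $A_m=\frac{1}{L\sqrt{2\mu}}\sqrt{(\mu^2 L^2+m^2\pi^2)\sinh(2\mu L)}$. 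The main (minor) obstacle is this trigonometric/hyperbolic simplification; once it is done, everything else is a transcription of the argument already written for $\mathcal{A}_0$, with $\lambda_{2,m}$ shifted into the left half-plane by the damping $\alpha$.
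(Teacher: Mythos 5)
Your proposal is correct and follows essentially the same route as the paper's proof: the case distinction $\lambda=0$ / $\lambda\neq 0$, the reduction of the damped boundary condition to $e^{2\lambda L}=\frac{\alpha-1}{\alpha+1}$, and the use of condition \eqref{eq: condition for selecting alpha} to rule out $\sinh(r_m L)=0$ are exactly the paper's steps. Your explicit verification of the normalization constant $A_m$ via $\vert\cosh(\lambda_{2,m}x)\vert^2+\vert\sinh(\lambda_{2,m}x)\vert^2=\cosh(2\mu x)$ is a welcome addition that the paper leaves implicit.
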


\begin{remark}\label{rem: simple eigenvalues}
The constraint \eqref{eq: condition for selecting alpha} is introduced to avoid that the real eigenvalue $\lambda_{2,0}$ associated with the wave equation coincides with an eigenvalue $\lambda_{1,n}$ of the reaction-diffusion equation.
\end{remark}

\begin{proof}
Let $\lambda\in\mathbb{C}$ and $(f,g,h)\in D(\mathcal{A})$ be such that $\mathcal{A}(f,g,h)=\lambda (f,g,h)$, i.e., $f,g \in H^2(0,L)$ and $h\in H^1(0,L)$ such that
\begin{subequations}\label{eq: lemma eigenstructures A - system to solve}
    \begin{align}
        & f''+cf+\beta g = \lambda f , \label{eq: lemma eigenstructures A - system to solve - 1} \\
        & h = \lambda g , \label{eq: lemma eigenstructures A - system to solve - 2} \\
        & g'' = \lambda h = \lambda^2 g , \label{eq: lemma eigenstructures A - system to solve - 3} \\
        & f(0) = f(L) = g(0) = h(0) = 0 , \label{eq: lemma eigenstructures A - system to solve - 4} \\
        & g'(L) + \alpha h(L) = g'(L) + \alpha \lambda g(L) = 0 . \label{eq: lemma eigenstructures A - system to solve - 5}
    \end{align}
\end{subequations}
Assume first that $\lambda = 0$. By \eqref{eq: lemma eigenstructures A - system to solve - 3} we have $g''=0$ with $g(0) = g'(L) = 0$, implying along with \eqref{eq: lemma eigenstructures A - system to solve - 2} that $g=h=0$. Hence, from \eqref{eq: lemma eigenstructures A - system to solve - 1}, $f''+cf=0$ and $f(0)=f(L)=0$, from which we deduce that $\frac{n^2 \pi^2}{L^2} = c$ and $f(x) = \sqrt{\frac{2}{L}}\sin\left(\frac{n\pi}{L}x\right)$ for some $n\in\mathbb{N}^*$. In particular, $\lambda = c - \frac{n^2\pi^2}{L^2} = 0$.

Assume now that $\lambda \neq 0$. Using \eqref{eq: lemma eigenstructures A - system to solve - 3} and the fact that $g(0)=0$, we have $g(x) = \delta ( e^{\lambda x} - e^{-\lambda x} )$ for some $\delta\in\mathbb{R}$. Furthermore, by \eqref{eq: lemma eigenstructures A - system to solve - 5},
$0 = g'(L) + \alpha \lambda g(L) = \lambda \delta ( e^{\lambda L} + e^{-\lambda L} + \alpha ( e^{\lambda L} - e^{-\lambda L} ) )$.
Since $\lambda \neq 0$, there are two cases. If $\delta = 0$ then $g=h=0$ and, by \eqref{eq: lemma eigenstructures A - system to solve - 1}, $f''=(\lambda-c)f$ and $f(0)=f(L)=0$ hence $\lambda = c - \frac{n^2\pi^2}{L^2}$ and $f(x) = \sqrt{\frac{2}{L}}\sin\left(\frac{n\pi}{L}x\right)$ for some $n\in\mathbb{N}^*$ provided $\lambda \neq 0$. 
If $\delta \neq 0$ then $e^{\lambda L} + e^{-\lambda L} + \alpha ( e^{\lambda L} - e^{-\lambda L} ) = 0$, which is equivalent to $e^{2\lambda L} = \frac{\alpha-1}{\alpha+1}$. Since $\alpha > 1$, this gives $\lambda = \frac{1}{2L} \log\big(\frac{\alpha-1}{\alpha+1}\big) + i \frac{m\pi}{L}$ for some $m \in\mathbb{Z}$. Moreover $g(x) = \sinh(\lambda x)$ and $h(x) = \lambda \sinh(\lambda x)$, where we take (without loss of generality) $\delta=1/2$. Finally, $f''+(c-\lambda)f=-\beta(x)g$ with $f(0)=f(L)=0$. Recalling that $\alpha > 1$ is selected so that $c \neq \frac{1}{2L} \log\big(\frac{\alpha-1}{\alpha+1}\big)$, we have $\lambda-c \neq 0$. Hence, denoting by $r$ one of its two distinct square roots, i.e., $r^2 = \lambda - c$ with $r \neq 0$, we obtain
$f(x) = \left( \delta_1 - \frac{1}{2r} \int_L^x \beta(s) g(s) e^{-r s} \,\mathrm{d}s \right) e^{r x} + \left( \delta_2 + \frac{1}{2r} \int_L^x \beta(s) g(s) e^{r s} \,\mathrm{d}s \right) e^{-r x}$
for some constants $\delta_1,\delta_2\in\mathbb{C}$ that must be selected such that $f(0)=0$ and $f(L)=0$. The latter equation yields $\delta_2 = - \delta_1 e^{2rL}$ , implying that
$f(x) = 2 \delta_1 e^{rL} \sinh(r(x-L)) - \frac{1}{r} \int_L^x \beta(s) g(s) \sinh(r(x-s)) \,\mathrm{d}s$.
Then, $f(0)=0$ gives $-2 \delta_1 e^{rL} \sinh(rL) - \frac{1}{r} \int_0^L \beta(s) g(s) \sinh(rs) \,\mathrm{d}s = 0$. We note that $\sinh(rL)=0$ if and only if $e^{2rL}=1$, i.e., if and only if $2rL=2ik\pi$ for some $k\in\mathbb{Z}$. Then, we must have $\lambda = c + r^2 = c - \frac{k^2 \pi^2}{L^2}$. Since $\lambda = \frac{1}{2L} \log\big(\frac{\alpha-1}{\alpha+1}\big) + i \frac{m\pi}{L}$, this is possible only if $m=0$, which contradicts the assumption that $\alpha > 1$ has been selected so that $c \neq \frac{1}{2L} \log\big(\frac{\alpha-1}{\alpha+1}\big) + \frac{k^2\pi^2}{L^2}$ for any $k\in\mathbb{N}$. Hence, $\sinh(rL) \neq 0$, which gives $\delta_1 = - \frac{1}{2 e^{rL} r \sinh(r L)} \int_0^L \beta(s) g(s) \sinh(rs) \,\mathrm{d}s$. 
\end{proof}

\smallskip

\begin{remark}
The choice of $\alpha > 1$ satisfying \eqref{eq: condition for selecting alpha} ensures that the eigenvalues of $\mathcal{A}$ are simple.
\end{remark}

\smallskip

\begin{lemma}\label{lem: A riesz spectral}
$\Phi = \{\phi_{1,n}\, \mid\, n\in\mathbb{N}^* \} \cup \{\phi_{2,m}\, \mid\, m \in\mathbb{Z} \}$ is a Riesz basis of $\mathcal{H}^0$. Hence, $\mathcal{A}$ is a Riesz spectral operator that generates a $C_0$-semigroup.
\end{lemma}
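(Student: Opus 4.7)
The plan is to apply a Bari--Markus perturbation argument by comparing $\Phi$ to the Riesz basis associated with the decoupled operator $\mathcal{A}_d$ obtained from $\mathcal{A}$ by setting $\beta=0$. This $\mathcal{A}_d$ is block-diagonal with respect to the orthogonal decomposition
$$
\mathcal{H}^0 = \bigl(L^2(0,L)\times\{0\}\times\{0\}\bigr) \oplus \bigl(\{0\}\times H^1_{(0)}(0,L)\times L^2(0,L)\bigr):
$$
on the first factor, it acts as the Dirichlet reaction-diffusion operator, whose $L^2$-orthonormal eigenbasis is exactly $\{\phi_{1,n}\}_{n\in\mathbb{N}^*}$ (unchanged by $\beta$); on the second factor, it acts as the 1D wave operator with the viscous Neumann feedback $g'(L)+\alpha h(L)=0$, $\alpha>1$, a classical Riesz spectral operator (see \cite{curtain2012introduction,tucsnakweiss}) whose normalized eigenvectors $\phi_{2,m}^d=(0,\phi^2_{2,m},\phi^3_{2,m})$ (with $\phi^2_{2,m},\phi^3_{2,m}$ as in Lemma~\ref{lem: eigenstructures of A}) form a Riesz basis of $H^1_{(0)}\times L^2$. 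Hence $\Phi^d=\{\phi_{1,n}\}_{n\in\mathbb{N}^*}\cup\{\phi_{2,m}^d\}_{m\in\mathbb{Z}}$ is a Riesz basis of $\mathcal{H}^0$.

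The decisive step is to show that the perturbation $\phi_{2,m}-\phi_{2,m}^d=(\phi^1_{2,m},0,0)$ is square-summable. Bounding the two terms in the integral representation of $\phi^1_{2,m}$ from Lemma~\ref{lem: eigenstructures of A} separately is inconclusive, since each piece carries uncancelled exponential factors in $m$. The trick is to recombine them via the $\cosh$ product-to-sum identity
$$
\sinh(r_m(x-s))\sinh(r_m L)+\sinh(r_m(L-x))\sinh(r_m s)=\sinh(r_m(L-s))\sinh(r_m x),
$$
which, after splitting $\int_0^L=\int_0^x+\int_x^L$ in the second piece of $\phi^1_{2,m}$, puts it into the symmetric Green-function form
$$
\phi^1_{2,m}(x)=\frac{\sinh(r_m x)}{A_m r_m\sinh(r_m L)}\int_x^L\beta(s)\sinh(\lambda_{2,m}s)\sinh(r_m(L-s))\,\mathrm{d}s+\frac{\sinh(r_m(L-x))}{A_m r_m\sinh(r_m L)}\int_0^x\beta(s)\sinh(\lambda_{2,m}s)\sinh(r_m s)\,\mathrm{d}s.
$$
Using $\mathrm{Re}(\lambda_{2,m})=-\mu$ so that $|\sinh(\lambda_{2,m}s)|$ stays bounded, $r_m\sim e^{\pm i\pi/4}\sqrt{|m|\pi/L}$ so that $\mathrm{Re}(r_m)\sim|r_m|/\sqrt{2}$ and $|\sinh(r_m L)|\sim e^{\mathrm{Re}(r_m)L}/2$, and $A_m\asymp|m|$, elementary bounds give $|\phi^1_{2,m}(x)|\lesssim\|\beta\|_\infty/(A_m|r_m|\mathrm{Re}(r_m))=O(|m|^{-2})$ uniformly in $x$. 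In particular $\sum_{m\in\mathbb{Z}}\|\phi_{2,m}-\phi_{2,m}^d\|_{\mathcal{H}^0}^2<+\infty$.

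To conclude, I would invoke Bari's theorem: a sequence quadratically close to a Riesz basis, complete, and $\omega$-independent is itself a Riesz basis. The $\omega$-independence follows from the simplicity of the eigenvalues guaranteed by \eqref{eq: condition for selecting alpha}; completeness follows from the density of the span of the adjoint eigenvectors of $\mathcal{A}^*$ (computed in direct analogy with Lemma~\ref{lem: adjoint A0*}). This yields that $\Phi$ is a Riesz basis of $\mathcal{H}^0$ and hence that $\mathcal{A}$ is a Riesz spectral operator. Since all its eigenvalues have real part bounded above by $\max(c-\pi^2/L^2,-\mu)<+\infty$, standard Riesz spectral theory \cite{curtain2012introduction} then ensures that $\mathcal{A}$ generates a $C_0$-semigroup on $\mathcal{H}^0$.

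The main obstacle is the asymptotic estimate in the second step: without the trigonometric recombination, the original two-term representation of $\phi^1_{2,m}$ yields bounds of size $e^{\mathrm{Re}(r_m)x}/m^2$ that blow up in $m$. Only the symmetric Green-function form makes the exponential prefactors cancel exactly and delivers the $O(|m|^{-2})$ decay that is needed for the quadratic closeness in the Bari--Markus theorem.
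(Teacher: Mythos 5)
Your proposal follows essentially the same route as the paper's proof: comparison with the decoupled Riesz basis $\{\phi_{1,n}\}\cup\{(0,\phi^2_{2,m},\phi^3_{2,m})\}$, the same product-to-sum recombination putting $\phi^1_{2,m}$ into symmetric Green-function form, the same $O(|m|^{-2})$ uniform bound, and Bari's theorem. The only minor quibble is your justification of $\omega$-independence via simplicity of the eigenvalues; the paper gets it more directly from the block structure (a vanishing combination forces the $b_m$ to vanish because $\{(\phi^2_{2,m},\phi^3_{2,m})\}$ is a Riesz basis of the wave component, and then the $a_n$ vanish), but this does not affect correctness.
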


\smallskip

\begin{proof}
Noting that $\{\phi^1_{1,n}\, \mid\, n\in\mathbb{N}^* \}$ is a Hilbert basis of $L^2(0,1)$ and that $\{ (\phi^2_{2,m},\phi^3_{2,m})\, \mid\, m\in\mathbb{Z} \}$ is a Riesz basis of $H_{(0)}^1(0,L) \times L^2(0,L)$ we infer, first, that $\Phi$ is $\omega$-linearly independent, and second, defining $\tilde{\phi}_{1,n} = \phi_{1,n}$ and $\tilde{\phi}_{2,m} = (0,\phi^2_{2,m},\phi^3_{2,m})$, that $\tilde{\Phi} = \{\tilde{\phi}_{1,n}\, \mid\, n\in\mathbb{N}^* \} \cup \{\tilde{\phi}_{2,m}\, \mid\, m \in\mathbb{Z} \}$ is a Riesz basis of $\mathcal{H}^0$. By Bari's theorem \cite{gohberg1978introduction}, it then follows that $\Phi$ is a Riesz basis provided that
\begin{equation}\label{bari_ineq}
\sum_{n\in\mathbb{N}^*} \Vert \phi_{1,n} - \tilde{\phi}_{1,n} \Vert_{\mathcal{H}^0}^2 + \sum_{m\in\mathbb{Z}} \Vert \phi_{2,m} - \tilde{\phi}_{2,m} \Vert_{\mathcal{H}^0}^2 
 = \sum_{m\in\mathbb{Z}} \Vert \phi^1_{2,m} \Vert_{L^2}^2
< \infty .
\end{equation}
To prove \eqref{bari_ineq}, we first note that $\phi^1_{2,m} = f_{m,1} + f_{m,2}$ with 
\begin{align*}
& f_{m,1}(x) = \frac{\sinh(r_m (L-x))}{A_m r_m \sinh(r_m L)}  \int_0^x \beta(s) \sinh(\lambda_{2,m} s) \sinh(r_m s) \,\mathrm{d}s , \\
& f_{m,2}(x) = \frac{1}{A_m r_m \sinh(r_m L)} \times \\
& \quad \int_x^L \beta(s) \sinh(\lambda_{2,m} s) \big( \sinh(r_m (L-x) ) \sinh(r_m s)  - \sinh(r_m L) \sinh(r_m (s-x)) \big) \,\mathrm{d}s .
\end{align*}
We study the two terms separately. Since $r_m^2 = \lambda_{2,m} - c = \frac{1}{2L} \log\big(\frac{\alpha-1}{\alpha+1}\big) - c  + i \frac{m\pi}{L}$ with $\operatorname{Re}(r_m)\geq 0$, we infer that $\vert r_m \vert \sim \sqrt{\frac{\vert m \vert \pi}{L}}$ and $\operatorname{Re} r_m \sim \sqrt{\frac{\vert m \vert \pi}{2L}}$ as $\vert m \vert \rightarrow +\infty$. Recall that $\vert \sinh(\operatorname{Re} z) \vert \leq \vert \cosh(z) \vert , \vert \sinh(z) \vert \leq \vert \cosh(\operatorname{Re} z) \vert$ for any $z \in \mathbb{C}$, and $\cosh(x) \leq e^x$ and $\frac{1}{2}(e^x-1) \leq \sinh(x) \leq e^x /2$ for any $x \geq 0$. For $\vert m \vert$ large, we obtain
\begin{align*}
\vert f_{m,1}(x) \vert 
& \leq \frac{\vert \sinh(r_m (L-x)) \vert}{A_m \vert r_m \vert \vert \sinh(r_m L) \vert}  \int_0^x \vert \beta(s) \vert \vert \sinh(\lambda_{2,m} s) \vert \vert \sinh(r_m s) \vert \,\mathrm{d}s \\
& \leq \frac{\Vert \beta \Vert_{L^\infty} \cosh(\operatorname{Re}r_m (L-x))}{A_m \vert r_m \vert \sinh(\operatorname{Re}r_m L)} \int_0^x \cosh(\operatorname{Re}\lambda_{2,m} s) \cosh(\operatorname{Re}r_m s) \,\mathrm{d}s \\
& \leq \frac{2\Vert \beta \Vert_{L^\infty} \cosh(\mu L) e^{\operatorname{Re}r_m (L-x)}}{A_m \vert r_m \vert (e^{\operatorname{Re}r_m L}-1)} \int_0^x \cosh(\operatorname{Re}r_m s) \,\mathrm{d}s \\
& \leq \frac{2\Vert \beta \Vert_{L^\infty} \cosh(\mu L) e^{\operatorname{Re}r_m (L-x)}}{A_m \vert r_m \vert (e^{\operatorname{Re}r_m L}-1)} \times \frac{\sinh(\operatorname{Re}r_m x)}{\operatorname{Re}r_m} \\
& \leq \frac{\Vert \beta \Vert_{L^\infty} \cosh(\mu L) e^{\operatorname{Re}r_m (L-x)}}{A_m \vert r_m \vert (e^{\operatorname{Re}r_m L}-1)} \times \frac{e^{\operatorname{Re}r_m x}}{\operatorname{Re}r_m} \\
& \leq \Vert \beta \Vert_{L^\infty} \cosh(\mu L) \frac{e^{\operatorname{Re}r_m L}}{e^{\operatorname{Re}r_m L}-1} \times \frac{1}{A_m \vert r_m \vert \operatorname{Re}r_m}
\end{align*}
which shows that $\Vert f_{m,1} \Vert_{L^\infty} = O(1/m^2)$ as $\vert m \vert \rightarrow +\infty$. For the second term, noting that $\sinh(r_m (L-x) ) \sinh(r_m s) - \sinh(r_m L) \sinh(r_m (s-x)) = \sinh(r_m x) \sinh(r_m(L-s))$, we have
\begin{align*}
\vert f_{m,2}(x) \vert 
&\leq \frac{1}{A_m \vert r_m \vert \vert \sinh(r_m L) \vert}  \int_x^L \vert \beta(s) \vert \vert \sinh(\lambda_{2,m} s) \vert \vert \sinh(r_m x) \vert \vert \sinh(r_m(L-s)) \vert \,\mathrm{d}s \\
& \leq \frac{\Vert \beta \Vert_{L^\infty}}{A_m \vert r_m \vert \sinh(\operatorname{Re}r_m L)}  \int_x^L \cosh( \operatorname{Re}\lambda_{2,m} s) \cosh(\operatorname{Re}r_m x) \cosh(\operatorname{Re}r_m(L-s)) \,\mathrm{d}s  \\
& \leq \frac{2 \Vert \beta \Vert_{L^\infty} \cosh( \mu L) e^{\operatorname{Re}r_m x}}{A_m \vert r_m \vert ( e^{\operatorname{Re}r_m L}-1)} \int_x^L e^{\operatorname{Re}r_m(L-s)} \,\mathrm{d}s  \\
& \leq \frac{2 \Vert \beta \Vert_{L^\infty} \cosh( \mu L) e^{\operatorname{Re}r_m x}}{A_m \vert r_m \vert ( e^{\operatorname{Re}r_m L}-1)} \times \frac{e^{\operatorname{Re}r_m(L-x)}}{\operatorname{Re}r_m} \\  
& \leq 2 \Vert \beta \Vert_{L^\infty} \cosh( \mu L) \frac{e^{\operatorname{Re}r_m L}}{ e^{\operatorname{Re}r_m L}-1} \times \frac{1}{A_m \vert r_m \vert \operatorname{Re}r_m}
\end{align*}
which shows that $\Vert f_{m,2} \Vert_{L^\infty} = O(1/m^2)$ as $\vert m \vert \rightarrow +\infty$. This completes the proof.
\end{proof}

\smallskip

%

\begin{lemma}\label{lem: dual basis}
The adjoint operator $\mathcal{A}^*$ is given by the same differential matrix operator as $\mathcal{A}_0^*$ in Lemma \ref{lem: adjoint A0*}, but defined on the different domain
\begin{multline*}
D(\mathcal{A^*}) = \{ (f,g,h)\in H^2(0,L) \times H^2(0,L) \times H^1(0,L) \,\mid\,  \\
 f(0)=f(L)=g(0)=h(0)=0 , \ g'(L) - \alpha h(L) = 0 \} .
\end{multline*}
Its eigenfunctions are given by the dual Riesz basis $\Psi = \{\psi_{1,n}\, \mid\, n\in\mathbb{N}^* \} \cup \{\psi_{2,m}\, \mid\, m \in\mathbb{Z} \}$ of $\Phi$, associated with the eigenvalues 
\begin{align*}
& \mu_{1,n} = \lambda_{1,n} = c - \frac{n^2 \pi^2}{L^2}, \qquad n\in\mathbb{N}^* , \\ 
& \mu_{2,m} = \bar\lambda_{2,m} = \frac{1}{2L} \log\big(\frac{\alpha-1}{\alpha+1}\big) - i \frac{m\pi}{L} , \qquad m \in\mathbb{Z} ,
\end{align*}
where, setting $\psi_{1,n}=(\psi^1_{1,n},\psi^2_{1,n},\psi^3_{1,n})$ and $\psi_{2,m}=(\psi^1_{2,m},\psi^2_{2,m},\psi^3_{2,m})$,
\begin{align*}
& \psi^1_{1,n}(x) = \sqrt{\frac{2}{L}} \sin\left(\frac{n\pi}{L}x\right) , \\ 
& \psi^2_{1,n}(x) =  - \frac{\gamma_n}{\lambda_{1,n}^2 \left( \cosh(\lambda_{1,n} L) + \alpha \sinh(\lambda_{1,n} L) \right) } \sqrt{\frac{2}{L}}  \big( \cosh(\lambda_{1,n} (x-L)) - \alpha \sinh(\lambda_{1,n} (x-L)) \big) \\
& \qquad - \frac{1}{\lambda_{1,n}^2} \sqrt{\frac{2}{L}} \int_x^L \beta(s) \sin\left(\frac{n\pi}{L}s\right) \sinh(\lambda_{1,n} (x-s)) \,\mathrm{d}s   - \frac{1}{\lambda_{1,n}} \sqrt{\frac{2}{L}} \int_0^x \int_L^\tau \beta(s) \sin\left(\frac{n\pi}{L}s\right) \,\mathrm{d}s\,\mathrm{d}\tau , \\ 
& \psi^3_{1,n}(x) = \frac{\gamma_n}{\lambda_{1,n} \left( \cosh(\lambda_{1,n} L) + \alpha \sinh(\lambda_{1,n} L) \right) } \sqrt{\frac{2}{L}} \big( \cosh(\lambda_{1,n} (x-L)) - \alpha \sinh(\lambda_{1,n} (x-L)) \big) \\
& \qquad + \frac{1}{\lambda_{1,n}} \sqrt{\frac{2}{L}} \int_x^L \beta(s) \sin\left(\frac{n\pi}{L}s\right) \sinh(\lambda_{1,n} (x-s)) \,\mathrm{d}s ,
\end{align*}
with $\gamma_n = \int_0^L \beta(s) \sin\left(\frac{n\pi}{L}s\right) \sinh(\lambda_{1,n} s) \,\mathrm{d}s$ whenever $\frac{n^2 \pi^2}{L^2}\neq c$, i.e., when $\lambda_{1,n} \neq 0$, and
\begin{align*}
\psi^1_{1,n}(x) = & \sqrt{\frac{2}{L}} \sin\left(\frac{n\pi}{L}x\right) , \quad
\psi^2_{1,n}(x) =  \alpha\gamma_n \sqrt{\frac{2}{L}} x , \quad
\psi^3_{1,n}(x) =  \sqrt{\frac{2}{L}} \int_0^x \int_\tau^L \beta(s) \sin\left(\frac{n\pi}{L}s\right) \,\mathrm{d}s\,\mathrm{d}\tau ,
\end{align*}
with $\gamma_n = \int_0^L \int_\tau^L \beta(s) \sin\left(\frac{n\pi}{L}s\right) \,\mathrm{d}s\,\mathrm{d}\tau$ whenever $\frac{n^2 \pi^2}{L^2} = c$, i.e., when $\lambda_{1,n} = 0$, and
\begin{align*}
& \psi^1_{2,m}(x) = 0 , \quad
\psi^2_{2,m}(x) = \frac{A_m}{L (\bar{\lambda}_{2,m})^2} \sinh(\bar{\lambda}_{2,m} x) , \quad
 \psi^3_{2,m}(x) = - \frac{A_m}{L \bar{\lambda}_{2,m}} \sinh(\bar{\lambda}_{2,m} x) .
\end{align*}
The eigenvectors satisfy $\mathcal{A}^* \psi_{1,n} = \lambda_{1,n}\psi_{1,n}$ and $\mathcal{A}^* \psi_{2,m} = \bar{\lambda}_{2,m}\psi_{2,m}$ and have been normalized so that $\langle \phi_{1,n} , \psi_{1,n} \rangle = 1$ and $\langle \phi_{2,m} , \psi_{2,m} \rangle = 1$ for all $n\in\mathbb{N}^*$ and $m\in\mathbb{Z}$.
\end{lemma}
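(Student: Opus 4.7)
The plan is to follow the same two-step strategy already used for Lemma~\ref{lem: adjoint A0*}: first identify $\mathcal{A}^*$ and its domain by integration by parts, then solve the adjoint eigenvalue problem along the lines of Lemma~\ref{lem: eigenstructures of A} and fix the normalization using biorthogonality.

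For the identification of $\mathcal{A}^*$, I would pair $\mathcal{A}(f_1,g_1,h_1)$ with a candidate $(f_2,g_2,h_2)\in H^2(0,L)\times H^2(0,L)\times H^1(0,L)$ satisfying $f_2(0)=f_2(L)=g_2(0)=h_2(0)=0$, and integrate by parts in each component. The Dirichlet term $\int_0^L f_1''\overline{f_2}$ becomes $\int_0^L f_1\overline{f_2''}$; the coupling term $\int_0^L\beta g_1\overline{f_2}$ is, after two integrations by parts and using $(P_\beta f_2)(0)=(P_\beta f_2)'(L)=0$ together with $g_1(0)=0$, equal to $\int_0^L g_1'\overline{(P_\beta f_2)'}$; while $\int_0^L h_1'\overline{g_2'}$ and $\int_0^L g_1''\overline{h_2}$ shift the derivatives onto $g_2$ and $h_2$ respectively, producing the bulk expression $\langle (f_1,g_1,h_1),(f_2''+cf_2,\,P_\beta f_2-h_2,\,-g_2'')\rangle$ together with residual boundary contributions at $x=L$. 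Substituting the feedback constraint $g_1'(L)=-\alpha h_1(L)$ available in $D(\mathcal{A})$, those boundary terms collapse to $h_1(L)\overline{g_2'(L)-\alpha h_2(L)}$. Since $h_1(L)\in\mathbb{C}$ is free in $D(\mathcal{A})$, the adjoint identity holds for every $(f_1,g_1,h_1)$ if and only if $g_2'(L)-\alpha h_2(L)=0$, which is precisely the additional boundary condition declared for $D(\mathcal{A}^*)$.

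For the eigenelements, I would solve $\mathcal{A}^*(f,g,h)=\mu(f,g,h)$ in the two familiar cases. When $f\equiv 0$, eliminating $h=-\mu g$ reduces the system to $g''=\mu^2 g$ with $g(0)=0$ and the dissipativity-like condition $g'(L)+\alpha\mu g(L)=0$, yielding $\cosh(\mu L)+\alpha\sinh(\mu L)=0$, i.e., $e^{2\mu L}=\frac{\alpha-1}{\alpha+1}$. Since $\alpha>1$, this gives $\mu=\frac{1}{2L}\log(\frac{\alpha-1}{\alpha+1})+ik\pi/L$ for $k\in\mathbb{Z}$, and re-indexing $m=-k$ identifies this family with $\{\bar\lambda_{2,m}\}_{m\in\mathbb{Z}}$; choosing the scaling $\delta=A_m/(L\bar\lambda_{2,m}^{2})$ delivers the stated $\psi^2_{2,m}$ and $\psi^3_{2,m}$. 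When $f\not\equiv 0$, the equation $f''+(c-\mu)f=0$ with Dirichlet data forces $\mu=\lambda_{1,n}$ and $f=\sqrt{2/L}\sin(n\pi x/L)$; substituting $h=P_\beta f-\mu g$ into $-g''=\mu h$ yields the inhomogeneous ODE $g''-\lambda_{1,n}^2 g=-\lambda_{1,n}P_\beta f$ when $\lambda_{1,n}\neq 0$, to be solved with $g(0)=0$ and $g'(L)+\alpha\lambda_{1,n}g(L)=\alpha P_\beta f(L)$. Variation of parameters against $\{\sinh(\lambda_{1,n}x),\cosh(\lambda_{1,n}x)\}$ gives the closed form, and the integration constant fixed by the mixed boundary condition at $L$ generates the denominator $\cosh(\lambda_{1,n}L)+\alpha\sinh(\lambda_{1,n}L)$ appearing in $\psi^2_{1,n}$; then $\psi^3_{1,n}=P_\beta f-\lambda_{1,n}g$ follows from the second equation. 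The degenerate case $\lambda_{1,n}=0$ is handled by direct integration of $g''=0$ with the adapted boundary conditions, producing the alternative expressions in the statement.

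Finally, for the normalization, since $\phi^2_{1,n}=\phi^3_{1,n}=0$, the identity $\langle\phi_{1,n},\psi_{1,n}\rangle=1$ reduces to $\int_0^L\frac{2}{L}\sin^2(n\pi x/L)\,\mathrm{d}x=1$, while a direct expansion using $\overline{\bar\lambda_{2,m}}=\lambda_{2,m}$ and the hyperbolic Pythagorean identity gives $\langle\phi_{2,m},\psi_{2,m}\rangle=\frac{1}{L}\int_0^L(\cosh^2(\lambda_{2,m}x)-\sinh^2(\lambda_{2,m}x))\,\mathrm{d}x=1$. Assumption \eqref{eq: condition for selecting alpha} guarantees that the parabolic and hyperbolic spectra do not collide, so every eigenvalue of $\mathcal{A}$ is simple; combined with Lemma~\ref{lem: A riesz spectral}, the dual Riesz basis $\Psi$ of $\Phi$ is uniquely determined by biorthogonality and must therefore coincide with the eigensystem of $\mathcal{A}^*$ computed above. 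The main bookkeeping burden of the proof lies in the variation-of-parameters step for $\psi^2_{1,n}$ and in tracking how the Neumann-feedback boundary condition at $x=L$ enters through the $\alpha$-dependent factor $\cosh(\lambda_{1,n}L)+\alpha\sinh(\lambda_{1,n}L)$; this is exactly what distinguishes the present formulas from those of Lemma~\ref{lem: adjoint A0*}.
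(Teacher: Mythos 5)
Your proof is correct and follows exactly the strategy the paper itself invokes: the paper's own proof is a two-line reference to the proofs of Lemmas~\ref{lem: adjoint A0*} and~\ref{lem: eigenstructures of A}, whereas you supply in full the integration-by-parts identification of $D(\mathcal{A}^*)$ (correctly isolating the boundary residue $h_1(L)\overline{g_2'(L)-\alpha h_2(L)}$), the case split $f\equiv 0$ versus $f\not\equiv 0$, the variation-of-parameters solution, and the biorthogonality normalization. The one detail worth stating explicitly, which the paper does flag, is that condition \eqref{eq: condition for selecting alpha} is precisely what guarantees $\cosh(\lambda_{1,n}L)+\alpha\sinh(\lambda_{1,n}L)\neq 0$, so that the boundary-value problem determining $\psi^2_{1,n}$ is uniquely solvable; your appeal to the simplicity of the eigenvalues covers this implicitly.
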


\smallskip

\begin{proof}
We proceed similarly as in the proofs of Lemmas \ref{lem: adjoint A0*} and \ref{lem: eigenstructures of A}, studying separately the cases $\lambda_{1,n} = 0$ and $\lambda_{1,n} \neq 0$. Note that, in the latter case, the introduced eigenfunctions are well defined because, by \eqref{eq: condition for selecting alpha}, $\cosh(\lambda_{1,n} L) + \alpha \sinh(\lambda_{1,n} L) \neq 0$. We do not provide further details.
\end{proof}

\smallskip

Lemma~\ref{lem: A riesz spectral} shows that, like $\mathcal{A}_0$, $\mathcal{A}$ is a Riesz operator. This allows us to perform stabilization in $L^2 \times H^1 \times L^2$ norm. But actually, the stabilization of the parabolic part of the system can also be achieved in $H^1$ norm thanks to the following result.

\smallskip

\begin{lemma}\label{lemma: Riesz basis H1}
	$\Phi^1 = \{\frac{L}{n\pi}\phi_{1,n}\, \mid\, n\in\mathbb{N}^* \} \cup \{\phi_{2,m}\, \mid\, m \in\mathbb{Z} \}$ is a Riesz basis of the Hilbert space
	\begin{subequations}\label{eq: space H1}
	\begin{equation}
		\mathcal{H}^1 = H_0^1(0,L) \times H_{(0)}^1(0,L) \times L^2(0,L)
	\end{equation}
	endowed with the inner product 
	\begin{equation}
        \langle (f_1,g_1,h_1) , (f_2,g_2,h_2) \rangle = \int_0^L (f_1'\overline{f_2'} + g'_1 \overline{g'_2} + h_1\overline{h_2} ) .
    \end{equation}
\end{subequations}
\end{lemma}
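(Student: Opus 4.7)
The plan is to apply Bari's theorem \cite{gohberg1978introduction}, following the same strategy as in the proof of Lemma \ref{lem: A riesz spectral} but adapted to the higher-regularity space $\mathcal{H}^1$. I would introduce
\begin{equation*}
\tilde{\phi}_{1,n} = \tfrac{L}{n\pi}\phi_{1,n}, \qquad \tilde{\phi}_{2,m} = (0,\phi^2_{2,m},\phi^3_{2,m}),
\end{equation*}
and first argue that $\tilde{\Phi}^1 = \{\tilde{\phi}_{1,n}\, \mid\, n\in\mathbb{N}^*\}\cup\{\tilde{\phi}_{2,m}\, \mid\, m\in\mathbb{Z}\}$ is a Riesz basis of $\mathcal{H}^1$. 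Indeed, the inner product on $\mathcal{H}^1$ is the direct sum of $(f_1,f_2)\mapsto\int_0^L f_1'\overline{f_2'}$ on the first factor and of the inner product of $H_{(0)}^1(0,L)\times L^2(0,L)$ on the last two. The family $\{\tfrac{L}{n\pi}\sqrt{2/L}\sin(n\pi x/L)\}_{n\geq 1}$ is a Hilbert basis of $H_0^1(0,L)$ for the first inner product (these are the $L^2$-orthonormal Dirichlet-Laplace eigenfunctions divided by the square roots of their eigenvalues), while $\{(\phi^2_{2,m},\phi^3_{2,m})\}_{m\in\mathbb{Z}}$ is a Riesz basis of $H_{(0)}^1(0,L)\times L^2(0,L)$ as already used in Lemma \ref{lem: A riesz spectral}.

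Next, $\Phi^1$ is $\omega$-linearly independent in $\mathcal{H}^1$ because $\mathcal{H}^1\hookrightarrow\mathcal{H}^0$ continuously and $\Phi$ is a Riesz basis of $\mathcal{H}^0$ by Lemma \ref{lem: A riesz spectral}. By Bari's theorem, it then suffices to establish
\begin{equation*}
\sum_{n\in\mathbb{N}^*}\Bigl\|\tfrac{L}{n\pi}\phi_{1,n}-\tilde{\phi}_{1,n}\Bigr\|_{\mathcal{H}^1}^2 + \sum_{m\in\mathbb{Z}}\|\phi_{2,m}-\tilde{\phi}_{2,m}\|_{\mathcal{H}^1}^2 = \sum_{m\in\mathbb{Z}}\|\phi^1_{2,m}\|_{H_0^1}^2 < +\infty,
\end{equation*}
the parabolic contribution vanishing since $\phi^2_{1,n}=\phi^3_{1,n}=0$. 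This is strictly stronger than the $L^\infty$ bound derived in the proof of Lemma \ref{lem: A riesz spectral}: we now need a sharp estimate on $\|(\phi^1_{2,m})'\|_{L^2}$.

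To obtain it, I would rewrite $\phi^1_{2,m}$ in the symmetric Green's function form
\begin{equation*}
\phi^1_{2,m}(x) = \frac{\sinh(r_m(L-x))K_1(x) + \sinh(r_m x)K_2(x)}{A_m r_m \sinh(r_m L)},
\end{equation*}
with $K_1(x) = \int_0^x \beta(s)\sinh(\lambda_{2,m} s)\sinh(r_m s)\,\mathrm{d}s$ and $K_2(x) = \int_x^L \beta(s)\sinh(\lambda_{2,m} s)\sinh(r_m(L-s))\,\mathrm{d}s$, using the identity $\sinh(r_m(L-x))\sinh(r_m s) - \sinh(r_m L)\sinh(r_m(s-x)) = \sinh(r_m x)\sinh(r_m(L-s))$ already invoked in the proof of Lemma \ref{lem: A riesz spectral}. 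Direct differentiation, in which the pointwise $\beta(x)$ contributions cancel exactly, yields
\begin{equation*}
(\phi^1_{2,m})'(x) = \frac{\cosh(r_m x)K_2(x) - \cosh(r_m(L-x))K_1(x)}{A_m \sinh(r_m L)}.
\end{equation*}
Using $|\sinh z|\leq\cosh(\operatorname{Re}z)$, $|\cosh z|\leq e^{|\operatorname{Re}z|}$, the lower bound $|\sinh(r_m L)|\geq \sinh(\operatorname{Re}r_m L)$ and the asymptotics $A_m\sim C|m|$ and $\operatorname{Re}r_m\sim c'\sqrt{|m|}$ from the proof of Lemma \ref{lem: A riesz spectral}, a short computation gives $\|(\phi^1_{2,m})'\|_{L^\infty} = O\bigl((A_m\operatorname{Re}r_m)^{-1}\bigr) = O(|m|^{-3/2})$, hence $\|\phi^1_{2,m}\|_{H_0^1}^2 = O(|m|^{-3})$ and the Bari series converges.

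The main obstacle is this sharper $O(|m|^{-3/2})$ decay of the $H^1$ seminorm. A naive termwise differentiation of the two pieces of $\phi^1_{2,m}$ as split in the proof of Lemma \ref{lem: A riesz spectral} produces a spurious $e^{\operatorname{Re}r_m L}$ factor, so the symmetric Green's function reformulation above, and the resulting exact cancellation of the $\beta(x)$ boundary terms when differentiating, is essential to recover a decay rate fast enough for Bari's theorem to apply.
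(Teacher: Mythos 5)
Your proposal is correct and follows essentially the same route as the paper: Bari's theorem applied to a comparison Riesz basis of $\mathcal{H}^1$ that differs from $\Phi^1$ only in the first components of the hyperbolic vectors, reducing everything to $\sum_{m}\Vert(\phi^1_{2,m})'\Vert_{L^2}^2<\infty$ via the bound $\Vert(\phi^1_{2,m})'\Vert_{L^\infty}=O(\vert m\vert^{-3/2})$ (the paper obtains its comparison basis by transporting $\Phi$ through the isometry $J=(\sqrt{-\triangle},\mathrm{id},\mathrm{id})$, you build it directly as a product basis; both are fine). One small remark: the cancellation of the $\beta(x)$ boundary terms is not actually essential, since the paper's split $f_{m,1}+f_{m,2}$ already incorporates the identity $\sinh(r_m(L-x))\sinh(r_m s)-\sinh(r_mL)\sinh(r_m(s-x))=\sinh(r_mx)\sinh(r_m(L-s))$, and each boundary term arising from termwise differentiation is then individually $O\big((A_m\vert r_m\vert)^{-1}\big)=O(\vert m\vert^{-3/2})$ because $\vert\sinh(r_m(L-x))\sinh(r_mx)\vert/\vert\sinh(r_mL)\vert=O(1)$.
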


\begin{proof}
Denoting by $\triangle:H^1_0(0,L) \cap H^2(0,L)\rightarrow L^2(0,L)$ the usual Dirichlet operator, the operator
$J  :  \mathcal{H}^1 \rightarrow \mathcal{H}^0$ defined by $J(f,g,h)=(\sqrt{-\triangle}f , g ,h)$ is easily seen to be a surjective isometry. 
Hence $\{J^{-1}\phi_{1,n}\, \mid\, n\in\mathbb{N}^* \} \cup \{J^{-1}\phi_{2,m}\, \mid\, m \in\mathbb{Z} \}$ is a Riesz basis of $\mathcal{H}^1$, and we have $J^{-1}\phi_{1,n} = \frac{L}{n\pi} \phi_{1,n}$ and $J^{-1}\phi_{2,m} = ( (\sqrt{-\triangle})^{-1} \phi_{2,m}^1 , \phi_{2,m}^2 , \phi_{2,m}^3 )$. To use the Bari theorem, we have to show that
\begin{equation*}
    \sum_{m\in\mathbb{Z}} \Vert \phi^1_{2,m} - (\sqrt{-\triangle})^{-1}\phi^1_{2,m} \Vert_{H_0^1}^2
    < +\infty .
\end{equation*}
Since $\Vert (\sqrt{-\triangle})^{-1}\phi^1_{2,m} \Vert_{H_0^1} = \Vert \phi^1_{2,m} \Vert_{L^2}$, it is sufficient to show that $\sum_{m\in\mathbb{Z}} \Vert \phi^1_{2,m} \Vert_{H_0^1}^2 < +\infty$. Following the proof of Lemma~\ref{lem: A riesz spectral}, we have $\phi_{2,m}^1 = f_{m,1} + f_{m_2}$ with $\Vert f_{m,1}' \Vert_{L^\infty} = O(1/\vert m \vert^{3/2})$ and $\Vert f_{m,2}' \Vert_{L^\infty} = O(1/\vert m \vert^{3/2})$. The lemma follows. 
\end{proof}

\subsection{Finite-dimensional reduced model}
\noindent 
In order to work with an homogeneous representation of \eqref{eq: cascade equation - premilinary feedback}, we make the change of variables
\begin{subequations}\label{eq: change of variable}
\begin{align}
& w^1(t,x) = y(t,x) , \quad w^2(t,x) = z(t,x) , \\
& w^3(t,x) = \partial_t z(t,x) - \frac{x}{\alpha L} v(t) .
\end{align}
\end{subequations}
Assuming that $v$ is of class $\mathcal{C}^1$ (this assumption will be fulfilled by the upcoming adopted feedback control strategy), the control system becomes
\begin{subequations}\label{eq: cascade equation - homonegenous}
\begin{align}
& \partial_t w^1 = \partial_{xx} w^1 + c w^1 + \beta w^2 , \\
& \partial_t w^2 = w^3 + \frac{x}{\alpha L} v , \\
& \partial_t w^3 = \partial_{xx} w^2 - \frac{x}{\alpha L} \dot{v} , \\
& w^1(t,0) = w^1(t,L) = 0 , \\ 
& w^2(t,0) = 0 , \qquad \partial_x w^2(t,L) + \alpha w^3(t,L) = 0 ,\\
& w^1(0,x) = y_0(x) , \qquad w^2(0,x) = z_0(x) , \\ 
& w^3(0,x) = z_1(x) - \frac{x}{\alpha L} v(0) .
\end{align}
\end{subequations}
Setting $\mathcal{W}(t)=\left(w^1(t,\cdot),w^2(t,\cdot),w^3(t,\cdot)\right)$, $a(x) = (0,\frac{x}{\alpha L},0) \in\mathcal{H}^0$ and $b(x) = (0,0,-\frac{x}{\alpha L}) \in\mathcal{H}^0$, we have
\begin{subequations}\label{eq: cascade equation - homonegenous abstract}
\begin{align}
\dot{\mathcal{W}}(t) & = \mathcal{A} \mathcal{W}(t) + a v(t) + b \dot v(t) ,\\
\mathcal{W}(0,x) & = \left( y_0(x) , z_0(x) , z_1(x) - \dfrac{x}{\alpha L} v(0) \right) .
\end{align}
\end{subequations}
We now use the Riesz basis $\Phi$ and $\Psi$ defined in Section \ref{sec_prelim_A}, expanding
\begin{equation}\label{eq: projection system trajectory into Riesz basis}
\mathcal{W}(t,\cdot) = \sum_{n\in\mathbb{N}^*} w_{1,n}(t) \phi_{1,n} + \sum_{m \in\mathbb{Z}} w_{2,m}(t) \phi_{2,m} 
\end{equation}
with $w_{1,n} = \langle \mathcal{W}(t,\cdot) , \psi_{1,n} \rangle$, $w_{2,m} = \langle \mathcal{W}(t,\cdot) , \psi_{2,m} \rangle$, $a_{1,n} = \langle a , \psi_{1,n} \rangle$, $a_{2,m} = \langle a , \psi_{2,m} \rangle$, $b_{1,n} = \langle b , \psi_{1,n} \rangle$ and $b_{2,m} = \langle b , \psi_{2,m} \rangle$ for all $n\in\mathbb{N}^*$ and $m \in\mathbb{Z}$. 
Defining the new control $v_d = \dot{v}$ (auxiliary input for control design), we thus have
\begin{subequations}\label{eq: spectral reduction}
\begin{align}
\dot{w}_{1,n} & = \lambda_{1,n} w_{1,n} + a_{1,n} v + b_{1,n} v_d , \quad n\in\mathbb{N}^* , \label{eq: spectral reductiona} \\
\dot{w}_{2,m} & = \lambda_{2,m} w_{2,m} + a_{2,m} v + b_{2,m} v_d , \quad m \in\mathbb{Z}, \label{eq: spectral reductionb}
\end{align}
\end{subequations}
and
\begin{equation}\label{eq: inegral action}
\dot{v} = v_d .
\end{equation}
For a given $\delta >0$. let us choose an integer $N_0 \in\mathbb{N}^*$, large enough so that
$$
\lambda_{1,n} < -\delta < 0 \qquad\forall n \geq N_0 +1.
$$
We next consider the finite-dimensional system consisting of the $N_0$ first modes of the plant \eqref{eq: spectral reductiona} associated with the eigenvalues of the reaction-diffusion equation. Setting
\begin{align*}
W_0 & = \begin{pmatrix}
w_{1,1} & w_{1,2} & \ldots & w_{1,N_0}
\end{pmatrix}^\top \in\mathbb{R}^{N_0} , \\
A_0 & = \mathrm{diag}\left(
\lambda_{1,1} , \lambda_{1,2} , \ldots , \lambda_{1,N_0}
\right) \in\mathbb{R}^{N_0 \times N_0} , \\
B_{a,0} & = \begin{pmatrix}
a_{1,1} & a_{1,2} & \ldots & a_{1,N_0}
\end{pmatrix}^\top \in\mathbb{R}^{N_0} , \\
B_{b,0} & = \begin{pmatrix}
b_{1,1} & b_{1,2} & \ldots & b_{1,N_0}
\end{pmatrix}^\top \in\mathbb{R}^{N_0} ,
\end{align*}
we have 
\begin{equation}\label{truncature:eq}
\dot{W}_0(t) = A_0 W_0(t) + B_{a,0} v(t) + B_{b,0} v_d(t) .
\end{equation}
Augmenting the state vector and the matrices by setting
\begin{equation*}
W_1 = \begin{pmatrix} v \\ W_0 \end{pmatrix} , \quad
A_1 = \begin{pmatrix} 0 & 0 \\ B_{a,0} & A_0 \end{pmatrix} , \quad
B_1 = \begin{pmatrix} 1 \\ B_{b,0} \end{pmatrix} ,
\end{equation*}
we deduce that
\begin{equation}\label{truncature:eq:aug}
\dot{W}_1(t) = A_1 W_1(t) + B_1 v_d(t) .
\end{equation}
Since $v_d$ is viewed as an auxiliary control input, by Lemma~\ref{eq: necessary condition for observability inequality} and Theorem~\ref{thm: observability inequality}, the controllability of the pair $(A_1,B_1)$ is related to the coefficients $\gamma_k$. In particular, we have the following result (see also Remark \ref{rem_modes} further). 

\smallskip

\begin{lemma}\label{kalman:contro:lemma}
The pair $(A_1,B_1)$ satisfies the Kalman condition if and only if $\gamma_n \neq 0$ for any $n\in\{1,\ldots,N_0\}$, where $\gamma_n$ is defined by \eqref{eq: def gamma_1 - 1} if $\frac{n^2 \pi^2}{L^2}\neq c$ and by \eqref{eq: def gamma_1 - 2} if $\frac{n^2 \pi^2}{L^2} = c$.
\end{lemma}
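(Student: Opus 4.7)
The plan is to reduce the Kalman condition for the augmented pair $(A_1,B_1)$ to a scalar nonvanishing condition via an invertible modal change of variable, and then identify the resulting scalar coefficients with $\gamma_n$ (up to an explicitly nonzero factor).

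First, I would perform the bijective modal change of variable $\tilde w_{1,n} := w_{1,n} - b_{1,n}v$ on \eqref{eq: spectral reductiona}, which yields the equivalent dynamics
$\dot v = v_d$ and $\dot{\tilde w}_{1,n} = \lambda_{1,n}\tilde w_{1,n} + c_n v$ for $n=1,\ldots,N_0$, with $c_n := a_{1,n}+\lambda_{1,n}b_{1,n}$. Since $(v,W_0)\mapsto(v,\tilde W_0)$ is an invertible linear transformation, controllability is preserved. In the new coordinates the state matrix becomes $\tilde A_1=\left(\begin{smallmatrix} 0 & 0 \\ C & A_0\end{smallmatrix}\right)$ with $C=(c_1,\ldots,c_{N_0})^\top$, and the control matrix is $\tilde B_1=(1,0,\ldots,0)^\top$. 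A direct block computation shows that the Kalman matrix of $(\tilde A_1,\tilde B_1)$ factors as $\mathrm{diag}(1,c_1,\ldots,c_{N_0})$ times a Vandermonde-type matrix associated with the eigenvalues $\lambda_{1,1},\ldots,\lambda_{1,N_0}$. Since these eigenvalues are pairwise distinct (the map $n\mapsto n^2$ being injective on $\mathbb{N}^*$), Kalman's rank condition is equivalent to $c_n\neq 0$ for every $n\in\{1,\ldots,N_0\}$.

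The core of the argument is then the identity $c_n=\overline{\psi^3_{1,n}(L)}$. When $\lambda_{1,n}\neq 0$, I would exploit the adjoint eigenvalue equation, which by the structure of $\mathcal{A}^*$ (Lemma \ref{lem: adjoint A0*}, Lemma \ref{lem: dual basis}) gives $\psi^3_{1,n}=-\lambda_{1,n}^{-1}\partial_{xx}\psi^2_{1,n}$. Substituting this into $b_{1,n}=-\frac{1}{\alpha L}\int_0^L x\,\overline{\psi^3_{1,n}(x)}\,\mathrm{d}x$ and integrating by parts twice (using $\psi^2_{1,n}(0)=0$ and the domain condition $\partial_x\psi^2_{1,n}(L)=\alpha\,\psi^3_{1,n}(L)$), I get $\lambda_{1,n}b_{1,n}=\overline{\psi^3_{1,n}(L)}-\tfrac{1}{\alpha L}\overline{\psi^2_{1,n}(L)}$. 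Combined with $a_{1,n}=\tfrac{1}{\alpha L}\overline{\psi^2_{1,n}(L)}$, the $\overline{\psi^2_{1,n}(L)}$ terms cancel, leaving $c_n=\overline{\psi^3_{1,n}(L)}$. The exceptional case $\lambda_{1,n}=0$ is settled directly from the explicit expressions in Lemma \ref{lem: dual basis}: there $c_n=a_{1,n}=\sqrt{2/L}\,\gamma_n$.

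To conclude, the closed-form expression for $\psi^3_{1,n}(L)$ in Lemma \ref{lem: dual basis} exhibits $\psi^3_{1,n}(L)$ as a real nonzero multiple of $\gamma_n$, the scalar multiplier $\sqrt{2/L}\big/[\lambda_{1,n}(\cosh(\lambda_{1,n}L)+\alpha\sinh(\lambda_{1,n}L))]$ being well defined and nonzero thanks to the choice of $\alpha$ in \eqref{eq: condition for selecting alpha}. Since $\gamma_n\in\mathbb{R}$, this gives $c_n\neq 0\iff \gamma_n\neq 0$, which is the claim. The only technical step is the integration-by-parts identity $c_n=\overline{\psi^3_{1,n}(L)}$; everything else is bookkeeping on diagonal/Vandermonde matrices.
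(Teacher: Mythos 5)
Your proof is correct but takes a genuinely different route from the paper's. The paper argues by the Hautus test: failure of the Kalman condition is shown to be equivalent to the existence of a nontrivial eigenfunction $z$ of $\mathcal{A}^*$ with $\langle a+\lambda b,z\rangle=0$; this orthogonality is computed (by the same integration by parts you perform) to mean $(z^2)'(L)=0$, hence $z^3(L)=0$, and the resulting overdetermined boundary-value problem for $z^3$ is then solved from scratch, its $2\times 2$ solvability determinant being precisely $\gamma_n$. You instead eliminate $v_d$ from the modal equations by the similarity $\tilde w_{1,n}=w_{1,n}-b_{1,n}v$ (which is exactly the passage from $\mathcal{W}$ back to $\mathcal{X}=\mathcal{W}-bv$ used in Remark~\ref{rem_modes}), factor the Kalman matrix as $\mathrm{diag}(1,c_1,\dots,c_{N_0})$ times an invertible block containing a Vandermonde matrix with the pairwise distinct nodes $\lambda_{1,n}$, and so reduce the claim to $c_n=a_{1,n}+\lambda_{1,n}b_{1,n}\neq 0$; your integration by parts combined with the adjoint boundary condition $(\psi^2_{1,n})'(L)=\alpha\,\psi^3_{1,n}(L)$ gives $c_n=\overline{\psi^3_{1,n}(L)}$ (the identity the paper records, as a byproduct, in Remark~\ref{rem_modes}), and the conclusion follows from the closed-form value $\psi^3_{1,n}(L)=\sqrt{2/L}\,\gamma_n/\big(\lambda_{1,n}(\cosh(\lambda_{1,n}L)+\alpha\sinh(\lambda_{1,n}L))\big)$, resp. $\sqrt{2/L}\,\gamma_n$ when $\lambda_{1,n}=0$, whose prefactor is well defined and nonzero thanks to \eqref{eq: condition for selecting alpha}. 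I checked the change of variables, the Vandermonde factorization (which works even if some $\lambda_{1,n}=0$, since the integrator block contributes a separate unit pivot), and the identity $c_n=\overline{\psi^3_{1,n}(L)}$; all are correct. What your route buys is a more elementary and fully explicit finite-dimensional argument, with no Hautus test and no re-solving of the adjoint ODEs; what it costs is a dependence on the explicit eigenfunction formulas of Lemma~\ref{lem: dual basis}, which the paper states with only a sketched proof, whereas the paper's Hautus computation is self-contained in that it needs only the solvability condition for the $z^3$-problem rather than the full expression of $\psi^3_{1,n}$.
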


\smallskip

\begin{proof}
By the Hautus test, $(A_1,B_1)$ does not satisfy the Kalman condition if and only if there exist $\lambda\in\mathbb{C}$ and $(x_1,x_2)\in\mathbb{C}\times\mathbb{C}^{N_0}\setminus\{(0,0)\}$ such that $\overline{x_2}^\top B_{a,0} = \lambda \overline{x_1}^\top$, $\overline{x_2}^\top A_0 = \lambda \overline{x_2}^\top$, and $\overline{x_1}^\top + \overline{x_2}^\top B_{b,0} = 0$. This is possible if and only if there exists $x_2 \neq 0$ such that $A_0^* x_2 = \overline{\lambda} x_2$ and $\overline{x_2}^\top \left( B_{a,0} + \lambda B_{b,0} \right) = 0$, i.e., if and only if $\lambda\in\Lambda=\{\lambda_{1,1} , \lambda_{1,2} , \ldots , \lambda_{1,N_0} \}$ while (because the eigenvalues of $\mathcal{A}^*$ are simple) there exists $z\in D(\mathcal{A}^*)\setminus\{0\}$ such that $\mathcal{A}^*z=\overline{\lambda}z$ and $\langle a + \lambda b , z \rangle = 0$. Setting $z=(z^1,z^2,z^3)$, the equation $\mathcal{A}^*z=\overline{\lambda}z$ gives
\begin{subequations}\label{eq: proof Kalman condition - sys eq to be solved - 1-5}
\begin{align}
& (z^1)'' + c z^1 = \overline{\lambda} z^1 \label{eq: proof Kalman condition - sys eq to be solved - 1} \\
& - z^3 + \int_0^{(\cdot)} \int_\tau^L \beta(s) z^1(s) \,\mathrm{d}s\,\mathrm{d}\tau = \overline{\lambda} z^2 \label{eq: proof Kalman condition - sys eq to be solved - 2} \\
& - (z^2)'' = \overline{\lambda} z^3 \label{eq: proof Kalman condition - sys eq to be solved - 3} \\
& z^1(0)=z^1(L)=z^2(0)=z^3(0)=0 \label{eq: proof Kalman condition - sys eq to be solved - 4} \\ 
& (z^2)'(L) - \alpha z^3(L) = 0 \label{eq: proof Kalman condition - sys eq to be solved - 5}
\end{align}
\end{subequations}
while 
\begin{multline*}
0 = \langle a + \lambda b , z \rangle 
= \int_0^L \left( \left(\frac{x}{\alpha L}\right)' \overline{(z^2)'(x)} - \frac{\lambda x}{\alpha L} \overline{z^3(x)} \right) \mathrm{d}x \\
 = \left[ \frac{x}{\alpha L} \overline{(z^2)'(x)} \right]_{x=0}^{x=L} - \int_0^L \frac{x}{\alpha L} \overline{\left( (z^2)''(x) + \overline{\lambda} z^3(x) \right)} \,\mathrm{d}x 
 = \frac{1}{\alpha} \overline{(z^2)'(L)}
\end{multline*}
where we have used \eqref{eq: proof Kalman condition - sys eq to be solved - 3}. Combining with \eqref{eq: proof Kalman condition - sys eq to be solved - 5}, we get
\begin{equation}\label{eq: proof Kalman condition - sys eq to be solved - 6}
(z^2)'(L) = z^3(L) = 0 .
\end{equation}

Let us discard the case $z^1 = 0$ by noting that, in this case, \eqref{eq: proof Kalman condition - sys eq to be solved - 2} gives $- z^3 = \overline{\lambda} z^2$. Combining this result with \eqref{eq: proof Kalman condition - sys eq to be solved - 3} and \eqref{eq: proof Kalman condition - sys eq to be solved - 6} we infer that $(z^3)'' - (\overline{\lambda})^2 z^3 = 0$ with $z^3(L)=(z^3)'(L)=0$. By Cauchy uniqueness, we deduce that $z^3 = 0$. Finally, \eqref{eq: proof Kalman condition - sys eq to be solved - 3} gives $(z^2)'' = 0$ with $z^2(0)=(z^2)'(L) = 0$, hence $z^2 =0$. This is a contradiction with the initial assumption that $z \neq 0$.

Hence, we must have $z^1 \neq 0$. Based on \eqref{eq: proof Kalman condition - sys eq to be solved - 1} and \eqref{eq: proof Kalman condition - sys eq to be solved - 4} we have $(z^1)'' = \overline{(\lambda-c)} z^1$ with $z^1(0)=z^1(L)=0$. Then there exist $A \neq 0$ and $n \in\mathbb{N}^*$ such that $z^1(x) = A\sin\left(\frac{n\pi}{L}x\right)$ and $\lambda = c - \frac{n^2 \pi^2}{L^2}$. Since $\lambda\in\Lambda$, we have $\lambda = \lambda_{1,n}$ for some $n\in\{1,\ldots,N_0\}$. We infer from \eqref{eq: proof Kalman condition - sys eq to be solved - 2}, \eqref{eq: proof Kalman condition - sys eq to be solved - 3}, and \eqref{eq: proof Kalman condition - sys eq to be solved - 6} that
\begin{subequations}
\begin{align}
& (z^3)''(x) - \lambda_{1,n}^2 z^3(x) = - A \beta(x) \sin\Big(\frac{n\pi}{L}x\Big) , \label{eq: proof Kalman condition - sys eq to be solved - 7} \\
& z^3(0) = z^3(L) = (z^3)'(L) = 0 , \label{eq: proof Kalman condition - sys eq to be solved - 8}
\end{align}
\end{subequations}
where we recall that $\lambda_{1,n} = c - \frac{n^2 \pi^2}{L^2}$.

Assume first that $\lambda_{1,n} \neq 0$. In this case, integrating \eqref{eq: proof Kalman condition - sys eq to be solved - 7}, there exists $\delta_1,\delta_2\in\mathbb{R}$ such that
\begin{multline*}
\!\!\!\!\! z^3(x) = \left( \delta_1 + \frac{A}{2\lambda_{1,n}} \int_0^x e^{\lambda_{1,n} s} \beta(s) \sin\Big( \frac{n \pi}{L} s \Big) \,\mathrm{d}s \right) e^{-\lambda_{1,n} x} \\
+ \left( \delta_2 - \frac{A}{2\lambda_{1,n}} \int_0^x e^{-\lambda_{1,n} s} \beta(s) \sin\Big( \frac{n \pi}{L} s \Big) \,\mathrm{d}s \right) e^{\lambda_{1,n} x} .
\end{multline*}
We now use \eqref{eq: proof Kalman condition - sys eq to be solved - 8}. The condition $z^3(0) = 0$ gives $\delta_1 + \delta_2 = 0$, hence 
\begin{equation*}
z^3(x) = - 2 \delta_1 \sinh(\lambda_{1,n} x) 
+ \frac{A}{\lambda_{1,n}} \int_0^x \beta(s)  \sinh(\lambda_{1,n}(s-x)) \sin\Big( \frac{n \pi}{L} s \Big) \,\mathrm{d}s .
\end{equation*}
The conditions $z^3(L) = 0$ and $(z^3)'(L) = 0$ give the system
\begin{equation*}
\begin{pmatrix}
\sinh(\lambda_{1,n} L) & \chi_1 \\
\cosh(\lambda_{1,n} L) & \chi_2
\end{pmatrix}
\begin{pmatrix}
-2 \delta_1 \lambda_{1,n} \\ A
\end{pmatrix}
= 0 
\end{equation*}
where $\chi_1 = \int_0^L \beta(s) \sinh(\lambda_{1,n}(s-L)) \sin\Big( \frac{n \pi}{L} s \Big) \,\mathrm{d}s$ and $\chi_2 = -\int_0^L \beta(s) \cosh(\lambda_{1,n}(s-L)) \sin\Big( \frac{n \pi}{L} s \Big) \,\mathrm{d}s$. Since $A \neq 0$ and $\lambda_{1,n} \neq 0$, the determinant of the above $2 \times 2$ matrix must be zero, i.e., 
\begin{align*}
0 = \int_0^L \beta(s) \sinh(\lambda_{1,n} s) \sin\Big( \frac{n \pi}{L} s \Big) \,\mathrm{d}s = \gamma_n .
\end{align*}
Under this condition, one can compute $A \neq 0$ and $\delta_1$, and thus obtain $z^3$. Finally, $z^2$ is obtained by integrating twice \eqref{eq: proof Kalman condition - sys eq to be solved - 3} and using the conditions $z^2(0)=(z^2)'(L)=0$ borrowed from \eqref{eq: proof Kalman condition - sys eq to be solved - 4} and \eqref{eq: proof Kalman condition - sys eq to be solved - 6}. The obtained $z=(z^1,z^2,z^3) \neq 0$ satisfies \eqref{eq: proof Kalman condition - sys eq to be solved - 1-5} and \eqref{eq: proof Kalman condition - sys eq to be solved - 6}.

Assume now that $\lambda_{1,n}=0$. In this case, \eqref{eq: proof Kalman condition - sys eq to be solved - 7} reduces to $(z^3)'' = - A \beta(x) \sin\left(\frac{n\pi}{L}x\right)$. Owing to \eqref{eq: proof Kalman condition - sys eq to be solved - 8}, we infer from the first and third conditions that $z^3(x)= A \int_0^x \int_\tau^L \beta(s) \sin\left(\frac{n\pi}{L}s\right) \,\mathrm{d}s\,\mathrm{d}\tau$ while the second condition gives 
$$0 = \int_0^L \int_\tau^L \beta(s) \sin\left(\frac{n\pi}{L}s\right) \,\mathrm{d}s\,\mathrm{d}\tau = \gamma_n$$
because $A \neq 0$. As previously, we then obtain $z^2$ by integrating twice \eqref{eq: proof Kalman condition - sys eq to be solved - 3} with the conditions $z^2(0)=(z^2)'(L)=0$ borrowed from \eqref{eq: proof Kalman condition - sys eq to be solved - 4} and \eqref{eq: proof Kalman condition - sys eq to be solved - 6}. Hence, the computed $z=(z^1,z^2,z^3) \neq 0$ satisfies \eqref{eq: proof Kalman condition - sys eq to be solved - 1-5} and \eqref{eq: proof Kalman condition - sys eq to be solved - 6}.
\end{proof}

\smallskip

\begin{remark}\label{rem_modes}
    While the exact controllability of the infinite-dimensional plant \eqref{eq: cascade equation} requires that $\gamma_n \neq 0$ for any $n\in\mathbb{N}^*$ (see Theorem~\ref{thm: observability inequality}), the controllability of the finite-dimensional dynamics \eqref{truncature:eq} capturing the $N_0$ first modes of the reaction-diffusion part of the PDE cascade only requires that $\gamma_n \neq 0$ for any $n\in\{1,\ldots,N_0\}$, i.e., only for the $N_0$ first modes. This  can be interpreted as follows. Since $\mathcal{X}(t)=\left(y(t,\cdot),z(t,\cdot),\partial_t z(t,\cdot)\right)$, we have $\mathcal{W} = \mathcal{X} + b v$. Hence
    \begin{subequations}\label{eq: spectral reduction bis}
    \begin{align}
    \dot{x}_{1,n} & = \lambda_{1,n} x_{1,n} + \beta_{1,n} v , \qquad n\in\mathbb{N}^*,\\
    \dot{x}_{2,m} & = \lambda_{2,m} x_{2,m} + \beta_{2,m} v , \qquad m \in\mathbb{Z},
    \end{align}
    \end{subequations}
    where $x_{1,n}(t) = \langle \mathcal{X}(t,\cdot) , \psi_{1,n} \rangle$, $x_{2,m}(t) = \langle \mathcal{X}(t,\cdot) , \psi_{2,m} \rangle$, 
    $\beta_{1,n} = a_{1,n} + \lambda_{1,n} b_{1,n}$ and $\beta_{2,m} = a_{2,m} + \lambda_{2,m} b_{2,m}$. In particular, the computations done in the proof of Lemma~\ref{kalman:contro:lemma} show that $\beta_{i,l} = \frac{1}{\alpha} \overline{(\psi_{i,l}^2)'(L) = \overline{\psi_{i,l}^3(L)}}$. This means that the control has no impact on the mode $\lambda_{i,l}$ if and only if $\psi_{i,l}^3(L) = 0$. This is in accordance with the observation based on \eqref{eq: observability inequality - system output} that led to Lemma~\ref{eq: necessary condition for observability inequality} and the conditions $\gamma_n \neq 0$. Since \eqref{truncature:eq} involves only a finite number of parabolic modes, this condition holds for a finite number of $\gamma_n$.
\end{remark}

\smallskip

In view of the control design, based on \eqref{eq: projection system trajectory into Riesz basis}, we note that the system output $y_o(t)$ defined by \eqref{eq: measurement heat distributed} is expanded as
\begin{multline*}
y_o(t) = \int_0^L c_o(x) y(t,x) \,\mathrm{d}x = \int_0^L c_o(x) w^1(t,x) \,\mathrm{d}x \\
 = \sum_{n\in\mathbb{N}^*} c_{1,n} w_{1,n}(t) + \sum_{m \in \mathbb{Z}} c_{2,m} w_{2,m}(t) 
= C_0 W_0(t) + \sum_{n \geq N_0 + 1} c_{1,n} w_{1,n}(t) + \sum_{m \in \mathbb{Z}} c_{2,m} w_{2,m}(t)
\end{multline*}
with 
\begin{subequations}\label{eq: def c_{i,k}}
\begin{align}
c_{1,n} & = \int_0^L c_o(x) \phi^1_{1,n}(x) \,\mathrm{d}x ,\quad n\in\mathbb{N}^*,  \\
c_{2,m} & = \int_0^L c_o(x) \phi^1_{2,m}(x) \,\mathrm{d}x ,\quad m \in \mathbb{Z}.
\end{align}
\end{subequations}
and
$$
C_0 = \begin{pmatrix}
c_{1,1} & c_{1,2} & \ldots & c_{1,N_0}
\end{pmatrix} \in\mathbb{R}^{1 \times N_0} .
$$
Since the matrix $A_0$ is diagonal with simple eigenvalues, we have the following result.

\smallskip

\begin{lemma}
The pair $(A_0,C_0)$ satisfies the Kalman condition if and only if 
$$
c_{1,n} =  \sqrt{\frac{2}{L}} \int_0^L \!\! c_o(x) \sin\Big( \frac{n\pi}{L} x \Big) \,\mathrm{d}x \neq 0 \quad \forall n\in\{1,\ldots,N_0\}. 
$$ 
\end{lemma}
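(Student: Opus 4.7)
The plan is to apply the Hautus (PBH) observability test to the pair $(A_0,C_0)$, exploiting the fact that $A_0 = \mathrm{diag}(\lambda_{1,1},\ldots,\lambda_{1,N_0})$ is diagonal with pairwise distinct entries. Since $\lambda_{1,n} = c - n^2\pi^2/L^2$ is strictly decreasing in $n$, the $N_0$ eigenvalues of $A_0$ are simple, and each associated left eigenvector is simply the $n$-th canonical basis vector $e_n^\top$ of $\mathbb{R}^{N_0}$. The Hautus test then states that $(A_0,C_0)$ is observable if and only if, for each $n\in\{1,\ldots,N_0\}$, no left eigenvector $e_n^\top$ is annihilated by $C_0$, i.e.\ $C_0 e_n = c_{1,n} \neq 0$.

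Concretely, I would argue as follows. The pair $(A_0,C_0)$ fails the Kalman condition if and only if there exists $\lambda\in\mathbb{C}$ and a nonzero vector $x\in\mathbb{C}^{N_0}$ with $A_0^\top x = \lambda x$ and $C_0 x = 0$. Because $A_0$ is diagonal with distinct eigenvalues, any such $x$ must be a scalar multiple of some $e_n$ with $\lambda = \lambda_{1,n}$, and $C_0 e_n = c_{1,n}$. Thus failure of the Kalman condition is equivalent to the existence of some $n\in\{1,\ldots,N_0\}$ with $c_{1,n}=0$, which yields the stated equivalence.

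It remains only to identify the coefficients $c_{1,n}$ with the claimed Fourier-sine integrals. By definition \eqref{eq: def c_{i,k}}, $c_{1,n} = \int_0^L c_o(x)\phi^1_{1,n}(x)\,\mathrm{d}x$, and by Lemma~\ref{lem: eigenstructures of A} we have $\phi^1_{1,n}(x) = \sqrt{2/L}\sin(n\pi x/L)$; substituting gives the announced formula
$$c_{1,n} = \sqrt{\frac{2}{L}}\int_0^L c_o(x)\sin\Big(\frac{n\pi}{L}x\Big)\,\mathrm{d}x.$$

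There is no serious obstacle here: the result is a direct application of the PBH test to a diagonal matrix with distinct spectrum, combined with the explicit form of the parabolic eigenfunctions. The only mild point worth emphasizing in the write-up is the distinctness of the $\lambda_{1,n}$ on the truncated range $n\in\{1,\ldots,N_0\}$, which is immediate from the strict monotonicity of $n\mapsto c-n^2\pi^2/L^2$.
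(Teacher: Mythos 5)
Your proof is correct and follows exactly the argument the paper intends: the paper offers no written proof beyond the remark that $A_0$ is diagonal with simple eigenvalues, and your Hautus-test argument (left and right eigenvectors coincide with the canonical basis here, so observability fails precisely when some $c_{1,n}=0$) is the standard way to spell that remark out. The identification of $c_{1,n}$ with the Fourier--sine coefficient via $\phi^1_{1,n}(x)=\sqrt{2/L}\,\sin(n\pi x/L)$ is likewise exactly what the paper's definition \eqref{eq: def c_{i,k}} gives.
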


\subsection{Output feedback control strategy and main result}\label{sec_main_feed}
\noindent
Given two integers $N \geq N_0 +1$ and $M \in\mathbb{N}$ to be chosen later, we define the following control strategy:
\begin{subequations}\label{eq: controller}
\begin{align}
\dot{\hat{w}}_{1,n} & = \lambda_{1,n} \hat{w}_{1,n} + a_{1,n} v + b_{1,n} v_d  - l_{1,n} \bigg( \sum_{k=1}^N c_{1,k} \hat{w}_{1,k} + \sum_{\vert l \vert \leq M} c_{2,l} \hat{w}_{2,l} - y_o \bigg) , \quad 1 \leq n \leq N_0 , \\
\dot{\hat{w}}_{1,n} & = \lambda_{1,n} \hat{w}_{1,n} + a_{1,n} v + b_{1,n} v_d ,  \quad N_0 + 1 \leq n \leq N , \\
\dot{\hat{w}}_{2,m} & = \lambda_{2,m} \hat{w}_{2,m} + a_{2,m} v + b_{2,m} v_d , \quad \vert m \vert \leq M , \\
v_d & = k_v v + \sum_{n=1}^{N_0} k_{1,n} \hat{w}_{1,n} ,
\end{align}
\end{subequations}
where $k_v$ and $k_{1,n}$, for $1\leq n\leq N_0$, are the feedback gains and $l_{1,n}$, for $1\leq n\leq N_0$, are the observer gains. This finite-dimensional control strategy leveraging a Luenberger-type observer on a finite number of modes is inspired by the seminal works~\cite{sakawa1983feedback} and its more recent developments~\cite{katz2020constructive, lhachemi2020finite, lhachemi2021nonlinear, grune2021finite}. We set
\begin{align*}
K & = \begin{pmatrix}
k_v & k_{1,1} & k_{1,2} & \ldots & k_{1,N_0}
\end{pmatrix} \in\mathbb{R}^{1 \times (N_0+1)} , \qquad
L  = \begin{pmatrix}
l_{1,1} & l_{1,2} & \ldots & l_{1,N_0}
\end{pmatrix}^\top \in\mathbb{R}^{N_0} .
\end{align*}


\smallskip

\begin{theorem}\label{thm: main result 2}
Let $\delta > 0$ be arbitrary. Let $N_0\in\mathbb{N}^*$ and $\alpha > 1$ be such that $\lambda_{1,N_0+1} < - \delta$ and $\rho = \frac{1}{2L} \log\big(\frac{\alpha-1}{\alpha+1}\big) = \operatorname{Re}\lambda_{2,m} < - \delta$. Assume that:
\begin{itemize}
\item $\gamma_n \neq 0$ for any $n\in\{1,\ldots,N_0\}$, where $\gamma_n$ is defined by \eqref{eq: def gamma_1 - 1} if $\frac{n^2 \pi^2}{L^2}\neq c$ and by \eqref{eq: def gamma_1 - 2} if $\frac{n^2 \pi^2}{L^2} = c$;
\item $c_{1,n} \neq 0$ for any $n\in\{1,\ldots,N_0\}$, where $c_{1,n}$ is defined by \eqref{eq: def c_{i,k}}. 
\end{itemize}
Let $K\in\mathbb{R}^{1 \times (N_0 +1)}$ and $L\in\mathbb{R}^{N_0}$ be such that $A_1 + B_1 K$ and $A_0 - L C_0$ are Hurwitz with eigenvalues of real part less than $-\delta < 0$. 

Then, for all integers $N \geq N_0 +1$ and $M$ sufficiently large\footnote{They must be chosen large enough so that the inequalities \eqref{constraints_NM} are satisfied, see the proof.},
there exists $C > 0$ such that 
any solution of the system \eqref{eq: cascade equation} in closed-loop with the output feedback control \eqref{eq: measurement heat distributed}, \eqref{eq: preliminary feedback}, \eqref{eq: inegral action}, \eqref{eq: controller} satisfies
\begin{align}
& \big\Vert ( y(t,\cdot) , z(t,\cdot) , \partial_t z(t,\cdot) ) \big\Vert_{Y} + \vert v(t) \vert  + \sum_{n=1}^N \vert \hat{w}_{1,n}(t) \vert + \sum_{\vert m \vert \leq M} \vert \hat{w}_{2,m}(t) \vert \nonumber \\
& \leq
C e^{-\delta t}
\Big(
\big\Vert ( y(0,\cdot) , z(0,\cdot) , \partial_t z(0,\cdot) ) \big\Vert_{Y} + \vert v(0) \vert  + \sum_{n=1}^N \vert \hat{w}_{1,n}(0) \vert + \sum_{\vert m \vert \leq M} \vert \hat{w}_{2,m}(0) \vert \Big)  \label{eq: exponential stability}
\end{align}
for every $t\geq 0$, where $Y$ is either the Hilbert space $\mathcal{H}^0$ defined by \eqref{eq: state-space},
or $\mathcal{H}^1$ defined by \eqref{eq: space H1}. 
\end{theorem}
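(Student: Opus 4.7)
The plan is to work in the homogenized coordinates $\mathcal{W} = (w^1,w^2,w^3)$ of \eqref{eq: cascade equation - homonegenous abstract}, augment the state by $v$ (whose evolution is given by \eqref{eq: inegral action}), and expand everything on the Riesz basis $\Phi$ of Lemma~\ref{lem: A riesz spectral}. Since $w_{1,n} - \hat w_{1,n}$, $w_{2,m}-\hat w_{2,m}$ can be formed for all indices estimated by the observer, I introduce the finite observer-error vectors $E_0 = (\hat w_{1,n}-w_{1,n})_{1\leq n\leq N_0}$, $E_1 = (\hat w_{1,n}-w_{1,n})_{N_0+1\leq n\leq N}$, $E_2 = (\hat w_{2,m}-w_{2,m})_{|m|\leq M}$, together with the tail truncation errors $\zeta^{(1)}=\sum_{n>N} c_{1,n} w_{1,n}$ and $\zeta^{(2)}=\sum_{|m|>M} c_{2,m} w_{2,m}$. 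Using \eqref{eq: spectral reduction}--\eqref{eq: inegral action} and \eqref{eq: controller}, the closed-loop system splits into five blocks: (i) the augmented controlled block $\dot W_1 = (A_1+B_1K) W_1 + B_1 K \begin{pmatrix} 0 \\ E_0 \end{pmatrix}$; (ii) the core observer block $\dot E_0 = (A_0-LC_0)E_0 + L\bigl(C_0^{\rm ext} E_1 + C_0^{\rm wave} E_2 - \zeta^{(1)} - \zeta^{(2)}\bigr)$; (iii) the decoupled stable finite blocks $\dot E_1 = \mathrm{diag}(\lambda_{1,n}) E_1$ and $\dot E_2 = \mathrm{diag}(\lambda_{2,m}) E_2$; (iv) the parabolic tail $\dot w_{1,n} = \lambda_{1,n} w_{1,n} + a_{1,n} v + b_{1,n} v_d$ for $n>N$; and (v) the hyperbolic tail $\dot w_{2,m} = \lambda_{2,m} w_{2,m} + a_{2,m} v + b_{2,m} v_d$ for $|m|>M$.

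Next I would establish the decay rate of the homogeneous parts. By hypothesis, $A_1+B_1K$ and $A_0-LC_0$ are Hurwitz with spectral abscissa $<-\delta$, so there exist symmetric positive-definite $P_1,P_2$ satisfying the Lyapunov inequalities $(A_1+B_1K)^\top P_1 + P_1(A_1+B_1K) \preceq -2(\delta+\eta)P_1$ and analogously for $P_2$, for some margin $\eta>0$. The blocks $E_1, E_2$ decay autonomously at rate at least $|\lambda_{1,N_0+1}|>\delta$ and $|\rho|>\delta$ respectively, so $\sum_{n=N_0+1}^N|e_{1,n}|^2$ and $\sum_{|m|\leq M}|e_{2,m}|^2$ can be included with weights in the Lyapunov function. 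For the tails, the key estimates are: $-2\lambda_{1,n}\geq 2(\delta+\eta)$ for all $n>N$ (provided $N$ is large enough), and $-2\mathrm{Re}\,\lambda_{2,m}=2|\rho|\geq 2(\delta+\eta)$ for all $|m|>M$, so the free evolution of the tails contributes the needed negative definiteness.

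The proposed Lyapunov function is
\begin{equation*}
\mathcal{V}(t) = W_1^\top P_1 W_1 + \kappa_0 E_0^\top P_2 E_0 + \kappa_1 \!\!\sum_{n=N_0+1}^N \!\! |e_{1,n}|^2 + \kappa_2 \!\!\sum_{|m|\leq M}\!\! |e_{2,m}|^2 + \kappa_3 \!\!\sum_{n>N}\!\!|w_{1,n}|^2 + \kappa_4 \!\!\sum_{|m|>M}\!\! |w_{2,m}|^2
\end{equation*}
with positive constants $\kappa_0,\ldots,\kappa_4$ to be tuned. Differentiating along trajectories and using the Young inequality to absorb every cross term, one obtains $\dot{\mathcal{V}}+2\delta\mathcal{V}\leq 0$ provided three classes of smallness conditions hold: (a) the coupling $B_1 K(0,E_0)$ between $W_1$ and $E_0$ can always be absorbed by $P_2$ with $\kappa_0$ large; (b) the ``perturbation'' terms $LC_0^{\rm ext}E_1$, $LC_0^{\rm wave}E_2$ in the $E_0$-equation are absorbed by choosing $\kappa_1,\kappa_2$ large relative to $\kappa_0$; (c) the tail perturbations involving $v,v_d$ and the innovation terms $\zeta^{(1)},\zeta^{(2)}$ are controlled because $a_{1,n},b_{1,n},c_{1,n}\to 0$ and $a_{2,m},b_{2,m},c_{2,m}$ stay bounded in $\ell^2$ (which follows from the explicit formulas in Lemmas \ref{lem: eigenstructures of A} and \ref{lem: dual basis}, together with the $O(1/m^2)$ estimate already used in the proof of Lemma \ref{lem: A riesz spectral}), so that $\|\zeta^{(i)}\|$ is dominated by $N,M$-small factors times the tail norms. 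Collecting these yields a finite system of inequalities
\begin{equation}\label{constraints_NM}
\Theta(N,M,\kappa_0,\ldots,\kappa_4) \prec 0
\end{equation}
which becomes solvable once $N$ and $M$ are chosen large enough; this is precisely the condition alluded to in the footnote of the statement.

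With $\mathcal{V}$ decaying at rate $2\delta$, I then translate back: the equivalence $\|\mathcal{W}\|_{\mathcal{H}^0}^2\asymp\sum|w_{1,n}|^2+\sum|w_{2,m}|^2$ from the Riesz basis, together with the inversion of the change of variable \eqref{eq: change of variable}, controls $\|\mathcal{X}(t)\|_{\mathcal{H}^0}$ by $\mathcal{V}^{1/2}+C|v(t)|$, and $|v(t)|$ is itself part of $W_1$ hence decays at rate $\delta$. The estimate on the observer components $\hat w_{1,n},\hat w_{2,m}$ follows from adding the true modes and the errors. The upgrade to $\mathcal{H}^1$ uses Lemma~\ref{lemma: Riesz basis H1}: the only change is to replace the tail term $\sum_{n>N}|w_{1,n}|^2$ in $\mathcal{V}$ by $\sum_{n>N}(n\pi/L)^2|w_{1,n}|^2$, which is still absorbed by the gain $-2\lambda_{1,n}\sim 2n^2\pi^2/L^2$ produced by the tail free dynamics, at the cost of making $N$ larger.

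I expect the main obstacle to be step (c) above: controlling the tail perturbations in $\dot E_0$ and in $\dot{\mathcal{V}}$, because the observer innovation sees the full output $y_o$ and therefore inherits all truncation residuals. This is where the asymptotic behavior of the coefficients $a_{i,\cdot},b_{i,\cdot},c_{i,\cdot}$ (echoing the $O(1/m^2)$ estimates of the $\phi^1_{2,m}$) must be used carefully to ensure the cross-terms are a genuinely small perturbation, thereby making the inequalities \eqref{constraints_NM} satisfiable for $N,M$ sufficiently large.
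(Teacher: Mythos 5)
Your proposal follows essentially the same route as the paper's proof: spectral expansion in the Riesz basis, separation of the closed loop into a finite-dimensional controller/observer block plus parabolic and hyperbolic tails, a Lyapunov function combining a quadratic form on the finite part with (weighted, for $\mathcal{H}^1$) $\ell^2$ tail sums, Young's inequality to absorb the cross terms and the truncation residuals $\zeta^{(1)},\zeta^{(2)}$, and feasibility of the resulting smallness constraints for $N,M$ large. The only notable difference is organizational: the paper assembles one closed-loop matrix $F$ and a single $P$ solving $\bar F^\top P+PF+2\delta P=-I$ (using $\Vert P\Vert=O(1)$ from a cited reference together with a Schur complement), whereas you tune separate weights $\kappa_0,\dots,\kappa_4$ by a cascade absorption argument --- both are valid and lead to the same conclusion.
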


\smallskip

\begin{remark}
In the above statement, it is understood that, if one takes an initial condition $(y(0,\cdot) , z(0,\cdot) , \partial_t z(0,\cdot))$ in $Y$, then there is a unique solution, remaining in $Y$. This fact follows from the proof.
\end{remark}

\smallskip

\begin{proof}
We define the observation discrepancies $e_{1,n} = w_{1,n} - \hat{w}_{1,n}$ and $e_{2,m} = w_{2,m} - \hat{w}_{2,m}$ as well as the vectors
\begin{align*}
\hat{W}_0 & = \begin{pmatrix}
\hat{w}_{1,1} & \hat{w}_{1,2} & \ldots & \hat{w}_{1,N_0} 
\end{pmatrix}^\top , \\
E_1 & = \begin{pmatrix}
e_{1,1} & e_{1,2} & \ldots & e_{1,N_0} 
\end{pmatrix}^\top , \\
\hat{W}_2 & = \begin{pmatrix}
\hat{w}_{1,N_0 + 1} & \ldots & \hat{w}_{1,N} & \hat{w}_{2,0} & \ldots & \hat{w}_{2,-M} & \hat{w}_{2,M}
\end{pmatrix}^\top , \\
E_2 & = \begin{pmatrix}
e_{1,N_0 + 1} & \ldots & e_{1,N} & e_{2,0} & \ldots & e_{2,-M} & e_{2,M}
\end{pmatrix}^\top .
\end{align*}
We infer from \eqref{eq: spectral reduction} and \eqref{eq: controller} that
\begin{subequations}
\begin{align}
\dot{\hat{W}}_0 & = A_0 \hat{W}_0 + B_{a,0} v + B_{b,0} v_d + L C_0 E_1 + L C_1 E_2 + L \zeta_1 + L \zeta_2 , \\
\dot{E}_1 & = ( A_0 - L C_0 ) E_1 - L C_1 E_2 - L \zeta_1 - L \zeta_2 , \\
\dot{\hat{W}}_2 & = A_2 \hat{W}_2 + B_{a,1} v + B_{b,1} v_d , \\
\dot{E}_2 & = A_2 E_2 ,
\end{align}
\end{subequations} 
where 
\begin{align*}
A_2 & = \mathrm{diag}\left(
\lambda_{1,N_0 + 1} , \ldots , \lambda_{1,N} , \lambda_{2,0} , \ldots , \lambda_{2,-M} , \lambda_{2,M}
\right) , \\
B_{a,1} & = \begin{pmatrix}
a_{1,N_0+1} & \ldots & a_{1,N} & a_{2,0} & \ldots & a_{2,-M} & a_{2,M}
\end{pmatrix}^\top , \\
B_{b,1} & = \begin{pmatrix}
b_{1,N_0+1} & \ldots & b_{1,N} & b_{2,0} & \ldots & b_{2,-M} & b_{2,M}
\end{pmatrix}^\top , \\
C_1 & = \begin{pmatrix}
c_{1,N_0+1} & \ldots & c_{1,N} & c_{2,0} & \ldots & c_{2,-M} & c_{2,M}
\end{pmatrix} , \\
\zeta_1 & = \sum_{k\geq N+1} c_{1,k} w_{1,k} , \quad
\zeta_2 = \sum_{\vert l \vert \geq M+1} c_{2,l} w_{2,l} ,
\end{align*}
with $A_2 \in\mathbb{C}^{N-N_0+2M+1}$, $B_{a,1},B_{b,1} \in\mathbb{C}^{N-N_0+2M+1}$ and $C_1 \in\mathbb{C}^{N-N_0+2M+1}$.
Defining the augmented vectors
\begin{equation*}
\hat{W}_1 = \begin{pmatrix} v \\ \hat{W}_0 \end{pmatrix}
, \quad
\tilde{L} = \begin{pmatrix} 0 \\ L \end{pmatrix} ,
\end{equation*}
we obtain
\begin{subequations} 
\begin{align}
v_d & = K \hat{W}_1 , \\
\dot{\hat{W}}_1 & = (A_1+B_1 K) \hat{W}_1 + \tilde{L} C_0 E_1 + \tilde{L} C_1 E_2 + \tilde{L} \zeta_1 + \tilde{L} \zeta_2 .
\end{align}
\end{subequations} 
Finally, setting
\begin{equation}
X = \mathrm{col}\big( \hat{W}_1 , E_1 , \hat{W}_2 , E_2 \big) ,
\end{equation}
we have
\begin{equation}
\dot{X} = F X + \mathcal{L} \zeta_1 + \mathcal{L} \zeta_2
\end{equation}
where 
\begin{subequations}\label{eq: def matrix F}
\begin{align}
&F =  \begin{pmatrix}
A_1 + B_1 K & \tilde{L} C_0 & 0 & \tilde{L} C_1 \\
0 & A_0 - L C_0 & 0 & - L C_1 \\
B_{a,1} \Big( 1 \ 0 \ \cdots \ 0 \Big) + B_{b,1} K & 0 & A_2 & 0 \\
0 & 0 & 0 & A_2
\end{pmatrix} , \\
& \mathcal{L} = \begin{pmatrix}
\tilde{L}^\top & - L^\top & 0 & 0
\end{pmatrix}^\top .
\end{align}
\end{subequations}
Let us now define a suitable Lyapunov functional. Let $P$ be a Hermitian positive definite matrix (to be chosen later). 

For the parabolic part of the system evaluated in $L^2$ norm (i.e., for the state of the system evaluated in $\mathcal{H}^0$ norm), we define
\begin{equation}\label{eq: V L2 norm}
V(X,w) = \bar{X}^\top P X + \sum_{n \geq N+1}\!  \vert w_{1,n} \vert^2 + \sum_{\vert m \vert \geq M+1}\!  \vert w_{2,m} \vert^2 .
\end{equation}
Since $\Phi$ is a Riesz basis of $\mathcal{H}^0$ (see Lemma~\ref{lem: A riesz spectral}), of dual basis $\Psi$ (see Lemma~\ref{lem: dual basis}), it follows that $\sqrt{V}$ is a norm, equivalent to the norm of $\mathbb{C}^{2N+1}\times\mathcal{H}^0$. 

For the parabolic part of the system evaluated in $H^1$ norm (i.e., for the state of the system evaluated in $\mathcal{H}^1$ norm), we define
\begin{equation}\label{eq: V H1 norm}
V(X,w) = \bar{X}^\top P X + \sum_{n \geq N+1} n^2 \vert w_{1,n} \vert^2 + \sum_{\vert m \vert \geq M+1} \vert w_{2,m} \vert^2 .
\end{equation}
Since $\Phi^1$ is a Riesz basis of $\mathcal{H}^1$ (see Lemma~\ref{lemma: Riesz basis H1}), $\sqrt{V}$ is a norm, equivalent to the norm of $\mathbb{C}^{2N+1}\times\mathcal{H}^1$. Indeed, using \eqref{eq: projection system trajectory into Riesz basis}, we have
\begin{align*}
W(t,\cdot) & = \left(w^1(t,\cdot),w^2(t,\cdot),w^3(t,\cdot)\right) \nonumber \\
& = \sum_{n\in\mathbb{N}^*} w_{1,n}(t) \phi_{1,n} + \sum_{m \in\mathbb{Z}} w_{2,m}(t) \phi_{2,m} \\
& = \sum_{n\in\mathbb{N}^*} \dfrac{n \pi}{L} w_{1,n}(t) \dfrac{L}{n \pi} \phi_{1,n} + \sum_{m \in\mathbb{Z}} w_{2,m}(t) \phi_{2,m} .
\end{align*}
The above two series converge \emph{a priori} in $\mathcal{H}^0$ norm. However, for a classical solution $\mathcal{W}(t,\cdot)\in D(\mathcal{A})$, using Lemma~\ref{lemma: Riesz basis H1}, the series converge in $\mathcal{H}^1$ norm. Hence, thanks to Lemma~\ref{lemma: Riesz basis H1}, $\Vert \mathcal{W}(t,\cdot) \Vert_{\mathcal{H}^1}^2$ is equivalent to $\sum_{n\in\mathbb{N}^*} n^2 \vert w_{1,n}(t) \vert^2 + \sum_{m \in\mathbb{Z}} \vert w_{2,m}(t) \vert^2$. This justifies the definition of \eqref{eq: V H1 norm}.

Since the proofs in $\mathcal{H}^0$ norm and in $\mathcal{H}^1$ norm are now similar, we focus on the second case. Setting $\tilde{X} = \mathrm{col}(X,\zeta_1,\zeta_2)$, the computation of the time derivative of $V$ along the system trajectories gives
\begin{align*}
& \dot{V} = \bar{\tilde{X}}^\top 
\begin{pmatrix} 
\bar{F}^\top P + P F & P \mathcal{L} & P \mathcal{L} \\
\mathcal{L}^\top P & 0 & 0 \\
\mathcal{L}^\top P & 0 & 0
\end{pmatrix} 
\tilde{X} 
 + 2 \sum_{n \geq N+1} n^2 \operatorname{Re} \left( \left( \lambda_{1,n} w_{1,n} + a_{1,n} v + b_{1,n} v_d \right) \overline{w_{1,n}} \right) \\
& \qquad\qquad\qquad + 2 \sum_{\vert m \vert \geq M+1} \operatorname{Re} \left( \left( \lambda_{2,m} w_{2,m} + a_{2,m} v + b_{2,m} v_d \right) \overline{w_{2,m}} \right) \\
& \leq \bar{\tilde{X}}^\top  
\begin{pmatrix} 
\bar{F}^\top P + P F & P \mathcal{L} & P \mathcal{L} \\
\mathcal{L}^\top P & 0 & 0 \\
\mathcal{L}^\top P & 0 & 0
\end{pmatrix} 
\tilde{X} \\
& \phantom{\leq}\, + 2 \sum_{n \geq N+1} n^2 \lambda_{1,n} \vert w_{1,n} \vert^2
+ 2 \sum_{\vert m \vert \geq M+1} \underbrace{\operatorname{Re}\lambda_{2,m}}_{= \rho} \vert w_{2,m} \vert^2  + \frac{2}{\epsilon} \bigg( \sum_{n \geq N+1} n^4 \vert w_{1,n} \vert^2 + \sum_{\vert m \vert \geq M+1} \vert w_{2,m} \vert^2 \bigg) \\
& \phantom{\leq}\, + \epsilon \bigg( \sum_{n \geq N+1} \vert a_{1,n} \vert^2 + \sum_{\vert m \vert \geq M+1} \vert a_{2,m} \vert^2 \bigg) \vert v \vert^2 
 + \epsilon \bigg( \sum_{n \geq N+1} \vert b_{1,n} \vert^2 + \sum_{\vert m \vert \geq M+1} \vert b_{2,m} \vert^2 \bigg) \vert v_d \vert^2 
\end{align*}
with $\epsilon > 0$ arbitrary and $\rho = \frac{1}{2L} \log\big(\frac{\alpha-1}{\alpha+1}\big) = \operatorname{Re}\lambda_{2,m}$. The latter inequality has been obtained by using Young's inequality $ab\leq\frac{a^2}{2\epsilon}+\frac{\epsilon b^2}{2}$. Defining now $E = \begin{pmatrix} 1 & 0 & \ldots & 0 \end{pmatrix}$ and $\tilde{K} = \begin{pmatrix} K & 0 & 0 & 0 \end{pmatrix}$, we infer that $v = E X$ and $v_d = \tilde{K} X$. Furthermore, using the Cauchy-Schwarz inequality,
\begin{align*}
\zeta_1^2 & = \bigg( \sum_{k\geq N+1} c_{1,k} w_{1,k} \bigg)^2 \leq \underbrace{\sum_{k\geq N+1} \vert c_{1,k} \vert^2}_{= S_{c,1,N}} \times \sum_{k\geq N+1} \vert w_{1,k} \vert^2 , \\
\zeta_2^2 & = \bigg( \sum_{\vert l \vert\geq M+1} c_{2,l} w_{2,l} \bigg)^2 \leq \underbrace{\sum_{\vert l \vert\geq M+1} \vert c_{2,l} \vert^2}_{= S_{c,2,M}} \times \sum_{\vert l \vert\geq M+1} \vert w_{2,l} \vert^2 .
\end{align*}
Defining $S_{a,N,M} = \sum_{n \geq N+1} \vert a_{1,n} \vert^2 + \sum_{\vert m \vert \geq M+1} \vert a_{2,m} \vert^2$ and $S_{b,N,M} = \sum_{n \geq N+1} \vert b_{1,n} \vert^2 + \sum_{\vert m \vert \geq M+1} \vert b_{2,m} \vert^2$, the combination of all above estimates gives 
\begin{equation}\label{eq: dotV}
\dot{V} + 2 \delta V \leq \bar{\tilde{X}}^\top \Theta \tilde{X} 
+ \sum_{n \geq N+1} n^2 \Gamma_{1,n} \vert w_{1,n} \vert^2  
+ \Gamma_{2,M} \sum_{\vert m \vert \geq M+1} \vert w_{2,m} \vert^2 
\end{equation}
with
\begin{subequations}
\begin{align}
\Theta & =
\begin{pmatrix} 
\Theta_{1,1} & P \mathcal{L} & P \mathcal{L} \\
\mathcal{L}^\top P & -\eta_1 & 0 \\
\mathcal{L}^\top P & 0 & -\eta_2
\end{pmatrix} , \label{eq: stab condition - Theta} \\
\Gamma_{1,n} & = 2 \Big( \lambda_{1,n} + \frac{n^2}{\epsilon} + \delta \Big) + \frac{\eta_1 S_{c,1,N}}{n^2} , \\
\Gamma_{2,M} & = 2 \Big( \rho + \frac{1}{\epsilon} + \delta \Big) + \eta_2 S_{c,2,M} ,  \label{eq: stab condition - Gamma2m}
\end{align}
\end{subequations}
where $\Theta_{1,1} = \bar{F}^\top P + P F + 2\delta P + \epsilon S_{a,N,M} E^\top E + \epsilon S_{b,N,M} \tilde{K}^\top \tilde{K}$ and $\eta_1,\eta_2 > 0$ arbitrary. Choosing $\epsilon > L^2/\pi^2$, we have $\Gamma_{1,n} = 2 \Big( - \left( \frac{\pi^2}{L^2} - \frac{1}{\epsilon} \right) n^2 + c + \delta \Big) + \frac{\eta_1 S_{c,1,N}}{n^2} \leq \Gamma_{1,N+1}$ for any $n \geq N+1$. Hence, $\dot{V} + 2 \delta V \leq 0$ (yielding the claimed stability estimate \eqref{eq: exponential stability}) provided there exist integers $N \geq N_0 + 1$ and $M \in\mathbb{N}$, real numbers $\epsilon > L^2/\pi^2$ and $\eta_1,\eta_2 > 0$, and a Hermitian matrix $P \succ 0$ such that
\begin{equation}\label{constraints_NM}
\Theta \preceq 0 , \quad \Gamma_{1,N+1} \leq 0 , \quad \Gamma_{2,M} \leq 0 .
\end{equation}
To conclude the proof, it thus remains to prove that the constraints \eqref{constraints_NM} are always feasible for $N,M$ chosen large enough. Recalling that the matrix $F$ is defined by \eqref{eq: def matrix F}, it is easy to see that $F$ is Hurwitz with eigenvalues of real part less than $-\delta < 0$, while the application of \cite[Appendix]{lhachemi2020finite} shows that the solution $P \succ 0$ of the Lyapunov equation $\bar{F}^\top P + PF + 2\delta P = -I$ is such that $\Vert P \Vert = O(1)$ as $N,M \rightarrow + \infty$. Furthermore, $\Vert E \Vert$, $\Vert \mathcal{L} \Vert$, and $\Vert \tilde{K} \Vert$ are constants, not depending on $N,M$ while $S_{a,N,M},S_{b,N,M},S_{c,1,N},S_{c,2,M} \rightarrow 0$ as $N,M \rightarrow + \infty$. Recalling that, by assumption, $\rho < - \delta$, we take $\epsilon > L^2/\pi^2$ large enough so that $\rho + \frac{1}{\epsilon} + \delta < 0$. We finally take
\begin{equation*}
\eta_1 = \left\{\begin{array}{cl}
\frac{1}{\sqrt{S_{c,1,N}}} & \mathrm{if}\; S_{c,1,N} \neq 0, \\
N & \textrm{otherwise},
\end{array}\right.
\qquad
\eta_2 = \left\{\begin{array}{cl}
\frac{1}{\sqrt{S_{c,2,M}}} & \mathrm{if}\; S_{c,2,M} \neq 0, \\
M & \textrm{otherwise},
\end{array}\right.
\end{equation*}
which, in particular, implies that $\eta_1,\eta_2 \rightarrow +\infty$ while $\eta_1 S_{c,1,N} , \eta_2 S_{c,2,M} \rightarrow 0$ as $N,M \rightarrow +\infty$. With these choices, it can be seen that $\Gamma_{1,N+1} \rightarrow -\infty$ as $N \rightarrow +\infty$ and $\Gamma_{2,M} \rightarrow 2 \left( \rho + \frac{1}{\epsilon} + \delta \right) < 0$ as $M \rightarrow +\infty$. Hence $\Gamma_{1,N+1} \leq 0$ and $\Gamma_{2,M} \leq 0$ for all $N,M$ large enough. Finally, the Schur complement theorem applied to
\begin{equation*}
\Theta =
\begin{pmatrix} 
-I + \epsilon S_{a,N,M} \bar{E}^\top E + \epsilon S_{b,N,M} \tilde{K}^\top \tilde{K} & P \mathcal{L} & P \mathcal{L} \\
\mathcal{L}^\top P & -\eta_1 & 0 \\
\mathcal{L}^\top P & 0 & -\eta_2
\end{pmatrix} .
\end{equation*}
shows that $\Theta \preceq 0$ if and only if 
\begin{equation*}
-I + \epsilon S_{a,N,M} E^\top E + \epsilon S_{b,N,M} \tilde{K}^\top \tilde{K} 
 + \begin{pmatrix} P \mathcal{L} & P \mathcal{L} \end{pmatrix}
\begin{pmatrix} \frac{1}{\eta_1} & 0 \\
0 & \frac{1}{\eta_2} \end{pmatrix} 
\begin{pmatrix} \mathcal{L}^\top P \\ \mathcal{L}^\top P \end{pmatrix}
\preceq 0 .
\end{equation*}
We note that 
\begin{equation*}
-I + \epsilon S_{a,N,M} E^\top E + \epsilon S_{b,N,M} \tilde{K}^\top \tilde{K} 
\preceq - \left( 1 - \epsilon \left( S_{a,N,M} \Vert E \Vert^2 +  S_{b,N,M} \Vert \tilde{K} \Vert^2 \right) \right) I 
\preceq - \frac{1}{2} I
\end{equation*}
for $N,M$ taken large enough, because $\epsilon ( S_{a,N,M} \Vert E \Vert^2 +  S_{b,N,M} \Vert \tilde{K} \Vert^2 ) \rightarrow 0$ as $N,M \rightarrow + \infty$. In this case,
\begin{multline*}
-I + \epsilon S_{a,N,M} E^\top E + \epsilon S_{b,N,M} \tilde{K}^\top \tilde{K} 
+ \begin{pmatrix} P \mathcal{L} & P \mathcal{L} \end{pmatrix}
\begin{pmatrix} \frac{1}{\eta_1} & 0 \\
0 & \frac{1}{\eta_2} \end{pmatrix} 
\begin{pmatrix} \mathcal{L}^\top P \\ \mathcal{L}^\top P \end{pmatrix} \\
\preceq
- \frac{1}{2} I + \left( \frac{1}{\eta_1} + \frac{1}{\eta_2} \right) P \mathcal{L} \mathcal{L}^\top P .
\end{multline*}
Recalling that $\eta_1,\eta_2 \rightarrow +\infty$ and $\Vert P \Vert = O(1)$ as $N,M \rightarrow +\infty$ while $\Vert \mathcal{L} \Vert$ is a constant, not depending on $N,M$, the latter matrix is $\preceq 0$ for $N,M$ chosen large enough. This completes the proof.
\end{proof}

\section{Extension to a pointwise Dirichlet or Neumann measurement for the heat equation}\label{sec: extemsion}
\noindent
In this section, instead of the distributed measurement \eqref{eq: measurement heat distributed} of the reaction-diffusion PDE, we consider the case of either the Dirichlet trace
\begin{equation}\label{eq: Dirichlet measurement}
	y_o(t) = y(t,\xi_{p})
\end{equation}
or the Neumann trace
\begin{equation}\label{eq: Naumann measurement}
	y_o(t) = \partial_x y(t,\xi_{p})
\end{equation}
for a fixed $\xi_{p}\in[0,L]$. We infer from \eqref{eq: change of variable} and \eqref{eq: projection system trajectory into Riesz basis} that either
$$
	y_o(t)  = w(t,\xi_{p}) 
	 = \sum_{n\in\mathbb{N}^*} w_{1,n}(t) \underbrace{\phi_{1,n}^1(\xi_{p})}_{=c_{1,n}} + \sum_{m \in\mathbb{Z}} w_{2,m}(t) \underbrace{\phi_{2,m}^1(\xi_{p})}_{=c_{2,m}}
$$
or
$$
	y_o(t) = \partial_x w(t,\xi_{p}) 
	 = \sum_{n\in\mathbb{N}^*} w_{1,n}(t) \underbrace{(\phi_{1,n}^1)'(\xi_{p})}_{=c_{1,n}} + \sum_{m \in\mathbb{Z}} w_{2,m}(t) \underbrace{(\phi_{2,m}^1)'(\xi_{p})}_{=c_{2,m}}. 
$$

\begin{theorem}
Considering either the Dirichlet measurement \eqref{eq: Dirichlet measurement} or Neumann measurement \eqref{eq: Naumann measurement} for some $\xi_{p}\in[0,L]$, the same statement as Theorem \ref{thm: main result 2} holds true for the Hilbert space $Y = \mathcal{H}^1$ defined by \eqref{eq: space H1}. 
\end{theorem}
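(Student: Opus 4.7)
The plan is to adapt the proof of Theorem~\ref{thm: main result 2} with the new observation coefficients $c_{1,n}=\phi^1_{1,n}(\xi_p)$ and $c_{2,m}=\phi^1_{2,m}(\xi_p)$ in the Dirichlet case, and $c_{1,n}=(\phi^1_{1,n})'(\xi_p)$ and $c_{2,m}=(\phi^1_{2,m})'(\xi_p)$ in the Neumann case. The Kalman observability condition is once again $c_{1,n}\neq 0$ for $n\in\{1,\ldots,N_0\}$, which, since $\phi^1_{1,n}(x)=\sqrt{2/L}\sin(n\pi x/L)$, amounts to a non-resonance condition on $\xi_p$ with the first $N_0$ parabolic modes. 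The observer gain $L$, the feedback gain $K$, and the finite-dimensional $X$-dynamics are defined exactly as before.

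The critical step --- and the reason the conclusion is restricted to $Y=\mathcal{H}^1$ --- is the treatment of the truncation remainder $\zeta_1=\sum_{k\geq N+1}c_{1,k}w_{1,k}$. The direct bound $\zeta_1^2\leq S_{c,1,N}\sum|w_{1,k}|^2$ used in the original proof fails here, since $S_{c,1,N}=\sum_{k\geq N+1}|c_{1,k}|^2$ diverges: $|c_{1,k}|=O(1)$ in the Dirichlet case and $|c_{1,k}|=O(k)$ in the Neumann case. The idea is to replace it by the weighted Cauchy--Schwarz inequality
\[
\zeta_1^2 \leq \tilde S_{c,1,N}^{(s)}\sum_{k\geq N+1}k^{2s}|w_{1,k}|^2, \qquad \tilde S_{c,1,N}^{(s)} = \sum_{k\geq N+1}\frac{|c_{1,k}|^2}{k^{2s}},
\]
with $s=1$ for Dirichlet and $s=2$ for Neumann, so that $\tilde S_{c,1,N}^{(s)}\to 0$ as $N\to\infty$. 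In the Dirichlet case this adds the constant term $\eta_1\tilde S_{c,1,N}^{(1)}$ to the coefficient of each $n^2|w_{1,n}|^2$ in $\dot V+2\delta V$, which is readily dominated by the dissipation $2\lambda_{1,n}\sim -2n^2\pi^2/L^2$ as soon as $n\geq N+1$ is large. In the Neumann case the contribution becomes instead $\eta_1\tilde S_{c,1,N}^{(2)}n^2$ per weighted mode, and is absorbed by taking $\epsilon$ slightly larger than $L^2/\pi^2$ so that the strict negativity of the quartic dissipation $-2(\pi^2/L^2-1/\epsilon)n^4$ arising from $2n^2\lambda_{1,n}+(2/\epsilon)n^4|w_{1,n}|^2$ is preserved, and by choosing $\eta_1=1/\sqrt{\tilde S_{c,1,N}^{(2)}}$ so that $\eta_1\tilde S_{c,1,N}^{(2)}=\sqrt{\tilde S_{c,1,N}^{(2)}}\to 0$ while $\eta_1\to+\infty$. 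The analogous treatment of $\zeta_2$ relies on $\Vert\phi^1_{2,m}\Vert_{L^\infty}=O(1/m^2)$ from Lemma~\ref{lem: A riesz spectral} in the Dirichlet case, and on $\Vert(\phi^1_{2,m})'\Vert_{L^\infty}=O(1/|m|^{3/2})$ from the proof of Lemma~\ref{lemma: Riesz basis H1} in the Neumann case, so that $S_{c,2,M}\to 0$ in both scenarios.

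Once the modified $\Gamma_{1,N+1}$ and $\Gamma_{2,M}$ have been shown negative for $N,M$ chosen large enough, the Schur-complement argument yielding $\Theta\preceq 0$ carries over verbatim: $\Vert E\Vert$, $\Vert\mathcal{L}\Vert$, $\Vert\tilde K\Vert$ remain bounded and $\Vert P\Vert=O(1)$ uniformly in $N,M$ as in~\cite[Appendix]{lhachemi2020finite}, so the exponential estimate \eqref{eq: exponential stability} follows. The hard part will be the Neumann case, where the unbounded growth of $(\phi^1_{1,k})'(\xi_p)$ compels the use of second-order weighted Cauchy--Schwarz together with a delicate balance between the quartic dissipation budget, the Young parameter $\epsilon$, and the absorption parameter $\eta_1$; the Dirichlet case, by contrast, is a straightforward adaptation of Theorem~\ref{thm: main result 2} in which $S_{c,1,N}$ is merely replaced by the weighted tail $\tilde S_{c,1,N}^{(1)}$.
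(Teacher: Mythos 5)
Your proposal follows essentially the same route as the paper: a weighted Cauchy--Schwarz bound on the truncation remainders $\zeta_1,\zeta_2$, exploiting the $n^2$ weights already present in the $\mathcal{H}^1$ Lyapunov functional \eqref{eq: V H1 norm}, together with the decay $\Vert\phi^1_{2,m}\Vert_{L^\infty}=O(1/m^2)$ and $\Vert(\phi^1_{2,m})'\Vert_{L^\infty}=O(1/\vert m\vert^{3/2})$ for the hyperbolic tail. The only substantive difference is in the Neumann case: the paper uses the weight $n^{7/2}$, so the residual contribution to $\Gamma_{1,n}$ grows like $n^{3/2}$ and is absorbed by the $-\mathrm{Cst}\,n^{2}$ dissipation for free, whereas your weight $n^{4}$ produces a contribution $\eta_1\tilde S^{(2)}_{c,1,N}n^2$ that competes at the same order as the dissipation and therefore requires the balance $\eta_1\tilde S^{(2)}_{c,1,N}<2(\pi^2/L^2-1/\epsilon)$ you correctly impose; both choices work. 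One small omission: the Schur-complement step does not carry over entirely verbatim, because the block $C_1=(c_{1,N_0+1},\ldots,c_{1,N},c_{2,0},\ldots)$ now has unbounded norm as $N\to\infty$ ($c_{1,k}=O(1)$ or $O(k)$), so to invoke \cite[Appendix]{lhachemi2020finite} and keep $\Vert P\Vert=O(1)$ one must first rescale the components of $E_2$ so that the rescaled $C_1$ satisfies $\Vert C_1\Vert=O(1)$, as the paper points out.
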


\smallskip

\begin{proof}
The assumption that $c_{1,n} \neq 0$ for any $n\in\{1,\ldots,N_0\}$ implies the controllability of the pair $(A_0,C_0)$. Now, the proof follows the one of Theorem~\ref{thm: main result 2} by using the Lyapunov functional \eqref{eq: V H1 norm}. The only difference lies in the estimates of $\zeta_1 = \sum_{n\geq N+1} c_{1,n} w_{1,n}$ and of $\zeta_2 = \sum_{\vert m \vert \geq M+1} c_{2,m}^1(\xi_{p}) w_{2,m}$. 
	
In the case of the Dirichlet measurement, by Lemma~\ref{lem: eigenstructures of A}, we have $\phi_{1,n}^1(\xi_{p}) = O(1)$, hence 
\begin{equation*}
\zeta_1^2 \leq \underbrace{\sum_{n\geq N+1} \frac{\vert \phi_{1,n}^1(\xi_{p}) \vert^2}{n^2}}_{= S_{c,1,N} < +\infty} \times \sum_{n\geq N+1} n^2 \vert w_{1,n} \vert^2 .
\end{equation*}
Moreover, we showed in the proof of Lemma~\ref{lem: A riesz spectral} that $\Vert \phi_{2,m}^1 \Vert_{L^\infty} = O(1/m^2)$, yielding
\begin{equation*}
\zeta_2^2 \leq \underbrace{\sum_{\vert m \vert\geq M+1} \vert \phi_{2,m}^1(\xi_p) \vert^2}_{= S_{c,2,M} < +\infty} \times \sum_{\vert m \vert\geq M+1} \vert w_{2,m} \vert^2 .
\end{equation*}
Then, proceeding as in the proof of Theorem~\ref{thm: main result 2}, we obtain \eqref{eq: dotV} with 
\begin{equation*}
\Gamma_{1,n} = 2 \Big( \lambda_{1,n} + \frac{n^2}{\epsilon} + \delta \Big) + \eta_1 S_{c,1,N} ,
\end{equation*}
while $\Theta$ and $\Gamma_{2,M}$ are defined by \eqref{eq: stab condition - Theta} and \eqref{eq: stab condition - Gamma2m}, respectively. In order to apply \cite[Appendix]{lhachemi2020finite} to ensure the feasibility of the design constraints, the components of $E^2$ have to be rescaled in order to ensure that $\Vert C_1 \Vert = O(1)$ as $N \rightarrow +\infty$ (see \cite{lhachemi2020finite}). The proof is then similar to the one of Theorem~\ref{thm: main result 2}.

In the case of the Neumann measurement, by Lemma~\ref{lem: eigenstructures of A}, we have $(\phi_{1,n}^1)'(\xi_{p}) = O(n)$, hence 
\begin{equation*}
\zeta_1^2 \leq \underbrace{\sum_{n\geq N+1} \frac{\vert (\phi_{1,n}^1)'(\xi_{p}) \vert^2}{n^{7/2}}}_{= S_{c,1,N} < \infty} \times \sum_{n\geq N+1} n^{7/2} \vert w_{1,n} \vert^2 .
\end{equation*}
Moreover, following the proof in Lemma~\ref{lem: A riesz spectral} that $\Vert (\phi_{2,m}^1)' \Vert_{L^\infty} = O(1/\vert m \vert^{3/2})$, we have
\begin{equation*}
\zeta_2^2 \leq \underbrace{\sum_{\vert m \vert\geq M+1} \vert \phi_{2,m}^1(\xi_p) \vert^2}_{= S_{c,2,M} < \infty} \times \sum_{\vert m \vert\geq M+1} \vert w_{2,m} \vert^2 .
\end{equation*}
Then, proceeding as in the proof of Theorem~\ref{thm: main result 2}, we obtain \eqref{eq: dotV} with 
\begin{equation*}
\Gamma_{1,n} = 2 \Big( \lambda_{1,n} + \frac{n^2}{\epsilon} + \delta \Big) + n^{3/2} \eta_1 S_{c,1,N} ,
\end{equation*}
while $\Theta$ and $\Gamma_{2,M}$ are defined by \eqref{eq: stab condition - Theta} and \eqref{eq: stab condition - Gamma2m}, respectively. The proof is then similar to the one of Theorem~\ref{thm: main result 2}.
\end{proof}

\section{Conclusion}
\noindent
We have studied controllability and stabilization properties for the wave-reaction-diffusion PDE cascade \eqref{eq: cascade equation}, where the solution of the wave equation appears as a source term in the heat equation. The exact controllability Hilbert space has been characterized thanks to an Ingham-M{\"u}ntz type inequality. An explicit output feedback control strategy has then been derived. 

Although the system \eqref{eq: cascade equation} is a simple model, it already reveals an interesting complexity of the exact controllability space, depending on the coupling function, and it  may serve as a reference to study more elaborate models.

For instance, it is natural to consider the ``symmetric'' cascade model where the solution of the heat equation appears as a source term in the wave equation. While the study of the exact controllability of the system seems to be achievable using the same approach, the actual design of an explicit feedback control strategy seems much more challenging because the preliminary shift of the spectrum of the wave equation achieved by \eqref{eq: preliminary feedback} cannot be applied anymore. This is left as an open question for future research. 

Throughout the article, we have assumed $c$ constant in \eqref{eq: cascade equation - 1}. Actually, it is not difficult to generalize all results of that paper to the case $c\in L^\infty(0,L)$, but this complicates the spectral analysis of the operators. For large values of $n$ and $m$, the eigenvalues and eigenfunctions keep, asymptotically, the form given in Lemmas \ref{lem: eigenstructures of A0} and \ref{lem: adjoint A0*}, but for low frequencies their expression may be very different, thus having an impact on the coefficients $\gamma_n$ defined by \eqref{def_gamma_n}. Then, more technical assumptions appear to characterize controllability.

Another question is to explore controllability and stabilization properties for multi-dimensional versions of \eqref{eq: cascade equation}, in a general domain. The study is expected to be much more challenging because the formalism of Riesz basis cannot be used.

\appendix
\section{Appendix}
\subsection{Defining $\mathcal{B}_0$ by transposition}\label{sec_app_A}
\noindent
At the beginning of Section \ref{sec: exact controllability}, following the theory of well-posed linear systems developed in \cite{tucsnakweiss} (see also \cite{trelat_SB}), we have written the control system \eqref{eq: cascade equation} in the abstract form
\begin{equation}\label{abstract_system}
\dot{\mathcal{X}}(t)=\mathcal{A}_0 \mathcal{X}(t)+\mathcal{B}_0 u(t)
\end{equation}
with $\mathcal{A}_0:D(\mathcal{A}_0)\rightarrow\mathcal{H}^0$ defined by \eqref{def_A0}, and the Hilbert space $\mathcal{H}^0$ is defined by \eqref{eq: state-space}.
In this appendix, we show how to define the control operator $\mathcal{B}_0$ by transposition. 
Recall that the adjoint operator $\mathcal{A}_0^*:D(\mathcal{A}_0^*)\rightarrow\mathcal{H}^0$ is defined by \eqref{def_A0*}.
Following \cite{trelat_SB, tucsnakweiss}, identifying $\mathcal{H}^0$ with its dual, we search $\mathcal{B}_0\in L(\mathbb{R},D(\mathcal{A}_0^*)')$, or equivalently, $\mathcal{B}_0^*\in L(D(\mathcal{A}_0^*),\mathbb{R})$, where $D(\mathcal{A}_0^*)'$ is the dual of $D(\mathcal{A}_0^*)$ with respect to the pivot space $\mathcal{H}^0$.
Note that $D(\mathcal{A}_0^*) \subset \mathcal{H}^0\subset D(\mathcal{A}_0^*)'$ with continuous and dense embeddings.
Since the equality $\dot{\mathcal{X}}=\mathcal{A}_0 \mathcal{X}+\mathcal{B}_0 u$ is written in the space $D(\mathcal{A}_0^*)'$, using the duality bracket $\langle\ ,\ \rangle_{D(\mathcal{A}_0^*)',D(\mathcal{A}_0^*)}$, we have, for any $\phi\in D(\mathcal{A}_0^*)$, 
\begin{equation*}
\langle\dot{\mathcal{X}}(t),\phi\rangle_{D(\mathcal{A}_0^*)',D(\mathcal{A}_0^*)} 
= \langle\mathcal{A}_0 \mathcal{X}(t) + \mathcal{B}_0u(t),\phi\rangle_{D(\mathcal{A}_0^*)',D(\mathcal{A}_0^*)} 
= \langle \mathcal{X}(t),\mathcal{A}_0^*\phi\rangle_{\mathcal{H}^0} + u(t)\,\mathcal{B}_0^*\phi .
\end{equation*}
Besides, taking $\mathcal{X}(t,\cdot)=(y(t,\cdot),z(t,\cdot),\partial_tz(t,\cdot))^\top$ a sufficiently regular solution of \eqref{eq: cascade equation} (so that $\dot{\mathcal{X}}(t)\in\mathcal{H}^0$), and denoting $\phi=(\phi_3,\phi_3,\phi_3)^\top$, a straightforward computation using \eqref{eq: cascade equation} and integrations by parts shows that
$$
\langle\dot{\mathcal{X}}(t),\phi\rangle_{\mathcal{H}^0} = \langle \mathcal{X}(t),\mathcal{A}_0^*\phi\rangle_{\mathcal{H}^0} + u(t)\,\phi_3(L).
$$
Since $\langle\dot{\mathcal{X}}(t),\phi\rangle_{D(\mathcal{A}_0^*)',D(\mathcal{A}_0^*)}  = \langle\dot{\mathcal{X}}(t),\phi\rangle_{\mathcal{H}^0}$, a density argument finally leads to
$$
\mathcal{B}_0^*\phi = \phi_3(L)\qquad\forall\phi=(\phi_1,\phi_2,\phi_3)^\top\in D(\mathcal{A}_0^*).
$$
This formula defines the control operator $\mathcal{B}_0\in L(\mathbb{R},D(\mathcal{A}_0^*)')$ by transposition.
Following \cite[Section 5.1.4]{trelat_SB} or \cite[Proposition 10.9.1]{tucsnakweiss}, one could then define $\mathcal{B}_0$ with an abstract formula, but this is not useful in this paper.

\subsection{Well-posedness and admissibility property}\label{sec_app_B}
\noindent Following \cite{trelat_SB, tucsnakweiss}, we say that the control system \eqref{abstract_system} is well-posed in $\mathcal{H}^0$ if, for all $T>0$ and $u\in L^2(0,T)$, any solution of \eqref{abstract_system} such that $\mathcal{X}(0)\in\mathcal{H}^0$ satisfies $\mathcal{X}(t)\in\mathcal{H}^0$ for any $t>0$. Equivalently, we say that the control operator $\mathcal{B}_0$ is \emph{admissible}.
By \cite[Proposition 5.13 and Remark 5.56]{trelat_SB} or \cite[Theorem 4.4.3]{tucsnakweiss}, $\mathcal{B}_0$ is admissible if and only, for some $T>0$ (and equivalently, for any $T>0$), there exists $K_T>0$ such that 
$\int_0^T \vert \mathcal{B}_0^*\mathcal{X}(t) \vert^2\, \mathrm{d}t \leq K_T \Vert \mathcal{X}(0)\Vert_{\mathcal{H}^0}^2$, i.e., 
\begin{equation}\label{adm}
\int_0^T \vert \mathcal{X}^3(t,L)\vert^2\, \mathrm{d}t 
\leq K_T\left( \Vert \mathcal{X}^1(0)\Vert_{L^2}^2 + \Vert\partial_x \mathcal{X}^2(0)\Vert_{L^2}^2 + \Vert \mathcal{X}^3(0)\Vert_{L^2}^2 \right)
\end{equation}
for any solution $t\mapsto \mathcal{X}(t,\cdot)=(\mathcal{X}^1(t,\cdot),\mathcal{X}^2(t,\cdot),\mathcal{X}^3(t,\cdot))^\top$ of the dual system $\dot{\mathcal{X}}(t) = \mathcal{A}_0^*\mathcal{X}(t)$, i.e., of
\begin{subequations}\label{dual_system}
    \begin{align}
        &\partial_t \mathcal{X}^1 = \partial_{xx} \mathcal{X}^1 + c \mathcal{X}^1 , && \mathcal{X}^1(t,0) = \mathcal{X}^1(t,L) = 0, \label{dual_system_1} \\
        &\partial_t \mathcal{X}^2 = P_\beta \mathcal{X}^1 - \mathcal{X}^3 , && \mathcal{X}^2(t,0) = \partial_x \mathcal{X}^2(t,L) = 0, \label{dual_system_2} \\
        &\partial_t \mathcal{X}^3 = -\partial_{xx} \mathcal{X}^2 , && \mathcal{X}^3(t,0) = 0 . \label{dual_system_3} 
    \end{align}
\end{subequations}
Using \eqref{dual_system_2} and \eqref{dual_system_3}, we have $\partial_{tt}\mathcal{X}^3=\partial_{xx}\mathcal{X}^3-\beta \mathcal{X}^1$ with $\mathcal{X}^3(t,0) = \partial_x \mathcal{X}^3(t,L)=0$ (the latter, because $\partial_t\partial_x \mathcal{X}^2(t,L)=0$) and $\mathcal{X}^2(t,x) = - \int_0^x \int_\tau^L \partial_t \mathcal{X}^3(s,x) \,\mathrm{d}s\,\mathrm{d}\tau$.
Since $\Vert\partial_x \mathcal{X}^2(0)\Vert_{L^2} = \Vert\partial_t \mathcal{X}^3(0)\Vert_{H^{-1}_{(0)}}$, where $H^{-1}_{(0)}(0,L)$ is the dual of $H^1_{(0)}(0,L)$ with respect to the pivot space $L^2(0,L)$, the admissibility inequality \eqref{adm} is equivalent to
\begin{equation}\label{adm1}
\int_0^T \vert \mathcal{X}^3(t,L)\vert^2\, \mathrm{d}t 
\leq K_T\Big( \Vert \mathcal{X}^1(0)\Vert_{L^2}^2 + \Vert \mathcal{X}^3(0)\Vert_{L^2}^2 + \Vert\partial_t \mathcal{X}^3(0)\Vert_{H^{-1}_{(0)}}^2 \Big)
\end{equation}
for any solution of
\begin{subequations}\label{dual_system1}
\begin{align}
        &\partial_t \mathcal{X}^1 = \partial_{xx} \mathcal{X}^1 + c \mathcal{X}^1 ,  \label{dual_system1_1} \\
        &\partial_{tt} \mathcal{X}^3 = \partial_{xx} \mathcal{X}^3 - \beta \mathcal{X}^1 ,  \label{dual_system1_2}     \\
        &\mathcal{X}^1(t,0) = \mathcal{X}^1(t,L) = \mathcal{X}^3(t,0) = \partial_x \mathcal{X}^3(t,L) = 0 . \label{dual_system1_3} 
\end{align}
\end{subequations}

\begin{lemma}\label{lem_adm}
The admissibility inequality \eqref{adm1} holds.
Therefore, the control system \eqref{abstract_system} is well-posed in $\mathcal{H}^0$.
\end{lemma}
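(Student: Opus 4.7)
The plan is to exploit the fact that the reaction--diffusion equation \eqref{dual_system1_1} for $\mathcal{X}^1$ is decoupled from the wave-type equation \eqref{dual_system1_2} for $\mathcal{X}^3$. By standard parabolic theory, $\mathcal{X}^1 \in C([0,T];L^2(0,L))$ with $\Vert \mathcal{X}^1(t)\Vert_{L^2}\leq e^{ct}\Vert \mathcal{X}^1(0)\Vert_{L^2}$, so the source $\beta\mathcal{X}^1$ belongs to $L^2((0,T)\times(0,L))$ with norm bounded by $\Vert \beta\Vert_{L^\infty}\sqrt{T}\,e^{cT}\Vert \mathcal{X}^1(0)\Vert_{L^2}$. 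I would then split $\mathcal{X}^3=u_h+u_p$, where $u_h$ solves the homogeneous wave equation $\partial_{tt}u_h=\partial_{xx}u_h$ with initial data $(\mathcal{X}^3(0),\partial_t\mathcal{X}^3(0))\in L^2(0,L)\times H^{-1}_{(0)}(0,L)$ and boundary conditions $u_h(t,0)=\partial_x u_h(t,L)=0$, while $u_p$ solves the same wave equation with source $-\beta\mathcal{X}^1$ and zero initial data.

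For $u_p$, the multiplier $\partial_t u_p$ combined with $u_p(t,0)=0$ and $\partial_x u_p(t,L)=0$ (both of which annihilate the boundary terms) yields the energy estimate $\tfrac12(\Vert \partial_t u_p(t)\Vert_{L^2}^2+\Vert \partial_x u_p(t)\Vert_{L^2}^2)\leq \tfrac12\bigl(\int_0^t\Vert \beta\mathcal{X}^1(s)\Vert_{L^2}\,\mathrm{d}s\bigr)^2$. Since $u_p(t,0)=0$, the Poincaré inequality upgrades this to a bound on $\Vert u_p(t)\Vert_{H^1}$, so $\sup_{t\leq T}\Vert u_p(t)\Vert_{H^1}^2\leq C_T\Vert \mathcal{X}^1(0)\Vert_{L^2}^2$. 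The 1D Sobolev embedding $H^1(0,L)\hookrightarrow C^0([0,L])$ then yields $\int_0^T\vert u_p(t,L)\vert^2\,\mathrm{d}t \leq C'_T\Vert \mathcal{X}^1(0)\Vert_{L^2}^2$.

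For $u_h$, I would diagonalize in the $L^2$-orthonormal basis $\phi_m(x)=\sqrt{2/L}\sin(\lambda_m x)$, $\lambda_m=(2m+1)\pi/(2L)$, $m\geq 0$, of $-\partial_{xx}$ on $(0,L)$ with Dirichlet at $0$ and Neumann at $L$. Writing $\mathcal{X}^3(0)=\sum_m\alpha_m\phi_m$ and $\partial_t\mathcal{X}^3(0)=\sum_m\beta_m\phi_m$, one has $\Vert \mathcal{X}^3(0)\Vert_{L^2}^2=\sum_m\vert \alpha_m\vert ^2$ and $\Vert \partial_t\mathcal{X}^3(0)\Vert_{H^{-1}_{(0)}}^2$ equivalent to $\sum_m\vert \beta_m\vert ^2/\lambda_m^2$. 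Since $\phi_m(L)=\sqrt{2/L}(-1)^m$, the solution formula reads $u_h(t,L) = \sqrt{2/L}\sum_m(-1)^m[\alpha_m\cos(\lambda_m t) + (\beta_m/\lambda_m)\sin(\lambda_m t)]$. The uniform spectral gap $\lambda_{m+1}-\lambda_m=\pi/L$ allows the classical Bessel-type inequality for non-harmonic Fourier series (the easy converse of Ingham, valid for every $T>0$) to yield $\int_0^T\vert u_h(t,L)\vert^2\,\mathrm{d}t\leq C_T\sum_m(\vert \alpha_m\vert ^2+\vert \beta_m\vert ^2/\lambda_m^2)$, which is exactly the wave-data part of the right-hand side of \eqref{adm1}. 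Summing the two estimates and using $\vert \mathcal{X}^3(t,L)\vert^2\leq 2\vert u_h(t,L)\vert^2+2\vert u_p(t,L)\vert^2$ produces \eqref{adm1}.

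The main obstacle will be justifying the trace identity for $u_h(t,L)$ at the low regularity level $L^2\times H^{-1}_{(0)}$: the solution is defined only by transposition and does not have a classical pointwise trace at $x=L$. I would handle this by density, first establishing the estimate on smooth initial data (for which all series converge absolutely and all manipulations are elementary) and then extending the inequality by continuity to $L^2\times H^{-1}_{(0)}$ using the uniform Bessel bound. This kind of argument is standard in the hidden regularity/admissibility literature for hyperbolic boundary value problems, and the well-posedness claim in $\mathcal{H}^0$ then follows from the characterization of admissibility recalled just before the lemma.
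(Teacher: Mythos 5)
Your proof is correct and follows essentially the same route as the paper: both split $\mathcal{X}^3$ into the free wave evolution, whose boundary trace is estimated by a spectral expansion in the mixed Dirichlet--Neumann eigenbasis $\sqrt{2/L}\sin(\lambda_k x)$ together with a Parseval/Bessel-type bound, plus the contribution of the source $-\beta\mathcal{X}^1$ controlled via the decoupled heat component. The only cosmetic difference is that you bound the source part by an explicit energy estimate combined with the trace embedding $H^1(0,L)\hookrightarrow C^0([0,L])$, whereas the paper invokes the Duhamel formula; both are standard and interchangeable here.
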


\smallskip

\begin{proof}
We first establish \eqref{adm1} with $\mathcal{X}^1=0$. We have, then, a 1D wave equation with Dirichlet condition at the left boundary and Neumann condition at the right boundary. The eigenvalues of the corresponding Laplacian operator are $\lambda_k=\frac{\pi}{2L}+\frac{k\pi}{L}$, for $k\in\mathbb{N}$, with associated eigenfunctions $\sqrt{\frac{2}{L}}\sin(\lambda_k t)$. Now, expanding $\mathcal{X}^3(t,x)$ as a spectral Fourier series in this basis of eigenfunctions, and using the Parseval theorem as in \cite[Section 5.2.4.1]{trelat_SB}, \eqref{adm1} follows, with $\mathcal{X}^1=0$.

Now, we treat $\mathcal{X}^1$ as a source term in the wave equation \eqref{dual_system1_2}, and \eqref{adm} follows by the Duhamel formula.
\end{proof}

\subsection{Ingham-M\"untz inequality}\label{sec_IM}
\noindent 
In this section, we report on the Ingham-M\"untz inequality, which is intrumental in this paper. 

\begin{theorem}\label{thm_IM}
Let $(\lambda_{1,n})_{n\in\mathbb{N}^*}$ be a sequence of distinct complex numbers. We assume that there exist $\alpha>1$ such that the sequence $(\lambda_{1,n}/n^\alpha)_{n\in\mathbb{N}^*}$ is bounded, and that there exist $n_0\in\mathbb{N}^*$, $C_1,C_2>0$ and such that:
$$
-\mathrm{Re}(\lambda_{1,n}) \geq C_1\, \mathrm{Im}(\lambda_{1,n})
\qquad \forall n\in\mathbb{N}^*\,\mid\, n\geq n_0
$$
and
$$
\vert\lambda_{1,n}-\lambda_{1,n'}\vert\geq C_2\vert n^\alpha-{n'}^\alpha\vert
\qquad \forall n,n'\in\mathbb{N}^*\,\mid\, n,n'\geq n_0 .
$$
Let  $(\lambda_{2,m})_{m\in\mathbb{Z}}$ be another sequence of distinct complex numbers. We assume that there exist $m_0\in\mathbb{N}$, $\gamma>0$, $z_0\in\mathbb{C}$ and $(\mu_m)_{m\in\mathbb{Z}}\in\ell^2(\mathbb{Z})$ such that 
$$
\lambda_{2,m} = \gamma\, i\,m + z_0 + \mu_k
\qquad \forall m\in\mathbb{Z} \,\mid\,\vert m\vert\geq m_0 .
$$
Then, for any $T>\frac{2\pi}{\gamma}$, there exists $C_T>0$ (depending only on $T$) such that
\begin{equation}\label{IM}
\int_0^T \Big\vert \sum_{n\in\mathbb{N}^*} a_n e^{\lambda_{1,n}t} + \sum_{m\in\mathbb{Z}} b_m e^{\lambda_{2,n}t} \Big\vert^2 \, \mathrm{d}t 
\geq  C_T \bigg( \sum_{n\in\mathbb{N}^*} \vert a_n\vert^2 e^{2\lambda_{1,n}T} + \sum_{m\in\mathbb{Z}} \vert b_m\vert^2 \bigg)
\end{equation}
for all sequences $(a_n)_{n\in\mathbb{N}^*}, (b_m)_{m\in\mathbb{Z}}\in\ell^2(\mathbb{Z})$.
\end{theorem}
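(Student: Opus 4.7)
My plan is to prove \eqref{IM} by the moment method, constructing a biorthogonal family $\{q_{1,n}\}_{n\in\mathbb{N}^*} \cup \{q_{2,m}\}_{m\in\mathbb{Z}}$ in $L^2(0,T)$ to the combined exponential family $\{e^{\lambda_{1,n}t}\}_{n\in\mathbb{N}^*} \cup \{e^{\lambda_{2,m}t}\}_{m\in\mathbb{Z}}$, with the uniform norm estimates
\[
\|q_{1,n}\|_{L^2(0,T)} \leq C\, e^{-\operatorname{Re}(\lambda_{1,n})T}, \qquad \|q_{2,m}\|_{L^2(0,T)} \leq C .
\]
Once these are in hand, recovering the coefficients through $a_n = \int_0^T f\,\overline{q_{1,n}}\,\mathrm{d}t$ and $b_m = \int_0^T f\,\overline{q_{2,m}}\,\mathrm{d}t$ and invoking the standard duality between biorthogonality with such norm bounds and the Riesz-sequence property of the normalized exponential family $\{e^{\lambda_{1,n}(t-T)}\}_n\cup\{e^{\lambda_{2,m}t}\}_m$ yields the desired lower bound.

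\textbf{The two spectra treated separately.} The sequence $(\lambda_{2,m})_{m\in\mathbb{Z}}$ is an $\ell^2$-perturbation of the arithmetic progression $\gamma i m + z_0$; Kadec's $1/4$-theorem together with a finite-rank correction (to absorb the possibly large $\mu_m$ for $|m|<m_0$) implies that $\{e^{\lambda_{2,m}t}\}_{m\in\mathbb{Z}}$ is a Riesz sequence in $L^2(0,T)$ for every $T>2\pi/\gamma$, which immediately gives the pure Ingham inequality $\int_0^T|\sum_m b_m e^{\lambda_{2,m}t}|^2\,\mathrm{d}t\geq C\sum_m|b_m|^2$ and supplies a family of biorthogonal functions of uniformly bounded $L^2$-norm. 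On the parabolic side, the hypotheses imply $|\lambda_{1,n}|\sim n^\alpha$ with $\alpha>1$, hence the M\"untz summability $\sum_n 1/|\lambda_{1,n}|<\infty$. Combined with the sector and gap conditions, the Fattorini--Russell moment construction applies: the Hadamard canonical product $E_1(z)=\prod_n(1-z/\lambda_{1,n})$ defines an entire function of order $1/\alpha<1$, and Paley--Wiener/Laplace inversion produces biorthogonal functions $\widetilde q_{1,n}\in L^2(0,T)$ to $\{e^{\lambda_{1,n}t}\}_n$ satisfying $\|\widetilde q_{1,n}\|_{L^2}\leq C\,e^{-\operatorname{Re}(\lambda_{1,n})T}$, the gap condition being used to bound $|E_1'(\lambda_{1,n})|$ from below.

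\textbf{Combination---the main obstacle.} The functions $\widetilde q_{1,n}$ and $\widetilde q_{2,m}$ obtained above are each biorthogonal only to their own family, and the core difficulty is upgrading them to biorthogonality with the full combined system without destroying the norm bounds. The guiding observation is the strong spectral separation: the parabolic exponents escape to infinity in a sector strictly inside the left half-plane, while the hyperbolic ones accumulate in a thin vertical strip near $\operatorname{Re}(z)=\operatorname{Re}(z_0)$. The explicit route is to tensorize the two constructions at the entire-function level: for each $n_0\in\mathbb{N}^*$, set
\[
F_{n_0}(z) = \frac{E_1(z)\,E_2(z)}{(z-\lambda_{1,n_0})\,E_1'(\lambda_{1,n_0})\,E_2(\lambda_{1,n_0})},
\]
where $E_2$ is a sine-type entire function with simple zeros exactly at the $\lambda_{2,m}$, and recover $q_{1,n_0}$ as its Laplace--Paley--Wiener preimage in $L^2(0,T)$; a symmetric formula yields $q_{2,m_0}$. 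The crucial technical step, where all the quantitative hypotheses enter, is to show that the spectral separation forces $|E_2(\lambda_{1,n_0})|$ and $|E_1(\lambda_{2,m_0})|$ to be bounded below uniformly (up to tame polynomial factors), so that the Paley--Wiener estimates deliver the claimed $L^2$-norm bounds on $q_{1,n_0}$ and $q_{2,m_0}$. Equivalently, one checks that the closed subspaces $\overline{\operatorname{span}}\{e^{\lambda_{1,n}t}\}$ and $\overline{\operatorname{span}}\{e^{\lambda_{2,m}t}\}$ of $L^2(0,T)$ meet at a uniformly nonzero angle, whereupon the combined biorthogonal family is obtained by an oblique projection of controlled norm; it is in this angle estimate that the sharp threshold $T>2\pi/\gamma$ enters, and this is the step I expect to be the most delicate.
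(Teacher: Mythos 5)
The paper does not actually prove Theorem~\ref{thm_IM}: it attributes the inequality to \cite{zhang2003polynomial}, \cite[Lemma 4.1]{zhang2004polynomial} and to the later generalizations \cite{komornik2015ingham,bhandari2024boundary}, so there is no in-paper argument to compare yours against. Judged on its own terms, your proposal identifies a viable and fairly standard strategy (moment method, biorthogonal families, Ingham/Kadec for the hyperbolic block, Fattorini--Russell for the M\"untz block, then a positive-angle or product-of-generating-functions argument to combine them), and your reformulation of the combination step as a uniform positive angle between $\overline{\operatorname{span}}\{e^{\lambda_{1,n}t}\}$ and $\overline{\operatorname{span}}\{e^{\lambda_{2,m}t}\}$ in $L^2(0,T)$ is indeed equivalent to the theorem once the two separate inequalities are known. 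But that combination step is the \emph{entire} content of the theorem --- the separate Ingham and M\"untz inequalities are classical --- and you explicitly leave it unproven (``this is the step I expect to be the most delicate''). You never establish the lower bounds on $\vert E_2(\lambda_{1,n})\vert$ and $\vert E_1(\lambda_{2,m})\vert$, nor do you show where the hypotheses (the sector condition, the gap condition $\vert\lambda_{1,n}-\lambda_{1,n'}\vert\geq C_2\vert n^\alpha-{n'}^\alpha\vert$, the threshold $T>2\pi/\gamma$) actually enter those bounds. As it stands this is a proof plan, not a proof.

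Two further points need repair even within the plan. First, the norm bound $\Vert q_{1,n}\Vert_{L^2(0,T)}\leq C\,e^{-\operatorname{Re}(\lambda_{1,n})T}$ is not sufficient: from $\vert a_n\vert\leq\Vert f\Vert\,\Vert q_{1,n}\Vert$ you only get $\vert a_n\vert^2 e^{2\operatorname{Re}(\lambda_{1,n})T}\leq C^2\Vert f\Vert^2$ term by term, and summing over $n$ diverges. You need the summable estimate $\sum_n\Vert q_{1,n}\Vert^2\,e^{2\operatorname{Re}(\lambda_{1,n})T}<\infty$, which the Fattorini--Russell construction does deliver (via $\Vert q_{1,n}\Vert\leq C_\varepsilon e^{\varepsilon\vert\lambda_{1,n}\vert}$ for small $\varepsilon$), but which you should state, since it is what makes the duality argument close. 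Second, the ``Riesz-sequence property of the normalized exponential family'' you invoke is false for the parabolic block: the upper (Bessel) bound fails badly for $\{e^{\lambda_{1,n}(t-T)}\}$, and only the lower, minimality-type inequality survives --- which is exactly the inequality to be proved, so invoking it there is circular. If you want to complete the argument, the cleanest route is the one in the cited references: prove the uniform angle estimate directly (or equivalently the lower bounds on the cross-evaluations of the generating functions), using that the parabolic exponents escape to infinity inside a sector of the left half-plane while $E_2$ is of sine type with zeros confined to a vertical strip.
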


\smallskip

The inequality \eqref{IM} combines the celebrated Ingham inequality \cite{Ingham} dealing with purely imaginary (hyperbolic) spectrum having a spectral gap and the M{\"u}ntz-Sz\'asz theorem \cite{avdoninivanov} that tackles the parabolic spectrum.
It was first discovered and established in \cite{{zhang2003polynomial}}, \cite[Lemma 4.1]{zhang2004polynomial}, which are the papers that have inspired the present article, as mentioned at the beginning. The Ingham-M{\"u}ntz inequality has then been generalized in \cite[Theorem 1.1]{komornik2015ingham} and more recently in \cite[Proposition 1.7]{bhandari2024boundary} (in which it is referred to as a combined Ingham-type inequality). 

Note that, by the assumptions done on the sequence $(\lambda_{1,n})_{n\in\mathbb{N}^*}$, the term at the right-hand side of \eqref{IM} can be replaced by $C_T \big( \sum_{n\in\mathbb{N}^*} \vert a_n\vert^2 e^{-2n^\alpha T} + \sum_{m\in\mathbb{Z}} \vert b_m\vert^2 \big)$ (maybe, with a different constant $C_T$).
This inequality is ``almost optimal" with respect to the power $\alpha$ in the weight $e^{-2n^\alpha T}$ of $\vert a_n\vert^2$, and is ``optimal" with respect to the weight $1$ of $\vert b_m\vert^2$, in the following sense.


First, taking all $a_n=0$ in \eqref{IM}, we recover the Ingham inequality \cite{Ingham}, which is well known to be optimal in the above sense; actually the converse inequality holds true (think of the Parseval theorem for instance). Hence, in \eqref{IM}, the weight $1$ of $\vert b_m\vert^2$ is optimal. For example, the inequality fails with the weight $1/m$.

Second, taking all $b_m=0$ in \eqref{IM}, we recover an inequality resulting from the M{\"u}ntz-Sz\'asz theorem \cite{avdoninivanov}, which can be improved: one can replace $\alpha$ by $\alpha/2$, i.e., take the weight $e^{-\mathrm{Cst}\,n^{\alpha/2}\sqrt{1+T}}$ (or equivalently, $e^{-\mathrm{Cst}\sqrt{-\lambda_{1,n}}\sqrt{1+T}}$) for $\vert a_n\vert^2$. This fact was used in the past to establish exact null controllability properties for the 1D heat equation (see \cite[Section 7]{russell1978controllability}). More generally, the observability inequality for the multi-D Dirichlet heat equation on a domain $\Omega$ with internal observation on $\omega\subset\Omega$, which holds for any $T>0$, is usually written as $\int_0^T \Vert\mathds{1}_\omega e^{t\triangle}f\Vert_{L^2(\Omega)}^2\, dt \geq C\Vert e^{T\triangle}f\Vert_{L^2(\Omega)}^2$ for any $f\in L^2(\Omega)$, for some $C>0$ not depending on $f$. But actually, as noticed in \cite[Section 6]{fernandezcarazuazua}, this inequality can be improved by replacing, at the right-hand side, $\Vert e^{T\triangle}f\Vert$ by $\Vert e^{-C\sqrt{1+T}\sqrt{-\triangle}}f\Vert$, and then one obtains an ``optimal" inequality in the above sense, i.e., the power $1/2$ of $-\triangle$ cannot be decreased: this is known by the so-called Lebeau-Robbiano spectral inequalities (see \cite[Theorem 3]{lebeauzuazua}).

According to the above remarks, it is natural to wonder whether the complete Ingham-M\"untz inequality \eqref{IM} remains valid with the weight $e^{-2n^\alpha T}$ replaced by $e^{-\mathrm{Cst}\, n^{\alpha/2}\sqrt{1+T}}$. This fact is not known. But at least, according to the above discussion, the inequality \eqref{IM} fails if one replaces the weight $e^{-2n^\alpha T}$ replaced by $e^{-\mathrm{Cst}\, n^{c\alpha}}$ with $c<1/2$.
This is in this sense that we have written that the Ingham-M\"untz inequality \eqref{IM} is ``almost optimal".

\paragraph{Acknowlegment.}
The third author acknowledges the support of ANR-20-CE40-0009 (TRECOS).


\bibliographystyle{IEEEtranS}
\nocite{*}
\bibliography{IEEEabrv,mybibfile}

\end{document}